\newcommand{\mylabel}[2]{#2\def\@currentlabel{#2}\label{#1}}
\newlength\circlesize
\newcommand{\longsquiggly}{\xymatrix{{}\ar@{~>}[r]&{}}}
\newcommand{\SSYT}{\mathrm{SSYT}}
\newcommand{\ShST}{\mathrm{ShST}}
\newcommand{\height}{\mathrm{ht}}
\newcommand{\B}{\mathcal{B}}
\newcommand{\wt}{\mathrm{wt}}
\newtheorem{lemma}{Lemma}
\newtheorem{theorem}[lemma]{Theorem}
\newtheorem{proposition}[lemma]{Proposition}
\theoremstyle{definition}
\newtheorem{example}[lemma]{Example}
\newtheorem{definition}[lemma]{Definition}
\newtheorem{remark}[lemma]{Remark}
\numberwithin{equation}{section}
\numberwithin{figure}{section}
\numberwithin{table}{section}
\numberwithin{lemma}{section}
\definecolor{DarkBlue}{rgb}{0, 0.1, 0.55}
\definecolor{DarkRed}{rgb}{0.45, 0, 0}
\newcommand{\defn}[1]{\textbf{#1}}
\newcommand{\east}[1]{\ensuremath{\xrightarrow{\ #1\ }}}
\newcommand{\west}[1]{\ensuremath{\xleftarrow{\ #1\ }}}
\newcommand{\north}[1]{\ensuremath{\big \uparrow \!\text{\raisebox{.1ex}{\scriptsize $#1$}}}}
\newcommand{\south}[1]{\ensuremath{\big \downarrow \!\text{\raisebox{.1ex}{\scriptsize $#1$}}}}
\newcommand{\stepnorth}[2]{\vector(0,1){.92}\put(-.23,.4){\scriptsize$#1$}\put(0,1){#2}}
\newcommand{\stepnorthA}[2]{\vector(0,1){.92}\put(.05,.4){\scriptsize$#1$}\put(0,1){#2}}
\newcommand{\stepsouthshiftW}[2]{\put(-.08,0){\vector(0,-1){.92}\put(-.23,-.6){\scriptsize$#1$}}\put(0,-1){#2}}
\newcommand{\stepeast}[2]{\vector(1,0){.92}\put(.4,-.25){\scriptsize$#1$}\put(1,0){#2}}
\newcommand{\stepeastA}[2]{\vector(1,0){.92}\put(.4,.05){\scriptsize$#1$}\put(1,0){#2}}
\newcommand{\stepwestshiftS}[2]{\put(0,-.08){\vector(-1,0){.92}\put(-.5,-.25){\scriptsize$#1$}}\put(-1,0){#2}}
\newcommand{\std}{\mathrm{std}}
\newcommand{\veps}{\varepsilon}
\newcommand{\vphi}{\varphi}
\newcommand{\vepshat}{\widehat{\varepsilon}}
\newcommand{\vphihat}{\widehat{\varphi}}
\title{Axioms for shifted tableau crystals}
\author{Maria Gillespie\thanks{Partially supported by NSF grant PDRF 1604262.} \\ \texttt{mgillespie@math.ucdavis.edu} 
\and
Jake Levinson\thanks{Partially supported by NSERC grant PDF-502633.} \\
\texttt{jlev@math.uw.edu}
}
\begin{document}
\maketitle

\begin{abstract}
  We give local axioms that uniquely characterize the crystal-like structure on shifted tableaux developed in \cite{GLP}.  These axioms closely resemble those developed by Stembridge \cite{Stembridge} for type A tableau crystals.  This axiomatic characterization gives rise to a new method for proving and understanding Schur $Q$-positive expansions in symmetric function theory, just as the Stembridge axiomatic structure provides for ordinary Schur positivity.
\end{abstract}

\section{Introduction}\label{sec:intro}

  Crystal bases were first introduced by Kashiwara \cite{Kashiwara} in the context of the representation theory of the quantized universal enveloping algebra $U_q(\mathfrak{g})$ of a Lie algebra $\mathfrak{g}$ at $q=0$.  Since then, their connections to tableau combinatorics, symmetric function theory, and other parts of representation theory have made crystal operators and crystal bases the subject of much recent study.  (See \cite{Schilling} for an excellent recent overview of crystal bases.) Crystals have also made appearances in geometry, in particular in the structure of the unipotent geometric crystals defined by Berenstein and Kazhdan (see \cite{Berenstein}, \cite{Berenstein2}).

  In type A, crystal bases are well-understood in terms of semistandard Young tableaux. There are combinatorial operators $E_i$ and $F_i$ that respectively raise and lower the weight of a tableau, by changing an $i+1$ to an $i$ or vice versa. These operators are well-understood explicitly in terms of modifying the reading word of the tableau; see \cite{Kashiwara} or \cite{Schilling} for these definitions.

  Moreover, Stembridge \cite{Stembridge} characterized the intrinsic combinatorial structure of these crystals in terms of local axioms, viewing $\B = \SSYT(\lambda,n)$ as a graph with edges $\xrightarrow{F_i}$, as follows.
\begin{theorem}[Stembridge] \label{thm:stembridge}
Let $G$ be a directed graph with vertices weighted by $\mathbb{Z}^n$ and edges labeled $1, \ldots, n{-}1$. Suppose $G$ satisfies the local axioms (K1-K2), (S1-S3) described in section \ref{sec:background}.

Then each connected component $C$ of $G$ has a unique maximal element $g^*$, with $\lambda = \wt(g^*)$ a partition, and there is a canonical isomorphism $C \cong \SSYT(\lambda,n)$ of weighted, edge-labeled graphs.
\end{theorem}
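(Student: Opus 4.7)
The plan is to establish the theorem in three stages: (1) show that every connected component $C$ has a unique source vertex $g^*$ whose weight is a partition $\lambda$; (2) show that the local axioms, together with the data of $\lambda$, completely determine $C$ as a weighted edge-labeled graph; and (3) verify that $\SSYT(\lambda,n)$ with its standard crystal structure satisfies these axioms and hence provides the required isomorphic target.

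For step (1), I would first argue that any directed path in $C$ terminates: axiom (K1), the partial-inverse relation between $E_i$ and $F_i$, rules out directed cycles, and along an $F_i$-edge the weight changes by $e_i - e_{i+1}$, so a dominance-order argument forces termination. Hence each connected component has at least one source $g^*$. Uniqueness of the source should then follow by using axioms (K2) and (S1--S3) to reconstruct the $\veps_i, \vphi_i$ statistics from purely local data, and showing that two hypothetical sources cannot be joined in the graph without contradicting these statistics. The source condition $\veps_i(g^*)=0$ for all $i$ then forces $\lambda_i \ge \lambda_{i+1}$, so $\lambda = \wt(g^*)$ is a partition.

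For step (2), I would proceed by a double induction: an outer induction on the rank $n$ and an inner induction on the distance in $C$ from $g^*$. Restricting to the subgraph generated by edges $F_1, \ldots, F_{n-2}$, the outer inductive hypothesis identifies each rank-$(n{-}1)$ sub-component with $\SSYT(\mu, n{-}1)$ for some partition $\mu$. The axioms, and in particular (S3), then describe how the $F_{n-1}$-arrows must link these sub-components, and the uniqueness of this linking pattern — which must agree with the one on SSYT — yields the isomorphism vertex by vertex. The base case $n=2$ reduces to verifying that each component is a single $F_1$-string of length $\lambda_1 - \lambda_2$.

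The main obstacle will be step (2), specifically verifying that the axioms rigidly determine how distinct $F_i$-strings interleave, without producing extra vertices or omitting edges relative to the SSYT model. The delicate part is handling the two cases of axiom (S3) — the commuting diamond versus the longer-interaction case — uniformly within the induction, ensuring that the partial matching produced at each step extends consistently as one moves further from the source. Step (3) is then relatively routine: one verifies directly on $\SSYT(\lambda,n)$ that the Kashiwara operators defined via the standard signature (bracket-matching) rule satisfy (K1--K2) and (S1--S3), producing the concrete target graph in which the isomorphism lands.
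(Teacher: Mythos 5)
Your three-stage skeleton (unique source with partition weight, rigidity given the weight, verification that $\SSYT(\lambda,n)$ satisfies the axioms) matches the outline the paper attributes to Stembridge, and steps (1) and (3) are aimed at the right targets. But the two load-bearing steps are missing their key ideas. For uniqueness of the source, ``reconstructing $\varepsilon_i,\varphi_i$ from local data and showing two sources cannot be joined without contradicting these statistics'' is not a mechanism: the actual argument takes a vertex $w$, maximal among those lying below two distinct highest-weight elements, with two in-edges $x\to w\leftarrow y$, and uses the string axioms, (S1), and crucially the \emph{dual} merge axiom (S3$^\ast$) to produce a common upper bound of $x$ and $y$, contradicting maximality. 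You never invoke (S3$^\ast$); with only the downward axiom (S3) you cannot merge upward, and this is exactly where the work lies (compare the paper's Proposition \ref{prop:unique-maximum} in the shifted setting, which is the same scheme).

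For step (2), your rank-induction/branching strategy is genuinely different from Stembridge's, and as stated it has a real gap: after identifying each $\{1,\dots,n-2\}$-component with some $\SSYT(\mu,n-1)$, the claim that (S3) ``describes how the $F_{n-1}$-arrows must link these sub-components'' is an assertion of precisely what the theorem says. Axiom (S3) only constrains the interaction of $F_{n-1}$ with $F_{n-2}$ at a single vertex (and (S1) gives commutation with far labels); nothing in your plan shows this local data propagates to a globally unique attachment of the $F_{n-1}$-strings, and you acknowledge this is the obstacle without supplying the idea that resolves it. The proof the paper has in mind avoids branching altogether: one fixes the highest weight and runs a single induction on the distance $r$ from $g^*$, comparing the given component with a reference graph (e.g.\ $\SSYT(\lambda,n)$) and showing at each level that whenever two downward edges from level $r-1$ meet at a common vertex in one graph, the corresponding edges meet in the other --- completing squares or the $f_if_j^2f_i=f_jf_i^2f_j$ octagons using (S2), (S3), and (S3$^\ast$), and propagating the string-length statistics so the induction closes (this is the scheme carried out in the paper's Proposition \ref{prop:uniquely-determined} for the shifted analogue). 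Either rewrite step (2) along these lines, or supply an actual argument that the branching data plus (S1)--(S3), (S3$^\ast$) determines the $F_{n-1}$-edges uniquely; at present that step, and the omission of the dual axiom throughout, are genuine gaps.
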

This result gives a method to show that a symmetric function is Schur-positive: one introduces operators $e_i, f_i$ on the underlying set, satisfying the local axioms. Theorem \ref{thm:stembridge} then implies that the weight generating function is Schur-positive, and enumerating the highest weight elements of the connected components gives the coefficients in the Schur expansion. This method has recently been applied successfully by Morse and Schilling \cite{Morse-Schilling} to certain affine Stanley symmetric functions.

In this paper, we give analogous local axioms for the crystal-like structure introduced in \cite{GLP} on shifted semistandard tableaux. We write $\B = \ShST(\lambda/\mu,n)$ for the set of shifted semistandard tableaux of shifted skew shape $\lambda/\mu$ and entries $\leq n$. The raising and lowering operators defined in \cite{GLP} were introduced to answer geometric questions involving the cohomology of the odd orthogonal Grassmannian $H^*(OG(n,V))$, where $V$ is a $(2n+1)$-dimensional complex vector space with a nondegenerate symmetric form \cite{GLP-geometry}. Combinatorially, these operators are {\bf coplactic}, that is, their action is invariant under shifted jeu de taquin slides, and this property essentially determines their action on \textit{all} shifted skew semistandard tableaux (see Sections \ref{sec:background} and \ref{sec:tableaux} for additional discussion).

Interestingly, although the crystals' combinatorial and enumerative properties describe type B Schubert calculus, the crystal itself resembles a `doubled' type A crystal, with two sets of operators $e_i, f_i$ and $e_i', f_i'$. Accordingly, our axioms resemble a `doubled' form of the Stembridge axioms.

\begin{remark} In type B, there are two related classes of tableaux, `$P$-tableaux' and `$Q$-tableaux', enumerated respectively by the Schur $P$- and $Q$-functions. A crystal structure on $P$-tableaux, corresponding to the representation theory of the quantum queer superalgebra $\mathfrak{q}(n)$, was introduced in \cite{GJKKK}, and the combinatorics and structure of these crystals were further studied in \cite{Assaf}, \cite{GJKKK2}, and \cite{Hiroshima}.  In \cite{ChoiKwon}, Choi and Kwon use the structure to understand Schur $P$-positivity of certain skew Schur functions.

The crystals studied in this paper are on $Q$-tableaux and are nonisomorphic to the $\mathfrak{q}(n)$ crystals. The Schur $P$- and $Q$-functions, first defined by Schur \cite{Schur}, are dual classes of symmetric functions under the standard Hall inner product \cite{Macdonald}, and it would be interesting to understand whether these crystals are also in some sense `dual' to each other.
\end{remark}

Our main theorem is as follows.

\begin{theorem}\label{thm:axioms}
Let $G$ be a directed graph with vertices weighted by $\mathbb{Z}^n_{\geq 0}$ and edges labeled by $1', 1, \ldots, (n{-}1)',n{-}1$. Suppose $G$ satisfies the axioms stated in section \ref{sec:axioms} below.

Then every connected component $C$ of $G$ has a unique maximal element $g^*$, with $\sigma = \mathrm{wt}(g^*)$ a strict partition, and there is a canonical isomorphism $C \cong \ShST(\sigma,n)$.
\end{theorem}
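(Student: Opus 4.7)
The plan is to adapt Stembridge's proof of Theorem~\ref{thm:stembridge}, with modifications to handle the doubled (primed/unprimed) structure of edges. The first step is to verify that for every strict partition $\sigma$ the graph $\ShST(\sigma,n)$ equipped with the crystal operators of \cite{GLP} satisfies the axioms of Section~\ref{sec:axioms}. This both confirms the axioms are consistent and identifies the isomorphism type that each connected component of $G$ must have.

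Next, I would show that each connected component $C$ admits a unique maximal element $g^*$, namely one at which no raising operator is defined. Existence is immediate, since the weight lies in $\mathbb{Z}_{\geq 0}^n$ and raising strictly increases the sum of coordinates, so any chain of raising moves terminates. For uniqueness, given two candidate maxima one connects them by a path in $C$ and uses the local axioms to rearrange the path so that all raising moves precede all lowering moves; the axioms force these rearrangements to close up, leaving at most one maximum per component. I would then identify $\sigma = \wt(g^*)$ as a strict partition by using the axiomatic relationship between the string lengths $\varepsilon_i,\varphi_i$ (and their primed counterparts) and the coordinates of $\wt(g^*)$: being simultaneously highest weight for both $e_i$ and $e_i'$ forces $\sigma_i > \sigma_{i+1}$ whenever $\sigma_{i+1} > 0$.

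Finally, I would build the canonical isomorphism $\Phi\colon C \to \ShST(\sigma,n)$ by sending $g^*$ to the highest-weight tableau of shape $\sigma$ and extending via lowering operators, $\Phi(f_{i_1}\cdots f_{i_k}\, g^*) = F_{i_1}\cdots F_{i_k}\, \Phi(g^*)$, with primed edges treated analogously. The main obstacle is well-definedness: if two words in the lowering operators produce the same element of $C$, they must produce the same tableau on the target side. Following Stembridge's strategy, this reduces to deducing from the axioms every local commutation and braid-type identity satisfied by the $f_i$ and $f_i'$. The genuinely new step with no type-A analogue is handling the interaction between a primed and an unprimed operator at the same or adjacent indices; here the doubled axioms of Section~\ref{sec:axioms} must carry significant weight that the original Stembridge axioms did not.
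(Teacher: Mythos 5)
Your overall architecture is the paper's: verify that $\ShST(\sigma,n)$ satisfies the axioms, show each component has a unique highest weight element whose weight is a strict partition, and show the component is determined by that weight by working down from the top (Propositions \ref{prop:unique-maximum} and \ref{prop:uniquely-determined}). Two issues, one small and one substantive. The small one: your termination argument for the existence of a maximal element is based on a false claim, since by \ref{ax:kashiwara1}/\ref{ax:Kashiwara-B} a raising move adds $\alpha_i=(0,\ldots,1,-1,\ldots,0)$ and so \emph{preserves} the coordinate sum; termination instead follows from finiteness of $G$ together with the strict increase of weight in dominance order (or of a functional such as $\sum_j (n-j)\wt_j$). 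This is easily repaired.

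The substantive gap is in your well-definedness step. Reducing it to ``deducing from the axioms every local commutation and braid-type identity satisfied by the $f_i$ and $f_i'$'' presupposes that every pair of lowering edges out of a common vertex is governed by an axiom supplying such an identity, and in this doubled setting that is simply not true. As Remark \ref{rmk:excluded-lengths} points out, no axiom constrains the pair $\{f_i, f_{i+1}'\}$ when $\vepshat_i(w)=0$ (the distance to a common lower bound can be arbitrarily long), and in the $\Delta=(0,0)$ situation where only the half-primed octagon \ref{ax:half-primed-stembridge} (not \ref{ax:stembridge}) applies, the relation one would want, of the shape $f_2f_1^2f_2'(z)=f_1f_2'f_2f_1(z)$, is not an axiom either. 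So the identities you need cannot all be ``derived from the axioms'' by local manipulations, and a proof following your plan stalls exactly at these configurations. The paper closes the first hole with the Excluded Lengths Lemma (Lemma \ref{lem:excluded-lengths}), a genuinely nontrivial argument (infinite regress plus a chain of $\{f_1,f_2\}$ squares) showing that the uncovered length pattern never occurs at a merge vertex; and it handles the second in Step 6 of Proposition \ref{prop:uniquely-determined} not by an identity at all, but by transferring information back and forth between $G$ and $G'$ through the partially built isomorphism (together with the dichotomy of Proposition \ref{prop:some-stembridge-applies}). Relatedly, the induction must carry the preservation of the refined statistics $\vepshat_i,\veps_i',\vphihat_i,\vphi_i'$, since the hypotheses of the merge axioms are stated in terms of them. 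Your outline should therefore replace the ``all relations'' reduction by the paper's level-by-level comparison of two axiom-satisfying graphs with the same highest weight, supplemented by these two non-axiomatic ingredients.
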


This paper is structured as follows. In Section \ref{sec:background}, we recall the notions of Kashiwara crystals and the Stembridge axioms, and also recall the definitions of the coplactic operators on shifted tableaux defined in \cite{GLP}. In Section \ref{sec:axioms}, we state our axioms. In Section \ref{sec:proof-of-uniqueness} we prove a `uniqueness' statement, that the graphs satisfying our axioms are characterized by their highest-weight elements. In Section \ref{sec:tableaux} we recall the crystal operators on shifted tableaux from \cite{GLP} and show that they obey the given axioms, completing the proof of Theorem \ref{thm:axioms}.

\section{Background}\label{sec:background}

We first recall some basic combinatorial definitions. A \textbf{skew shape} $\lambda/\mu$ is the difference of the Young diagrams of two partitions $\lambda$ and $\mu$, where $\mu_i\le \lambda_i$ for all $i$.   A \textbf{semistandard Young tableau} of skew shape $\lambda/\mu$ is a filling of the Young diagram of $\lambda$ with positive integers in such a way that the entries are weakly increasing across rows and strictly increasing down columns (see Figure \ref{fig:ssyt}).  
\begin{figure}
\begin{center}
\includegraphics{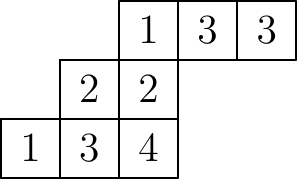}
\end{center}
\caption{\label{fig:ssyt}A semistandard Young tableau of shape $(5,3,3)/(2,1)$.}
\end{figure}

The \textbf{row reading word} of a semistandard Young tableau is obtained by reading its rows from bottom to top.  In Figure \ref{fig:ssyt}, the row reading word is $13422133$.

\subsection{Type A tableau crystals}

Let $\B_{\lambda/\mu} = \SSYT(\lambda/\mu,n)$ be the set of all semistandard Young tableaux of a given skew shape $\lambda/\mu$ (which may be a straight shape if $\mu=\emptyset$) and with entries from $\{1,\ldots,n\}$ for some fixed $n$.  If $T\in \B_{\lambda/\mu}$, let $w$ be its row reading word.
  
  The crystal operators $E_i,F_i:\B_{\lambda/\mu}\to \B_{\lambda/\mu}\cup\{\varnothing\}$ are defined directly in terms of the reading word $w$, as follows. First replace each $i$ in $w$ with a right parentheses and each $i+1$ in $w$ with a left parentheses. For instance, if $w=112212112$ and $i=1$, the sequence of brackets is $))(()())($.  After maximally pairing off the parentheses, $E_i(T)$ is formed by changing the first unpaired $i+1$ to $i$, and $F_i(T)$ is formed by changing the last unpaired $i$ to $i+1$ (they are defined to be $\varnothing$ if the operation is impossible.)
  \[\varnothing\ \xleftarrow{\ E\ }\ \ ))\underline{(()())})\ \ \xleftarrow{\ E\ }\ w = ))\underline{(()())}(\ \ \xrightarrow{\ F\ }\ \ )(\underline{(()())}(\ \ \xrightarrow{\ F\ }\ \ ((\underline{(()())}(\ \ \xrightarrow{\ F\ }\ \varnothing
  \]
  The functions $\varphi_i(T)$ and $\varepsilon_i(T)$ can be defined as the smallest $k$ for which $F_i^k(T)=\varnothing$ or $E_i^k(T)=\varnothing$ respectively, and the weight function $\wt(T)$ is simply the weight vector $(m_j)$ where $m_j$ is the number of $j$'s that occur in $T$.  Then the tuple $(\B_{\lambda/\mu}, E_i,F_i,\varphi_i,\varepsilon_i,\wt)$ gives a type A crystal.  When $\lambda/\mu=\lambda$ is a straight shape, it is precisely the connected crystal corresponding to the irreducible representation of $U_q(\mathfrak{gl}_n)$ indexed by $\lambda$.

\subsection{Type A Stembridge axioms}

Recall that a (finite) {\bf Kashiwara crystal for $\mathrm{GL}_n$} is a set $B$ together with partial operators $e_i,f_i$ on $B$, length functions $\varepsilon_i,\varphi_i:B\to \mathbb{Z}$ for $1 \leq i \leq n-1$, and weight function
$\mathrm{wt}:B\to \mathbb{Z}^n$, such that:
\begin{enumerate}
\item[\mylabel{ax:kashiwara1}{(K1)}]  The operators $e_i, f_i$ are partial inverses, and if $Y = e_i(X)$, then
\[
(\varepsilon_i(Y), \varphi_i(Y))=(\varepsilon_i(X)-1, \varphi_i(X)+1) \ \ \text{ and } \ \
\mathrm{wt}(Y) = \mathrm{wt}(X)+\alpha_i,
\]
where $\alpha_i = (0,\ldots,1,-1,\ldots,0)$ is the weight vector with $1,-1$ in positions $i,i{+}1$.

\item[\mylabel{ax:kashiwara2}{(K2)}] For any $i\in \{1,\ldots,n-1\}$ and any $X\in B$, we have $\varphi_i(X)=\langle \mathrm{wt}(X),\alpha_i\rangle+\varepsilon_i(X)$.
\end{enumerate}

We frequently view $B$ as a directed graph, with edges $\xrightarrow{\ i\ }$ corresponding to $f_i$. For each $i$, we draw the $i$-connected components as `strings':
\[\bullet \xrightarrow{\ i\ } \cdots \xrightarrow{\ i\ } e_i(w) \xrightarrow{\ i\ } w \xrightarrow{\ i\ } f_i(w)\xrightarrow{\ i\ } \cdots \xrightarrow{\ i\ } \bullet\]

We say $B$ is a {\bf Stembridge crystal} if $B$ is a Kashiwara crystal satisfying the following additional axioms.

\begin{enumerate}
\item[\mylabel{ax:Stembridge-ij}{(S1)}] If $|i-j| > 1$ and $f_i(w), f_j(w)$ are defined, then $f_if_j(w)$ and $f_jf_i(w)$ are defined and equal; likewise for $e_i, e_j$.
\item[\mylabel{ax:Stembridge-lengths}{(S2)}] If $f_{i \pm 1}(w) = x$ is defined, then
\[(\varepsilon_i(w) - \varepsilon_i(x),\varphi_i(w) - \varphi_i(x)) = (0,-1) \text{ or } (1,0).\]
\end{enumerate}

\noindent Axiom (S1) states that $i$- and $j$-edges form `commuting squares':
\[\includegraphics{square}\]
It remains to describe the interactions between $i$- and $i{\pm}1$-edges. Axiom (S2) states that the $i$-string lengths can change in precisely two ways when following an edge $w \xrightarrow{i \pm 1} x$:
\begin{center}
\begin{tabular}{l}
\includegraphics[scale=0.8]{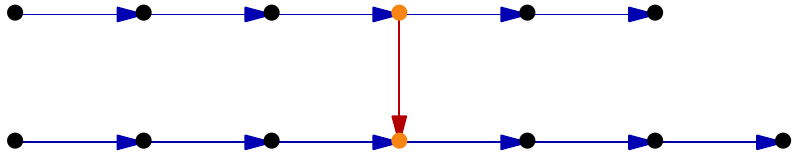} \hspace{1cm} \raisebox{0.5cm}{OR} \hspace{1cm}
\includegraphics[scale=0.8]{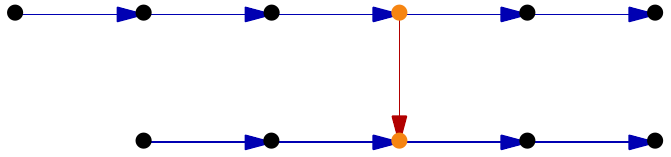}
\end{tabular}
\end{center}
That is, the $i$-string either lengthens by 1 at the low-weight end, or shortens by 1 at the high-weight end. These two possibilities control the explicit relation satisfied by $F_i$ and $F_{i \pm 1}$ in this case.

\begin{enumerate}
\item[\mylabel{ax:Stembridge-merge}{(S3)}] Suppose $f_i(w) = x$ and $f_j(w) = y$ are both defined, where $|i-j|=1$. Let
\[\Delta = (\varepsilon_j(w) - \varepsilon_j(x), \varepsilon_i(w) - \varepsilon_i(y))\]
So $\Delta = (1,1), (1,0), (0,1)$ or $(0,0)$. If $\Delta \ne (0,0)$, then $f_if_j(w) = f_jf_i(w) \ne \varnothing$. Otherwise, $f_if_j^2f_i(w) = f_jf_i^2f_j(w) \ne \varnothing$.
\end{enumerate}


Finally, there is a `dual' axiom to (S3):
\begin{enumerate}
\item[\mylabel{ax:Stembridge-dual-merge}{(S3$^\ast$)}] Suppose $e_i(w) = x$ and $e_j(w) = y$ are both defined, where $|i-j|=1$. Let
\[\Delta = (\varphi_j(w) - \varphi_j(x), \varphi_i(w) - \varphi_i(y))\]
If $\Delta \ne (0,0)$, then $e_ie_j(w) = e_je_i(w) \ne \varnothing$. Otherwise, $e_ie_j^2e_i(w) = e_je_i^2e_j(w) \ne \varnothing$.
\end{enumerate}

\begin{remark}
  The phrasings of Axioms \ref{ax:Stembridge-ij} through \ref{ax:Stembridge-dual-merge} above are slightly different from, but logically equivalent to, Stembridge's original statements in \cite{Stembridge}.  We organize them in this manner in order to draw parallels with our own axiomatic characterization below.
\end{remark}

The proof of Stembridge's Theorem \ref{thm:stembridge} proceeds as follows. Assuming $B$ satisfies these axioms, one shows first that $B$ has a unique element of highest weight. The structure of $B$ is then uniquely determined (working inductively from this maximal element).  The tableau crystals described above satisfy these axioms, and so the tableau crystal $\mathcal{B}_\lambda$ is the unique type A crystal having highest weight $\lambda$.

\subsection{Shifted tableau crystals}

The type A crystal operators are known to be compatible with \textit{jeu de taquin} slides. In fact, the operators are the \emph{unique} operators on skew semistandard tableaux that (i) raise and lower the weight of the tableau appropriately, and (ii) are \textbf{coplactic}, that is, they commute with all sequences of jeu de taquin slides. In \cite{GLP}, similar conditions were used to define raising and lowering operators on \textit{shifted tableaux}. 

We briefly recall the tableaux and operators of interest, but postpone detailed definitions until Section \ref{sec:tableaux}. We use the conventions of \cite{Sagan,Worley} for shifted skew shapes and shifted $Q$-tableaux.  In particular, a \textbf{strict partition} is a tuple $\lambda=(\lambda_1 > \ldots > \lambda_k)$, and its \textbf{shifted Young diagram} is the partial grid of squares in which the $i$-th row contains $\lambda_i$ boxes and is shifted to the right $i$ steps.  A \textbf{(shifted) skew shape} is a difference $\lambda/\mu$ of two partition diagrams $\lambda$ and $\mu$ (if $\mu$ is contained in $\lambda$). 

We write $\ShST(\lambda/\mu,n)$ for the set of skew, shifted \textbf{semistandard tableaux} of shape $\lambda/\mu$ on the alphabet $\{1'<1<2'<2<\cdots<n'<n \}$.  Here, a tableau is semistandard if the unprimed entries repeat only in rows, primed entries repeat only in columns, and for each $i$, the southwesternmost $i$ or $i'$ is unprimed. The {\bf weight} of such a tableau is the vector $(n_1, n_2, \ldots)$, where $n_i$ is the total number of entries $i$ and $i'$.
 An example is as follows:

  \begin{center}
$T =$ \raisebox{-.5\height}{\includegraphics{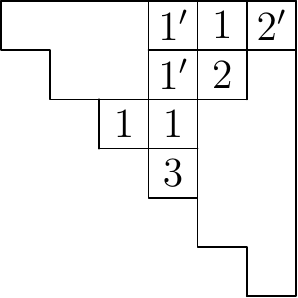}} \qquad $\wt(T) = (4,2,1)$.
  \end{center}

The \textbf{reading word} of a semistandard shifted tableau, as in the unshifted case, is the word formed by concatenating the rows from bottom to top (above, the reading word is $3111'21'12'$).  The reading word arranges the boxes in \textbf{reading order}.

For a rectified shape $\lambda = (a,b)$ with at most two rows, we arrange the tableaux of $\ShST(\lambda,2)$ by weight and define lowering operators as indicated in Figure \ref{fig:two-row-intro}. We then define $F_1, F_1'$ as the \emph{unique} coplactic operators (with respect to shifted jeu de taquin) on shifted skew tableaux that act as indicated on rectified tableaux. We define $F_i, F_i'$ analogously by treating the letters $i',i,i{+}1',i{+}1$ as $1',1,2',2$, and we define $E_i, E_i'$ as the partial inverse operators to $F_i, F_i'$.

\begin{figure}[h]
\begin{center}
 \includegraphics[width=.85\linewidth]{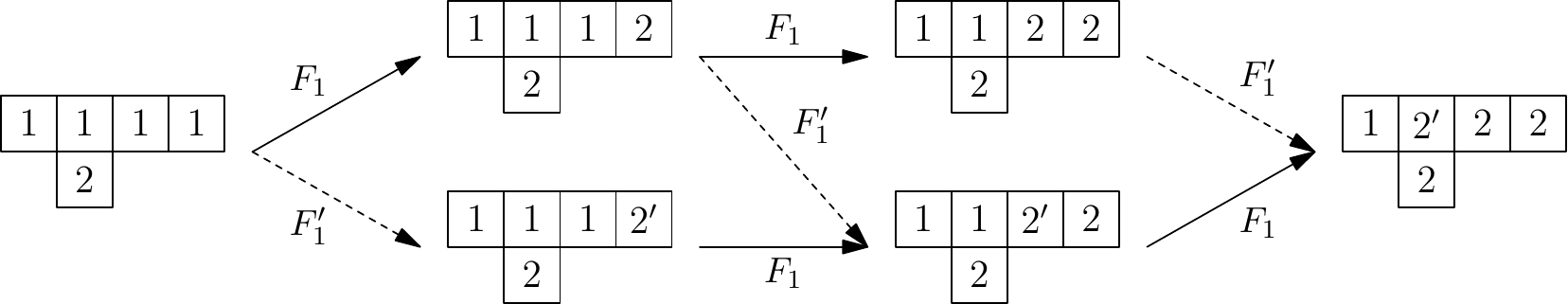} \vspace{1cm}
 
 \includegraphics[width=.85\linewidth]{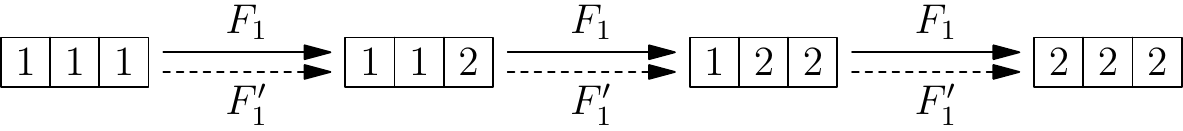}
\end{center}
 \caption{\label{fig:two-row-intro}The crystals on straight shape shifted tableaux with entries $1,1',2,2'$ are either `separated' or `collapsed' strings.  Reversing the arrows gives the partial inverses $E_1$ and $E_1'$.} 
\end{figure}

The main result of \cite{GLP} is to give a direct description of $F_i'$ and $F_i$, depending only on the reading word, on general skew shifted tableaux. We will recall these descriptions in Section \ref{sec:tableaux}, when we show that $F_i, F_i'$ satisfy the axioms stated in Section \ref{sec:axioms}.

\subsubsection{The involution $\eta$}
\label{subsec:eta}
There is a coplactic weight-reversing involution $\eta$ on $\ShST(\alpha,n)$ which reverses arrows in the coplactic structure:
\[\eta \circ F_i \circ \eta = E_{n-i} \text{ and } \eta \circ F_i' \circ \eta = E_{n-i}'\]
for each $i=1, \ldots, n-1$. If $\alpha$ is a straight shape, $\eta$ turns out to be the well-known evacuation involution defined by Sch\"{u}tzenberger \cite{Schutzenberger}, and in general it is the unique coplactic extension of evacuation.  

In our axioms below, we do not assume the graph $G$ has such an involution.  Instead we follow Stembridge's approach and state ``dualized'' forms of each axiom, obtained by reversing all arrows and indices. Thus a relation between $F_i$ and $F'_{i+1}$ has a companion relation between $E_{i+1}$ and $E'_i$ for all $i$.

\section{Axioms for shifted tableau crystals}\label{sec:axioms}

We now state a collection of local axioms governing the structure of the shifted tableau crystals, in the same vein as the Stembridge axioms for ordinary crystals. In this section, we will view a {\bf doubled crystal} as a directed graph $G$, with vertices weighted by $\mathbb{Z}^n$ and edges labeled $1', 1, \ldots, n{-}1', n{-}1$.  

\subsection{Basic structure axioms}

Throughout, $i\in \{1,2,\ldots,n-1\}$ is any valid edge label index, with $i'$ denoting the corresponding primed edge label. In all diagrams, an omitted edge label is either unprimed (if the edge is solid) or primed (if the edge is dashed).  

\begin{itemize}
\item[\mylabel{ax:basic1}{(B1)}] Each $\{i,i'\}$-connected component has one of the following two shapes (of any valid length):
\begin{center}
\raisebox{-.5\height}{\includegraphics[scale=1]{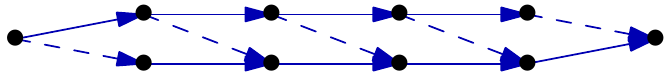}} \ OR \ 
\raisebox{-.5\height}{\includegraphics[scale=1]{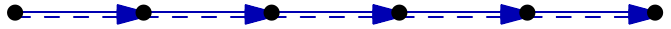}}
\end{center}
We call the type on the left, consisting of two solid (unprimed) chains of equal length connected by dashed (primed) edges, a \textbf{separated string}.  We call the type on the right a \textbf{collapsed string}.  The shortest possible separated string consists of two vertices, connected by only an $i'$ arrow.  A singleton node is considered a collapsed string.

Moreover, $v$ is the top (resp. bottom) of a collapsed string if and only if $\mathrm{wt}_{i{+}1}(v) = 0$ (resp. $\mathrm{wt}_i(v) = 0$). 

\item[\mylabel{ax:basic2}{(B2)}] If $|i-j| > 1$, all edges and reverse-edges (either primed or unprimed) commute. That is, given two edges $w \xrightarrow{a} x, w \xrightarrow{b} y$, there is a vertex $z$ with edges $x \xrightarrow{b} z, y \xrightarrow{a} z$, and conversely.  
\end{itemize}

From \ref{ax:basic1}, we define $f_i(v)$ and $f_i'(v)$, for $v \in G$, by following the unique $i$ or $i'$ edge from $v$, if it exists, and $\varnothing$ otherwise.  We define $e_i, e_i'$ as the (partial) inverse operations. 

We also define statistics $\varepsilon_i, \varphi_i$ on $G$, and require that they satisfy \ref{ax:kashiwara1} and \ref{ax:kashiwara2} independently using both $e_i, f_i$ and $e_i', f_i'$.  

\begin{itemize}
  \item[\mylabel{ax:Kashiwara-B}{(K)}] The statements \ref{ax:kashiwara1} and \ref{ax:kashiwara2} hold for $e_i,f_i$ along with the statistics $\varepsilon_i,\varphi_i$.  They also hold for $e_i',f_i'$ along with the same statistics $\varepsilon_i,\varphi_i$.
\end{itemize}

In what follows, we will further require extra ``primed'' and ``unprimed'' statistics $\varepsilon_i',\varphi_i',\widehat{\varepsilon}_i,\widehat{\varepsilon}_i$ in our setting, as illustrated in Figure \ref{fig:stats}.

\begin{definition}\label{def:stats}
We let $\varepsilon_i(v)$ and $\varphi_i(v)$ be the \textit{total} distance from $v$ to the top and bottom of its $\{i,i'\}$-connected component. We similarly define $\varepsilon_i'(v), \varphi_i'(v)$ (counting primed edges only) and $\widehat{\varepsilon}_i(v), \widehat{\varphi}_i(v)$ (counting unprimed edges only). 
\end{definition}

\begin{figure}[b]
 \begin{center}
  \includegraphics{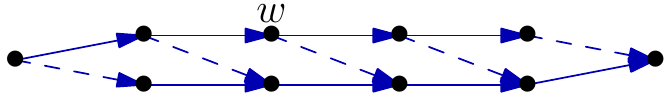}
 \end{center}
 \caption{\label{fig:stats} In the above diagram, we have $\varepsilon(w)=2$, $\widehat{\varepsilon}(w)=2$, $\varepsilon'(w)=0$, $\varphi(w)=3$, $\widehat{\varphi}(w)=2$, and $\varphi'(w)=1$.}
\end{figure}

Another axiom we need is concerns the lengths $\varepsilon_i, \varphi_i$ of strings connected by an $(i \pm 1)$-edge.

\begin{itemize}
\item[\mylabel{ax:length}{(B3)}]Suppose an arrow $z \xrightarrow{i\pm 1} w$ or $z \xrightarrow{i \pm 1'}w$ exists. Then for the $i$-strings through $w$ and through $z$, one of the following two cases holds:
\begin{itemize}
\item[(i)] $\varepsilon_i(w) = \varepsilon_i(z)$ and $\varphi_i(w) = \varphi_i(z) + 1$, or
\item[(ii)] $\varepsilon_i(w) = \varepsilon_i(z)-1$ and $\varphi_i(w) = \varphi_i(z)$.
\end{itemize}
\end{itemize}
Note that Axiom \ref{ax:length} does not specify the changes to $\widehat{\varepsilon}, \varepsilon'$ and $\widehat{\varphi}, \varphi'$, only to $\varepsilon$ and $\varphi$. 
 A precise depiction of this axiom requires several cases, depending on the values of $\varphi_i'(z), \varphi_i'(w)$.  For example, we may have $\varphi_i'(z) = \varphi_i'(w) = 1$ and $\varphi_i'(z)\neq \varphi(z)$, $\varphi_i'(w)\neq \varphi(z)$, giving a diagram of the form:
\begin{align*}
\text{Case (i):} \qquad & \raisebox{-.5\height}{\includegraphics[scale=1]{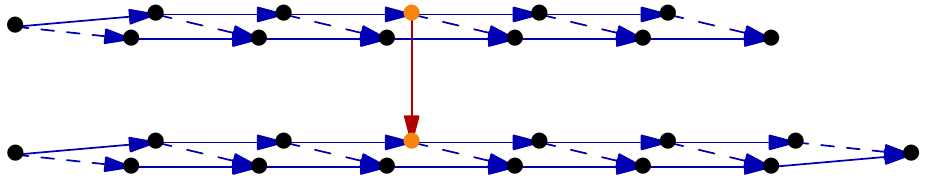}}\\ \\
\text{Case (ii):} \qquad & \raisebox{-.5\height}{\includegraphics[scale=1]{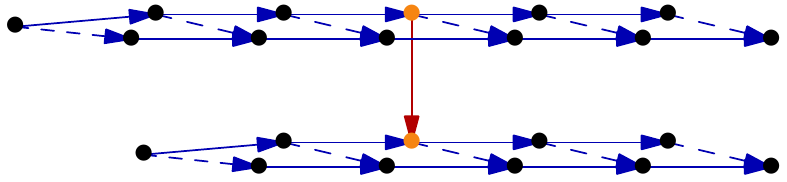}}
\end{align*}
Notice the strong resemblance to Stembridge's Axiom \ref{ax:Stembridge-lengths}.

\subsection{Merge Axioms}

We now state the axioms governing $i/i'$ and $(i+1)/(i+1)'$ edges pointing `downwards' from a common vertex.  In general we will use the convention of drawing $f_i$ and $f_i'$ arrows as pointing downwards, and $e_i$ and $e_i'$ pointing upwards. 

These axioms should be understood as follows. Let $w$ be a vertex with the indicated pair of downwards ($f$) edges, $w \xrightarrow{i\text{ or } i'} x$ and $w \xrightarrow{i+1 \text{ or } (i+1)'} y$. Let $z$ be a common minimum of $x$ and $y$ obtained from a breadth-first search among only $i,i',i+1,(i+1)'$ edges.  That is, $z$ is an element of greatest possible weight among all common lower bounds in this edge-induced subgraph. Then the interval $[w,z] \subset G$ contains the depicted vertices and edges if and only if the specified conditions on $w$ hold.  In this sense, these axioms are analogous to Stembridge axioms \ref{ax:Stembridge-merge} and \ref{ax:Stembridge-dual-merge}. 

Note that $z$ need not be a unique greatest lower bound (e.g. both axioms \ref{ax:primed-square} and \ref{ax:half-solid-square} below may hold). In particular, $G$ need not be a lattice.

In what follows, we define
\[\Delta=(\Delta\veps_i, \Delta\veps_{i+1}) = (\veps_i(w)-\veps_i(y), \veps_{i+1}(w) - \veps_{i+1}(x)).\]
By Axiom \ref{ax:length}, this pair is always either $(0,0)$, $(0,1)$, $(1,0)$ or $(1,1)$.

The first two axioms are for connecting two primed arrows from $w$:

\begin{itemize}
  \item[\mylabel{ax:primed-square}{(A1)}] \textbf{Primed Square:} If $x=f'_i(w)$ and $y=f'_i(w)$ are both defined, then $$f'_i(f'_{i+1}(w))=f_{i+1}'(f_i'(w))\neq \varnothing.$$
  \item[\mylabel{ax:half-solid-square}{(A2)}] \textbf{Half-solid Square:} If $x=f'_i(w)$ and $y=f'_{i+1}(w)$ are both defined, then $$f_{i}(f'_{i+1}(w))=f_{i+1}(f_{i}'(w))\neq \varnothing\hspace{0.3cm}\text{ and }\hspace{0.3cm}f_{i}'(f_{i+1}'(w))\neq f_i(f_{i+1}'(w))$$ if and only if $\Delta=(0,0)$ and $\varphi_{i+1}(w)=1$ and $\widehat{\varphi}_{i+1}(w)=0$.
\end{itemize}

The next two axioms tell us how to merge a primed and unprimed arrow:

\begin{itemize}
  \item[\mylabel{ax:f1primef2-square}{(A3)}] \textbf{Square for $\{f_i',f_{i+1}\}$:}  If $x=f'_i(w)$ and $y=f_{i+1}(w)$ are both defined and either $f_{i+1}'(w)\neq y$ or $f_{i}(w)\neq x$, then $$f'_i(f_{i+1}(w))=f_{i+1}(f_{i}'(w))\neq \varnothing.$$
  \item[\mylabel{ax:f1f2prime-square}{(A4)}] \textbf{Square for $\{f_i,f_{i+1}'\}$:}  If $x=f_i(w)$ and $y=f_{i+1}'(w)$ are both defined, then $$f_i(f_{i+1}'(w))=f_{i+1}'(f_i(w))\neq \varnothing$$ if and only if $\vepshat_i(w)>0$.
\end{itemize}

The final axioms enable us to merge two solid arrows:

\begin{itemize}
  \item[\mylabel{ax:half-primed-square}{(A5)}] \textbf{Half-primed Square:}  Suppose $x=f_i(w)$ and $y=f_{i+1}(w)$ are both defined and $f_i'(w)=\varnothing$. Then $$f_i'(f_{i+1}(w))=f_{i+1}'(f_i(w))\neq \varnothing$$ if and only if $\Delta=(1,1)$.
  \item[\mylabel{ax:square}{(A6)}] \textbf{Square:} Suppose $x=f_i(w)$ and $y=f_{i+1}(w)$ are both defined and $f_i'(w)=\varnothing$.  We have that $$f_i(f_{i+1}(w))=f_{i+1}(f_i(w))\neq \varnothing$$ if and only if either $\Delta=(1,0)$ or $\Delta=(0,1)$. 
  \item[\mylabel{ax:half-primed-stembridge}{(A7)}] \textbf{Half-primed Octagon:} Suppose $x=f_i(w)$ and $y=f_{i+1}(w)$ are both defined and $f_i'(w)=\varnothing$.  We have that $$f_{i+1}(f_i'(f_i(f_{i+1}(w))))=f_i'(f_{i+1}^2(f_i(w)))\neq \varnothing\hspace{0.3cm}\text{ and }\hspace{0.3cm} f_i(f_{i+1}(w))\neq f_{i+1}(f_i(w))$$ if and only if $\Delta=(0,0)$ and $\vepshat_i(w)-\vepshat_i(f_{i+1}(w))=-1$.  
  \item[\mylabel{ax:stembridge}{(A8)}] \textbf{Octagon:} Suppose $x=f_i(w)$ and $y=f_{i+1}(w)$ are both defined and $f_i'(w)=\varnothing$.  We have that $$f_{i+1}(f_i^2(f_{i+1}(w)))=f_i(f_{i+1}^2(f_i(w)))\neq \varnothing\hspace{0.3cm}\text{ and }\hspace{0.3cm}f_i(f_{i+1}(w))\neq f_{i+1}(f_i(w))$$ if and only if $\Delta=(0,0)$ and $\widehat{\varphi}_i(f_{i+1}(w))\ge 2$. 
\end{itemize}

\begin{remark}
The conditions for Axioms \ref{ax:half-primed-stembridge} and \ref{ax:stembridge} are not mutually exclusive, and indeed both can occur together from the same vertex $w$. They are, however, exhaustive, as shown in the next proposition.
\end{remark}

\begin{proposition} \label{prop:some-stembridge-applies}
Let $w$ be a vertex with $f_i,f_{i+1}$ defined and $f'_i$ not defined. If $\Delta = (0,0)$, then at least one of Axioms \ref{ax:half-primed-stembridge} and \ref{ax:stembridge} applies at $w$.
\end{proposition}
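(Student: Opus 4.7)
The plan is to prove the contrapositive: assume axiom \ref{ax:stembridge} fails at $w$, so $\widehat{\varphi}_i(y) \leq 1$ where $y := f_{i+1}(w)$, and derive the condition $\widehat{\varepsilon}_i(y) - \widehat{\varepsilon}_i(w) = 1$ required by axiom \ref{ax:half-primed-stembridge}.

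First, I would apply axiom \ref{ax:length} to the edge $w \xrightarrow{i+1} y$, viewing it through the $\{i,i'\}$-strings. Since $\Delta_{\varepsilon_i} = 0$ by hypothesis, we are in case (i), so $\varepsilon_i(y) = \varepsilon_i(w)$ and $\varphi_i(y) = \varphi_i(w) + 1$. Combined with $\widehat{\varphi}_i(w) \geq 1$ (because $f_i(w)$ is defined), this gives $\varphi_i(y) \geq 2$.

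Second, I would use the classification of $\{i,i'\}$-components from axiom \ref{ax:basic1} to observe that $\varphi'_i(v) \leq 1$ and $\varepsilon'_i(v) \leq 1$ for every vertex $v$: a separated string has only a single primed edge joining its two unprimed chains, and a collapsed string has no primed edges (beyond the singleton case). Combining $\widehat{\varphi}_i(y) \leq 1$ with $\varphi_i(y) \geq 2$ and the decomposition $\varphi_i(y) = \widehat{\varphi}_i(y) + \varphi'_i(y)$, the only consistent values are $\widehat{\varphi}_i(y) = \varphi'_i(y) = 1$. This pins down $\varphi_i(w) = 1$, and then $\widehat{\varphi}_i(w) \geq 1$ forces $\widehat{\varphi}_i(w) = 1$ and $\varphi'_i(w) = 0$.

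Third, I would pinpoint the positions of $w$ and $y$ in their respective strings. Since $\varphi'_i(y) = 1$, the component $C_y$ is separated and $y$ lies above the unique primed edge, with a single unprimed edge remaining above that edge. Since $\widehat{\varphi}_i(w) = 1$, $\varphi'_i(w) = 0$ and $f'_i(w) = \varnothing$, vertex $w$ lies one unprimed step above the bottom of $C_w$, either just below the primed edge of a separated $C_w$ or inside a collapsed string with no primed edges. Granting that $C_w$ is separated with its primed edge above $w$, one reads off $\varepsilon'_i(w) = 1$ and $\varepsilon'_i(y) = 0$, and the identity $\varepsilon_i(w) = \varepsilon_i(y)$ from Step 1 yields
\[
\widehat{\varepsilon}_i(y) - \widehat{\varepsilon}_i(w) = \varepsilon'_i(w) - \varepsilon'_i(y) = 1,
\]
which is the hypothesis of axiom \ref{ax:half-primed-stembridge}.

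The main obstacle is therefore to rule out the collapsed subcase for $C_w$, in which $\varepsilon'_i(w) = 0$ would give $\widehat{\varepsilon}_i(y) = \widehat{\varepsilon}_i(w)$ and neither octagon axiom would apply. The resolution should use the hypothesis that $f_{i+1}(w)$ is defined and lands on a vertex $y$ with the very specific configuration just derived, and transport this data back across the $(i+1)$-edge by applying axiom \ref{ax:length} a second time to the related primed/unprimed edges in $C_y$, together with the commutation rules \ref{ax:basic2}. Making this transport precise --- together with handling the extremal case where the separated string in $C_y$ has chains of minimal length --- is the delicate step of the argument, but all other steps are essentially bookkeeping with $\varepsilon_i = \widehat{\varepsilon}_i + \varepsilon'_i$ and $\varphi_i = \widehat{\varphi}_i + \varphi'_i$.
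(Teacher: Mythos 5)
Your Steps 1--2 match the paper's computation exactly, and your Step 3 correctly identifies that everything reduces to showing $\varepsilon'_i(w)=1$ and $\varepsilon'_i(y)=0$. But the point you flag as ``the main obstacle'' --- ruling out the possibility that the $\{i,i'\}$-component of $w$ is collapsed (or, more generally, that $\varepsilon'_i(w)=0$) --- is exactly the step your proposal leaves unproven, and your suggested repair is not the right tool: Axiom \ref{ax:basic2} only governs edge labels $j$ with $|i-j|>1$, so it says nothing about the adjacent index $i{+}1$, and no second application of Axiom \ref{ax:length} or ``transport across the $(i{+}1)$-edge'' is needed. The resolution is immediate from the hypotheses together with Axiom \ref{ax:basic1}: on a collapsed string the downward $i$- and $i'$-edges coincide (this is how collapsed edges are used throughout the paper, e.g.\ $f_i'(v)=f_i(v)$), so $f_i(w)\neq\varnothing$ would force $f_i'(w)\neq\varnothing$; and on the upper chain of a separated string $f_i'$ is always defined. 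Hence $f_i(w)$ defined and $f_i'(w)=\varnothing$ already force the $i$-string through $w$ to be separated with $w$ on the lower chain, i.e.\ $\varepsilon'_i(w)=1$. This one-line observation is precisely how the paper's proof begins, and once it is in place your computation
$\widehat{\varepsilon}_i(w)-\widehat{\varepsilon}_i(y)=\varepsilon'_i(y)-\varepsilon'_i(w)=-1$
closes the argument; the ``extremal case'' of a minimal-length separated string through $y$ that you worry about plays no role. So the proposal has a genuine gap --- the missing idea is to read $\varepsilon'_i(w)=1$ directly off the hypotheses via Axiom \ref{ax:basic1} --- although the remainder of your bookkeeping coincides with the paper's proof that failure of Axiom \ref{ax:stembridge} forces the length condition of Axiom \ref{ax:half-primed-stembridge}.
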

\begin{proof}
Since $f_i$ is defined but not $f_i'$, the $i$-string through $w$ is separated; in particular $\varepsilon'_i(w) = 1$. Now let $y = f_{i+1}(w)$. By the $\Delta$ condition, $\varepsilon_i(w) = \varepsilon_i(y)$, and by Axiom \ref{ax:length},
\[\varphi_i(y) = \varphi_i(w) + 1 \geq 2.\]
Now suppose $\widehat{\varphi_i}(y) < 2$. Since $\varphi_i(y) \ne \widehat{\varphi_i}(y)$, the $i$-string through $y$ is separated, and moreover $\widehat{\varphi_i}(y) = \varphi_i'(y) = 1$ and $\varepsilon_i'(y) = 0$. We now see
\[\widehat{\varepsilon_i}(w) - \widehat{\varepsilon_i}(y) = \varepsilon_i(w) - \varepsilon_i'(w) - \varepsilon_i(y) + \varepsilon_i'(y) = -1,\]
as required for Axiom \ref{ax:half-primed-stembridge} .
\end{proof}
Figure \ref{fig:primed-axioms} shows the Axioms \ref{ax:primed-square}--\ref{ax:f1f2prime-square}.  Figure \ref{fig:unprimed-axioms} shows those for \ref{ax:half-primed-square}--\ref{ax:stembridge}.

\begin{figure}[t]
\caption{\label{fig:primed-axioms}Axioms for $\{f_i',f_{i+1}'\}$, $\{f_i',f_{i+1}\}$, and $\{f_i,f_{i+1}'\}$.  Arrows for $f_i$ and $f_i'$ shown in red, those for $f_{i+1}$ and $f_{i+1}'$ shown in blue. \vspace{0.1cm}}
\centering
\begin{tabular}[c]{|c|c|} \hline
Conditions & Axiom \\ \hline
(none) & \raisebox{-.5\height}{\includegraphics[scale=1]{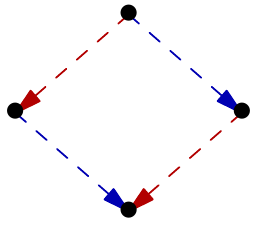}} \\ \hline
\begin{tabular}[c]{@{}c@{}}
$\Delta = (0,0)$ and \\
$\vphi_{i+1}(w) = 1$ and \\
$\vphihat_{i+1}(w)=0$
\end{tabular}&
\raisebox{-.5\height}{\includegraphics[scale=1]{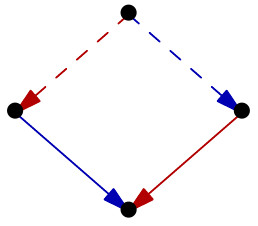}} \\ 
& (and no primed square) \\ \hline
\end{tabular}
\qquad 
\begin{tabular}[c]{|c|c|} \hline
Conditions & Axiom \\ \hline
If $f_{i+1}'(w)\neq f_{i+1}(w)$ & \raisebox{-.5\height}{\includegraphics[scale=1]{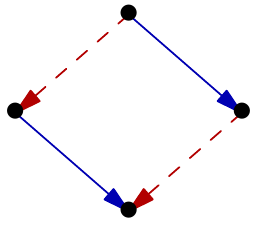}} \\ \hline
\ \ \ \ $\vepshat_i(w) > 0$ \ \ \ \ & \raisebox{-.5\height}{\includegraphics[scale=1]{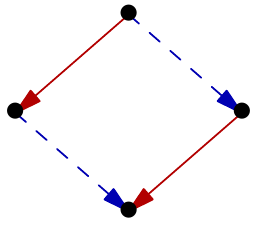}} \\ \hline
\end{tabular}
\end{figure}

\begin{remark} \label{rmk:excluded-lengths}
Note that the axioms do not say anything for $\{f_i, f_{i+1}'\}$ when $\vepshat_i(w) = 0$. Indeed, the length of the path to a common minimum of $f_i(w), f_{i+1}'(w)$ may be arbitrarily long in this case.  
It is somewhat surprising that the uniqueness statement in Theorem \ref{thm:axioms} holds in the absence of such an axiom. In fact, it turns out (Lemma \ref{lem:excluded-lengths}) that one of the other axioms always applies at such a vertex.
\end{remark}

\begin{figure}[t]
\caption{\label{fig:unprimed-axioms}Axioms for $\{f_i,f_{i+1}\}$. Arrows for $f_i$ and $f_i'$ shown in red, those for $f_{i+1}$ and $f_{i+1}'$ shown in blue. \vspace{0.1cm}}
\centering

		\begin{tabular}[c]{|c|c||@{}c@{}|c|} \hline
			Conditions & Axiom & Conditions & Axiom \\ \hline
			$\Delta = (0,1)$, $(1,0)$ & \raisebox{-.5\height}{\includegraphics[scale=1]{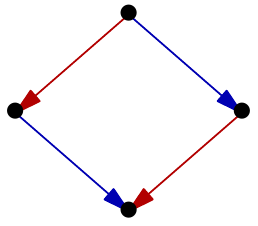}}  &
			$\Delta = (1,1)$ & \raisebox{-.5\height}{\includegraphics[scale=1]{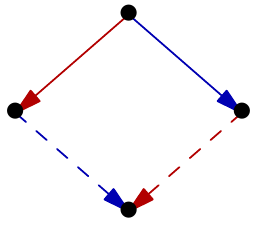}}
			\\ \hline
			\begin{tabular}[c]{c}
				$\Delta = (0,0)$, \\
				$\widehat{\varphi}_i(f_{i+1}(w)) \geq 2$
			\end{tabular}&
			\raisebox{-.5\height}{\includegraphics[scale=1]{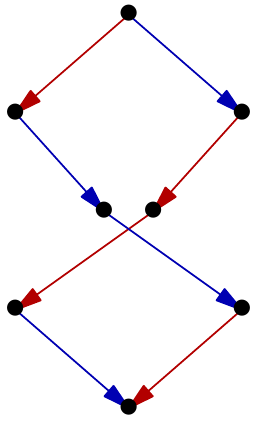}} &
			\begin{tabular}[c]{c}
				$\Delta = (0,0)$, \\
				$\vepshat_1(w) -\vepshat_1(y) = -1$
			\end{tabular}& \raisebox{-.5\height}{\includegraphics[scale=1]{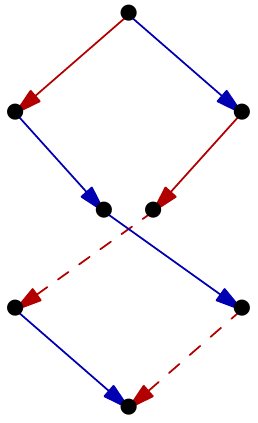}} \\ \hline
		\end{tabular}
\end{figure}

\subsection{The dualized axioms}

We also require the $\eta$-duals of these axioms, obtained by interchanging the roles of $f_i$ with $e_j$, $f_i'$ with $e_j'$, $\varphi_i$ with $\veps_j$, $\hat{\varphi}_i$ with $\hat{\veps}_j$, and $\varphi_i'$ with $\veps_j'$ for $(i,j)\in \{(1,2),(2,1)\}$.  

In particular, the dual axioms start with a node $w$ with two arrows coming into it, say from $x=e_{i+1}(w)$ or $e_{i+1}'(w)$ and $y=e_i(w)$ or $e_i'(w)$.  We define $$\Delta'=(\varphi_{i+1}(w)-\varphi_{i+1}(y),\varphi_i(w)-\varphi_i(x)).$$   Then dual of axioms are as follows:

\begin{itemize}
  \item[\mylabel{ax:dual-primed-square}{(A1$^\ast$)}] \textbf{Dual Primed Square:} If $x=e'_{i+1}(w)$ and $y=e'_i(w)$ are both defined, then $$e'_{i+1}(e'_i(w))=e_i'(e_{i+1}'(w))\neq \varnothing.$$
  \item[\mylabel{ax:dual-half-solid-square}(A2$^\ast$)] \textbf{Dual Half-solid Square:} If $x=e'_{i+1}(w)$ and $y=e'_i(w)$ are both defined, then $$e_{i+1}(e'_i(w))=e_i(e_{i+1}'(w))\neq \varnothing\hspace{0.3cm}\text{ and }\hspace{0.3cm}e_{i+1}'(e_i'(w))\neq e_{i+1}(e_i'(w))$$ if and only if $\Delta'=(0,0)$ and $\veps_{i}(w)=1$ and $\vepshat_i(w)=0$.
  \item[\mylabel{ax:dual-f1primef2-square}{(A3$^\ast$)}] \textbf{Dual Square for $\{e_{i+1}',e_i\}$:}  If $x=e'_{i+1}(w)$ and $y=e_i(w)$ are both defined and either $e_i'(w)\neq y$ or $e_{i+1}(w)\neq x$, then $$e'_{i+1}(e_i(w))=e_i(e_{i+1}'(w))\neq \varnothing.$$
  \item[\mylabel{ax:dual-f1f2prime-square}{(A4$^\ast$)}] \textbf{Dual Square for $\{e_{i+1},e_i'\}$:}  If $x=e_{i+1}(w)$ and $y=e_i'(w)$ are both defined, then $$e_{i+1}(e_i'(w))=e_i'(e_{i+1}(w))\neq \varnothing$$ if and only if $\widehat{\varphi}_{i+1}(w)>0$.
  \item[\mylabel{ax:dual-half-primed-square}{(A5$^\ast$)}] \textbf{Dual Half-primed Square:}  Suppose $x=e_{i+1}(w)$ and $y=e_i(w)$ are both defined and $e_{i+1}'(w)=\varnothing$. Then $$e_{i+1}'(e_i(w))=e_i'(e_{i+1}(w))\neq \varnothing$$ if and only if $\Delta'=(1,1)$.
  \item[\mylabel{ax:dual-square}{(A6$^\ast$)}] \textbf{Dual Square:} Suppose $x=e_{i+1}(w)$ and $y=e_i(w)$ are both defined and $e_{i+1}'(w)=\varnothing$.  We have that $$e_{i+1}(e_i(w))=e_i(e_{i+1}(w))\neq \varnothing$$ if and only if either $\Delta'=(1,0)$ or $\Delta'=(0,1)$. 
  \item[\mylabel{ax:dual-half-primed-stembridge}{(A7$^\ast$)}] \textbf{Dual Half-primed Octagon:} Suppose $x=e_{i+1}(w)$ and $y=e_i(w)$ are both defined and $e_{i+1}'(w)=\varnothing$.  We have that $$e_i(e_{i+1}'(e_{i+1}(e_i(w))))=e_{i+1}'(e_i^2(e_{i+1}(w)))\neq \varnothing\hspace{0.3cm}\text{ and }\hspace{0.3cm} e_{i+1}(e_i(w))\neq e_i(e_{i+1}(w))$$ if and only if $\Delta'=(0,0)$ and $\widehat{\varphi}_{i+1}(w)-\widehat{\varphi}_{i+1}(e_i(w))=-1$.  
  \item[\mylabel{ax:dual-stembridge}{(A8$^\ast$)}] \textbf{Dual Octagon:} Suppose $x=e_{i+1}(w)$ and $y=e_{i}(w)$ are both defined and $e_{i+1}'(w)=\varnothing$.  We have that $$e_i(e_{i+1}^2(e_i(w)))=e_{i+1}(e_i^2(e_{i+1}(w)))\neq \varnothing\hspace{0.3cm}\text{ and }\hspace{0.3cm}e_{i+1}(e_i(w))\neq e_i(e_{i+1}(w))$$ if and only if $\Delta'=(0,0)$ and $\vepshat_{i+1}(e_i(w))\ge 2$. 
\end{itemize}

\section{Unique characterization by the axioms} \label{sec:proof-of-uniqueness}

In this section, we prove part of Theorem \ref{thm:axioms} -- that the local axioms stated in Section \ref{sec:axioms} determine the global structure of the graph.

\begin{definition}
We say a graph $G$ {\bf satisfies the axioms} if $G$ is a finite directed graph with vertices weighted by a function $\wt: G \to \mathbb{Z}_{\geq 0}^n$ and edges labeled $1', 1, \ldots, n{-}1', n{-}1$, satisfying the axioms (B1)-(B3), \ref{ax:Kashiwara-B}, \ref{ax:primed-square}-\ref{ax:stembridge}, and \ref{ax:dual-primed-square}-\ref{ax:dual-stembridge} in Section \ref{sec:axioms}.
\end{definition}

We will show that if $G$ satisfies the axioms, and is connected, then it has a unique highest weight element $g$, whose weight is a strict partition (Proposition \ref{prop:unique-maximum}). We then show that $G$ as a whole is uniquely determined by the weight of the maximal element (Proposition \ref{prop:uniquely-determined}). It remains to show that such a graph actually exists, for each strict partition. We address this in Section \ref{sec:tableaux}, where we show that the shifted tableau crystals $G = \ShST(\lambda,n)$ satisfy the axioms.

\subsection{Preliminary lemmas}

We first require a technical lemma that shows the axioms of Section \ref{sec:axioms} prevent a certain combination of lengths from occurring simultaneously at a node. This case is precisely the situation in which none of the merge axioms would apply (see Remark \ref{rmk:excluded-lengths}). It follows from the lemma below that our axioms do indeed `cover all cases', which we need for the uniqueness proof.

\begin{lemma}[Excluded lengths]\label{lem:excluded-lengths}
Suppose $G$ satisfies the axioms. If a vertex $w \in G$ has $\widehat{\varepsilon}_i(w) = \varphi'_i(w) = \widehat{\varphi}_{i+1}(w) = \varepsilon'_{i+1}(w) = 0$, then there are no $i$, $i+1$, $i'$, or $(i+1)'$ edges connected to $w$.  
\end{lemma}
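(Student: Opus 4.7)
The plan is to proceed in two stages. In the first stage, I observe that the four hypotheses immediately yield
\[
e_i(w) = f_i'(w) = f_{i+1}(w) = e_{i+1}'(w) = \varnothing,
\]
since any edge of these four types at $w$ would produce a positive contribution to exactly one of the vanishing statistics $\widehat\varepsilon_i$, $\varphi_i'$, $\widehat\varphi_{i+1}$, $\varepsilon_{i+1}'$ by Definition~\ref{def:stats}. It therefore suffices to rule out the four remaining possibilities $e_i'(w)$, $f_i(w)$, $e_{i+1}(w)$, and $f_{i+1}'(w)$.

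In the second stage, I would use Axiom~\ref{ax:basic1} to classify the possible positions of $w$ inside its $\{i,i'\}$-component under $\widehat\varepsilon_i(w) = \varphi_i'(w) = 0$. In a separated string $T_0 \xrightarrow{i} \cdots \xrightarrow{i} T_k \xrightarrow{i'} T_{k+1} \xrightarrow{i} \cdots \xrightarrow{i} T_{2k+1}$, a direct count shows $\widehat\varepsilon_i(T_j) = j$ for $j \le k$ and $\widehat\varepsilon_i(T_j) = j - 1$ for $j \ge k+1$, while $\varphi_i'(T_j) = 1$ for $j \le k$ and $0$ for $j \ge k+1$; the simultaneous vanishing forces $k = 0$ and $w = T_1$, so that $w$ is the bottom vertex of a 2-vertex separated string $z \xrightarrow{i'} w$ with $e_i'(w) = z$ the only $\{i,i'\}$-edge at $w$. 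A parallel but more case-intensive analysis handles the collapsed-string possibilities, using the weight conditions at the top and bottom of a collapsed string given in Axiom~\ref{ax:basic1}. Combined with the symmetric classification on the $\{i+1,(i+1)'\}$-side, this reduces the claim to a short list of local configurations.

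For each configuration, I would then assume for contradiction that one of $e_i'(w)$, $f_i(w)$, $e_{i+1}(w)$, or $f_{i+1}'(w)$ is defined and apply an appropriate merge axiom. For instance, if $e_i'(w) = z$ in the 2-vertex separated case, then $f_i'(z) = w$ and $f_i(z) = \varnothing$; if moreover $f_{i+1}(z)$ is defined, Axiom~\ref{ax:f1primef2-square} applies at $z$ (the side condition $f_i(z) \ne f_i'(z)$ is satisfied since $f_i(z) = \varnothing$), forcing $f_i'(f_{i+1}(z)) = f_{i+1}(w) \ne \varnothing$, in contradiction with $\widehat\varphi_{i+1}(w) = 0$. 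If $f_{i+1}'(z)$ is defined instead, one of Axioms~\ref{ax:primed-square}, \ref{ax:half-solid-square}, or \ref{ax:f1f2prime-square} at $z$ produces an analogous contradiction. The three remaining cases $f_i(w), e_{i+1}(w), f_{i+1}'(w) \ne \varnothing$ are treated by parallel applications of the merge axioms~\ref{ax:primed-square}--\ref{ax:f1f2prime-square} and their duals~\ref{ax:dual-primed-square}--\ref{ax:dual-f1f2prime-square}, applied at $w$ or at an appropriate neighbor.

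I expect the main obstacle will be the residual subcase in which the relevant neighbor (e.g.\ $z$ above) has no $\{i+1,(i+1)'\}$-edges at all, so that none of the merge axioms applies directly. In that subcase, K2 from Axiom~\ref{ax:Kashiwara-B} at $z$ gives $\varphi_{i+1}(z) - \varepsilon_{i+1}(z) = \wt_{i+1}(z) - \wt_{i+2}(z) = 0$, while the weight shift $\wt(z) = \wt(w) + \alpha_i$ coming from the $i'$-edge converts this into a constraint on the $\{i+1\}$-statistics at $w$. Combined with Axiom~\ref{ax:length} applied to this $i'$-edge (viewed as an $(i{+}1){-}1$ edge relative to the $\{i+1\}$-strings), this yields a contradiction with the assumed vanishing of $\widehat\varphi_{i+1}(w)$ and $\varepsilon_{i+1}'(w)$. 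Carefully verifying the side conditions of each merge axiom invoked and handling the collapsed-string subcases cleanly will require the most attention.
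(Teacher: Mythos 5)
Your Stage 1 is fine: each of $e_i(w)$, $f_i'(w)$, $f_{i+1}(w)$, $e_{i+1}'(w)$ being defined would force the corresponding vanishing statistic to be positive, so the real content is to exclude $e_i'(w)$, $f_i(w)$, $e_{i+1}(w)$, $f_{i+1}'(w)$. But your Stage 2 classification rests on a wrong picture of a separated string. By Axiom \ref{ax:basic1} a separated string is a \emph{ladder} --- two unprimed chains of equal length joined by several primed rungs --- not a single path with one primed edge in the middle (compare Figure \ref{fig:stats}, where $\varepsilon'(w)=0$ and $\varphi'(w)=1$ simultaneously). On the correct picture, $\widehat{\varepsilon}_i(w)=\varphi_i'(w)=0$ only says that $w$ sits at the top of the \emph{lower} unprimed chain of its $i$-string; it does not force that string to have two vertices, and in particular $f_i(w)$ may be defined with the string arbitrarily long. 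Symmetrically, $w$ is the bottom of the upper chain of its $(i{+}1)$-string, also of arbitrary length. So there is no finite ``short list of local configurations,'' and the case $f_i(w)\neq\varnothing$ cannot be dispatched as you suggest.

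The deeper gap is in Stage 3: a single application of a merge axiom at $w$ or at one neighbor does not yield a contradiction, because the hypotheses are locally consistent with each of the four remaining edges existing. The paper's argument must (i) show that $f_{i+1}'(e_i'(w))$ is defined via an infinite-regress argument --- if it were not, then $e_{i+1}'(e_i'(w))$ would satisfy the same four vanishing conditions as $w$ --- thereby producing a primed square below $x=e_i'(w)$; and then (ii) run an induction along the entire $i$-string through $s=e_{i+1}(x)$, building a chain of $N{+}1$ commuting $\{f_i,f_{i+1}\}$ squares (with $N=\varphi_i(w)$) whose last square would require an $f_i$-edge past the bottom of the $i$-string through $x$, which is the contradiction. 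Neither the regress nor the iterated-square construction appears in your plan. Moreover, the ``residual subcase'' you flag (a neighbor with no $(i{+}1)$-edges) is not where the difficulty lies, and the proposed computation with \ref{ax:Kashiwara-B} and \ref{ax:length} there does not by itself contradict $\widehat{\varphi}_{i+1}(w)=\varepsilon_{i+1}'(w)=0$. As written, the proposal would not close.
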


\begin{proof}
It suffices to assume $i=1$ and $n=3$, by Axiom \ref{ax:basic2}.  So, suppose $w$ satisfies $\widehat{\varepsilon}_1(w) = \varphi'_1(w) = \widehat{\varphi}_{2}(w) = \varepsilon'_{2}(w) = 0$. Let $\wt(w) = (a,b,c)$.  Let $G$ be the connected component of $w$.  In this setting we wish to show that $G=\{w\}$.  

{\bf Step 1 (A primed square at $w$)}.
We first claim that if $G\neq \{w\}$, then $x = e_1'(w) \ne \varnothing$. To see this, note that the only way for a vertex to have $e'_1(w) = f'_1(w) = \varnothing$ is if the $1/1'$ string through $w$ consists of a single vertex, by Axiom \ref{ax:basic1}. Moreover, in that case $\wt(w) = (0,0,c)$, and in particular, the $2/2'$ string through $w$ is also collapsed. But then the assumed conditions $\widehat{\varphi}_{2}(w) = \varepsilon'_{2}(w) = 0$ show that the $2/2'$ string is also a single vertex (and $c=0$). Then, since $G$ is connected, $G = \{w\}$.

By a symmetric argument, we also deduce $y = f_2'(w) \ne \varnothing$ (unless $G = \{w\}$).  Moreover, the assumed conditions $\vepshat_1(w)=0$ and $\vphihat_2(w)=0$ now show that both strings through $w$ are separated strings by Axiom \ref{ax:basic1}, and that $x$ is the top of the 1/1' string through $w$, and $y$ the bottom of the $2/2'$ string through $w$. Since all weights are in $\mathbb{Z}_{\ge 0}^3$, we conclude by Axiom \ref{ax:Kashiwara-B} that $b \geq 2$.

Finally, we claim that $z = f_2'(x) \ne \varnothing$. For contradiction, suppose not. It follows that $f_2(x) = \varnothing$ as well, since otherwise (noting that $f_2 \ne f_2'$ at $x$) Axiom \ref{ax:f1primef2-square} would imply that $f_2(w)$ is defined, contradicting our assumptions. So $x$ is the bottom of a $2/2'$ string, which by Axiom \ref{ax:basic1} is not collapsed, since $\wt(x) = (a+1,b-1,c)$ by Axiom \ref{ax:Kashiwara-B} and $b-1 > 0$. Therefore $\tilde{w} = e_2'(x)$ must be defined. We claim that $\tilde{w}$ satisfies the assumed conditions at $w$, namely, $\widehat{\varepsilon}_1(\tilde{w}) = \varphi'_1(\tilde{w}) = \widehat{\varphi}_{2}(\tilde{w}) = \varepsilon'_{2}(\tilde{w}) = 0$, giving a contradiction by infinite regress. We already see $\widehat{\varphi}_2(\tilde{w}) = \varepsilon'_2(\tilde{w}) = \varnothing$. For the $1/1'$ string, Axiom \ref{ax:length} applied to the $\tilde{w} \xrightarrow{2'} x$ edge shows that either $\tilde{w}$ is the top of the $1/1'$ string, or the top is one step higher. Either way, the top has $\wt_2 = b$ or $b-1 > 0$, so the $1/1'$ string is separated. We also can't have $f_1'(\tilde{w})$ defined, since then Axiom \ref{ax:primed-square} would make $e_2'(w)$ defined (equalling $f_1'(\tilde{w})$), contradicting the assumption $\veps_2'(w)=0$. Therefore $e_1'(\tilde{w})$ is defined instead and, since this is the top of the string, $e_1(\tilde{w})$ can't be defined. This gives the desired properties for $\tilde{w}$.

We have shown that there is a primed square,
\begin{center}\includegraphics[scale=1]{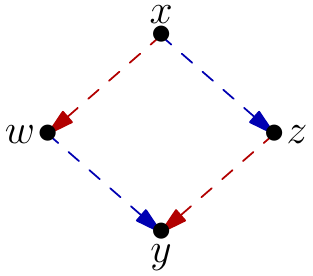}\end{center}
where the strings through $w$ are separated, with top and bottom vertices $x$ and $y$ respectively.

{\bf Step 2 (Setting up vertex $s := e_2(x)$ for contradiction)}.  Since $y$ is at the bottom of its $2/2'$ string, Axiom \ref{ax:length} applied to the $z \xrightarrow{1'} y$ edge shows that $\varepsilon_2(z) \geq 2$.  Thus $s = e_2(x)$ is defined.

We now analyze $s$ in detail.  By Axiom \ref{ax:length} applied to the $s \xrightarrow{2} x$ edge, either $\veps_1(s)=0$ or $\veps_1(s)=1$.  Either way, $\wt_2 = b$ or $b-1 > 0$ at the top of the $1/1'$ string through $s$, so the string is separated by Axiom \ref{ax:basic1}. In particular, exactly one of $f_1'$ or $e_1'$ is defined at $s$.

Suppose $f_1'(s)$ is defined. If $f_2(s) = f_2'(s)$, we get a primed square with top element $s$ and bottom element $w$ by Axiom \ref{ax:primed-square}, implying $e_2'(w)$ defined, a contradiction. If instead $f_2(s)\neq f_2'(s)$, Axiom \ref{ax:f1primef2-square} gives an $\{f_1',f_2\}$ square from $s$ to $w$ (involving $x$), contradicting the \emph{dual} axiom \ref{ax:dual-f1f2prime-square} since $\widehat{\varphi}_2(w) = 0$. Either way we get a contradiction, so $f_1'(s)$ is not defined.

Therefore $t = e_1'(s)$ is defined and is the top of the separated $1/1'$ string through $s$. (See Figure \ref{fig:s-setup}.) Axiom \ref{ax:length}, applied to the $s \xrightarrow{2} x$ edge, now shows that $\varphi_1(s) = \varphi_1(x)$. Comparing the location of $s$ and $x$ in their respective separated$1/1'$ strings, we see that
\[\widehat{\varphi}_1(s) = \widehat{\varphi}_1(x) + 1 = \widehat{\varphi}_1(w) + 1 = N+1\]
where $N=\vphi_1(w)$.

\begin{figure}
\centering
\includegraphics{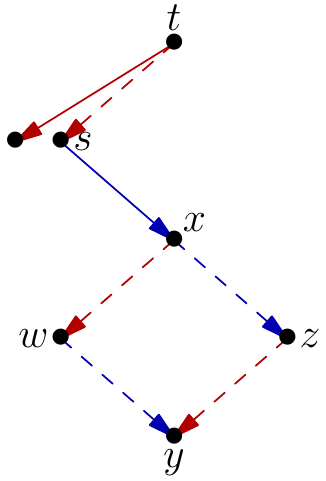}
\caption{\label{fig:s-setup} The vertices $s$ and $t$ defined in Step 2 of the proof by contradiction of Lemma \ref{lem:excluded-lengths}.}
\end{figure}

{\bf Step 3 (Contradiction from $\{f_1,f_2\}$ squares at $s$).} We claim that at each of the vertices $s, f_1(s), f_1^2(s), \ldots, f^N(s)$, there is an $f_2$ edge pointing down from it and an $\{f_1, f_2\}$ square attaching this edge to the $1/1'$ string through $x$. This gives a contradiction since then $f_2f_1^{N+1}(s) = f_1^{N+1}(x) = \varnothing$. 

We prove this claim inductively. For the base case, observe that since $t$ and $x$ are at the top of their respective $1/1'$ strings, we have 
\[\varepsilon_1(s) - \varepsilon_1(x) = 1.\]  Since $f_1(s)$ is defined, as is $f_2(s)=x$ (but not $f_1'$), we either get an $\{f_1,f_2\}$ square or half-primed square with top vertex $s$ by Axioms \ref{ax:half-primed-square} and \ref{ax:square}. But in the latter case $e_2'(w)$ would be defined, contradicting the assumption $\veps_2'(w)=0$.

Next, suppose the claim is established for $f_1^i(s)$ for some $0 \leq i \leq N-1$. Write $\tilde{s} = f_1^{i+1}(s)$. We have $f_1(\tilde{s})$ defined since $i+1 < N+1$ (and $f_1'(\tilde{s})$ not defined), and inductively we have 
\[\tilde{x} := f_2(\tilde{s}) = f_2f_1(f_1^i(s))\]
is defined and equals 
\[f_1f_2(f_1^i(s)) = \cdots = f_1^{i+1}(f_2(s)) = f_1^{i+1}(x).\]
Now we see directly that 
\[\varepsilon_1(\tilde{s}) - \varepsilon_1(\tilde{x}) = (i+2) - (i+1) = 1,\]
so again we get either an $\{f_1,f_2\}$ square or half-primed square at $\tilde{s}$. But if we get a half-primed square, we see that 
\[f_2'f_1(\tilde{s}) = f_1'f_1^{i+1}(x) = f_1^{i+1}(w),\]
so $e_2'$ is defined at $f_1^{i+1}(w)$. But then $e_2'$ is defined at $w$ by repeated application of the dual axiom \ref{ax:dual-f1primef2-square} (note that the $f_1$ edge is separated), contradicting our assumption that $\veps_2'(w)=0$. 
This completes the proof.
\end{proof}

We will also use the following simpler lemmas. For simplicity we state them only for $i=1,2$:

\begin{lemma} \label{lem:collapsed-adjacent-strings}
Suppose $G$ satisfies Axioms \ref{ax:basic1}-\ref{ax:length} and \ref{ax:Kashiwara-B}.
\begin{itemize}
\item[(i)] If $w \xrightarrow{1 \text{ or } 1'} z$ is an edge and $\varphi_{2}(z) = \varphi_{2}(w)+1$, then the $2/2'$-string through $w$ is collapsed if and only if the $2/2'$-string through $z$ is collapsed.
\item[(ii)] If $w \xrightarrow{1 \text{ or } 1'} z$ is an edge and $\varphi_{2}(z) = \varphi_{2}(w)$, the $2$-string through $z$ is separated.
\item[(iii)] If $w \xrightarrow{2 \text{ or } 2'} z$ is an edge, the $1$-string through $w$ is collapsed if and only if the $1$-string through $z$ is collapsed and $\varphi_1(z) = \varphi_1(w)$.
\end{itemize}
\end{lemma}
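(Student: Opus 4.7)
\emph{Plan.} All three parts reduce to the same weight-tracking argument. The key input is Axiom \ref{ax:basic1}, which characterizes a $\{j,j'\}$-connected component as collapsed precisely when its top vertex $v^*$ satisfies $\wt_{j+1}(v^*)=0$. Combined with the Kashiwara Axiom \ref{ax:Kashiwara-B} (every $j$- or $j'$-edge shifts the weight by $\pm\alpha_j$), the walk from $v$ up to $v^*$ along $\varepsilon_j(v)$ edges yields $\wt_{j+1}(v^*) = \wt_{j+1}(v) - \varepsilon_j(v)$. My plan is to compute this quantity at both endpoints of the given cross-edge and use Axiom \ref{ax:length} to compare $\varepsilon_j$ at the two endpoints, then read off each claim directly from Axiom \ref{ax:basic1}.

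For parts (i) and (ii), the edge $w \xrightarrow{1 \text{ or } 1'} z$ gives $\wt_3(z) = \wt_3(w)$ by \ref{ax:Kashiwara-B}. The hypothesis $\varphi_2(z) = \varphi_2(w)+1$ of (i) is precisely case (i) of \ref{ax:length}, in which $\varepsilon_2(z) = \varepsilon_2(w)$; combining these yields $\wt_3(z^*) = \wt_3(w^*)$, so the two $\{2,2'\}$-strings are collapsed or separated in tandem by \ref{ax:basic1}. The hypothesis $\varphi_2(z) = \varphi_2(w)$ of (ii) is case (ii) of \ref{ax:length}, in which $\varepsilon_2(z) = \varepsilon_2(w) - 1$, so $\wt_3(z^*) = \wt_3(w^*) + 1 \geq 1$ by nonnegativity of weights, forcing the $\{2,2'\}$-string through $z$ to be separated.

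Part (iii) is essentially symmetric with the roles of indices $1$ and $2$ swapped. Now $w \xrightarrow{2 \text{ or } 2'} z$ yields $\wt_2(z) = \wt_2(w)-1$, and the top $v^{**}$ of the $\{1,1'\}$-string through $v$ satisfies $\wt_2(v^{**}) = \wt_2(v) - \varepsilon_1(v)$. In case (i) of \ref{ax:length} ($\varepsilon_1$ unchanged), substitution gives $\wt_2(z^{**}) = \wt_2(w^{**}) - 1$; nonnegativity of weights then forces $\wt_2(w^{**}) \geq 1$, so the $\{1,1'\}$-string through $w$ cannot be collapsed. Hence collapsedness of the $w$-string forces case (ii) of \ref{ax:length}, which delivers both $\varphi_1(z) = \varphi_1(w)$ and $\wt_2(z^{**}) = \wt_2(w^{**}) = 0$. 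Conversely, the right-hand hypothesis $\varphi_1(z) = \varphi_1(w)$ singles out case (ii) of \ref{ax:length}, and the same weight equality propagates collapsedness from $z$ back to $w$.

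The argument is essentially bookkeeping. The only real ``obstacle'' I anticipate, and it is a minor one, appears in part (iii): one must invoke the nonnegativity of the weight function $\wt : G \to \mathbb{Z}_{\geq 0}^n$ at exactly the right moment to exclude case (i) of \ref{ax:length} under the collapsed-string hypothesis.
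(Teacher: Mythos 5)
Your proof is correct and is essentially the paper's argument in dual form: the paper tracks $\wt_i$ against $\varphi_i$ (a string is collapsed iff its bottom vertex has $\wt_i=0$, so iff $\wt_i=\varphi_i$ along it), while you track $\wt_{i+1}$ against $\varepsilon_i$ (collapsed iff the top vertex has $\wt_{i+1}=0$), using the same ingredients — the weight characterization in \ref{ax:basic1}, the weight shifts from \ref{ax:Kashiwara-B}, the two cases of \ref{ax:length}, and nonnegativity of weights at the string's end. There is no gap; the only cosmetic difference is that you route parts (i)--(ii) through \ref{ax:length} where the paper's bottom-of-string bookkeeping makes that invocation unnecessary.
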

\begin{proof}
For (i) and (ii), this follows because $\wt(z)_2 = \wt(w)_2 + 1$ and because a $2$-string is collapsed if and only if $\wt_2 = \varphi_2$ along the string (since the bottom vertex has $\wt_2 = 0$). 

For statement (iii)$(\Rightarrow)$: we have $\wt(z)_1 = \wt(w)_1 = \varphi_1(w)$, and we cannot have $\varphi_1(z) = \varphi_1(w)+1$ since the weights must remain nonnegative at the bottom of the $1$-string. Thus $\varphi_1(z) = \varphi_1(w) = \wt(z)_1$ and the $1$-string through $z$ is collapsed. For (iii)($\Leftarrow$), we have $\wt(z)_1 = \wt(w)_1 = \varphi_1(z) = \varphi_1(w)$.
\end{proof}

\begin{lemma}\label{lem:copying-along-f1'}
Suppose $G$ satisfies the axioms. If $w \xrightarrow{1'} z$ is an edge and $\varphi_2(w) = \varphi_2(z)$, then $\widehat{\varphi}_2(z) = 0$.
\end{lemma}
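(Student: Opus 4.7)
My plan is to proceed by strong induction on $N := \varphi_2(w) = \varphi_2(z)$. The base case $N = 0$ is immediate, as $\varphi_2(z) = 0$ forces $\widehat{\varphi}_2(z) = 0$. In preparation for the inductive step, I record two consequences of the hypotheses: applying the length axiom \ref{ax:length} to the edge $w \xrightarrow{1'} z$ with $i = 2$ puts us in its case (ii), giving $\varepsilon_2(z) = \varepsilon_2(w) - 1$; and Lemma \ref{lem:collapsed-adjacent-strings}(ii) asserts that the $2/2'$-string through $z$ is separated.

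The inductive step splits on whether $f_2'(w)$ is defined. In the easy case, $f_2'(w) = w_1$ exists, and Axiom \ref{ax:primed-square} (Primed Square) yields $z_1 := f_2'(z) = f_1'(w_1)$, hence a new $f_1'$ edge $w_1 \to z_1$ with $\varphi_2(w_1) = \varphi_2(z_1) = N - 1$. By the inductive hypothesis, $\widehat{\varphi}_2(z_1) = 0$, and since the edge $z \xrightarrow{2'} z_1$ is primed, the unprimed-edge count $\widehat{\varphi}_2$ along the separated $2/2'$-string through $z$ gives $\widehat{\varphi}_2(z) = \widehat{\varphi}_2(z_1) = 0$.

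The hard case is when $f_2'(w)$ is undefined and (as $N \geq 1$) $y := f_2(w)$ is defined. Axiom \ref{ax:f1primef2-square} applies vacuously at $w$ and produces $z_0 := f_2(z) = f_1'(y)$ together with an $f_1'$ edge $y \to z_0$ satisfying $\varphi_2(y) = \varphi_2(z_0) = N - 1$. The inductive hypothesis yields $\widehat{\varphi}_2(z_0) = 0$, and applying the easy case of the induction at $(y, z_0)$ forces $f_2'(y)$ to be defined; Axiom \ref{ax:primed-square} at $y$ then gives $f_2'(z_0) = f_1'(f_2'(y))$. I then plan to push this $f_2'$-structure back up through the $2/2'$-string in order to contradict the assumption that $f_2'(w)$ is undefined.

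The main obstacle is packaging this contradiction cleanly. The cleanest route I foresee is a weight comparison: every $f_i$ or $f_i'$ edge decreases $\wt_2$ by exactly $1$ (Axiom \ref{ax:kashiwara1}), and Axiom \ref{ax:basic1} constrains the weights at the tops and bottoms of the $2/2'$-strings (forcing e.g.\ $\wt_2 = 0$ at the bottom of a collapsed string). Combined with the length data $\varepsilon_2(z) = \varepsilon_2(w) - 1$, $\varphi_2(z) = \varphi_2(w)$ and the identity $\wt_2(z) = \wt_2(w) + 1$, these constraints on the two $2/2'$-strings (through $w$ and through $z$) should yield either a numerical weight contradiction or an excluded-lengths configuration at $z_0$ to which Lemma \ref{lem:excluded-lengths} applies. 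I expect the argument to require a short case analysis splitting on whether each of the two $2/2'$-strings is separated or collapsed, with the same structural obstruction appearing in each branch.
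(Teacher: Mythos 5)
Your easy case is sound, and your descent in the hard case ($z_0 := f_2(z) = f_1'(y)$ via Axiom \ref{ax:f1primef2-square}, then the inductive conclusion $\widehat{\varphi}_2(z_0)=0$) is exactly the right configuration to look at -- it is in fact the same descent the paper performs, only step-by-step instead of all at once via repeated applications of \ref{ax:primed-square} and \ref{ax:f1primef2-square}. But the hard case is never closed, and this is where the whole content of the lemma sits. The claim that ``applying the easy case of the induction at $(y,z_0)$ forces $f_2'(y)$ to be defined'' has no justification: the inductive hypothesis only tells you $\widehat{\varphi}_2(z_0)=0$, and says nothing about $f_2'(y)$. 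Indeed, since $y=f_2(w)$ lies on the same separated $2/2'$-string as $w$ and the top (primed-available) chain of a separated string is closed under $e_2$ by Axiom \ref{ax:basic1}, having $f_2'(y)$ defined would already force $f_2'(w)$ defined -- so proving that claim is equivalent to the contradiction you are seeking, not a stepping stone to it. Your fallback plans also do not suffice: the hard-case configuration is perfectly consistent with all weight constraints from \ref{ax:Kashiwara-B} and \ref{ax:basic1} (and note that $f_1,f_1'$ edges \emph{increase} $\wt_2$, contrary to your parenthetical), and the hypotheses of Lemma \ref{lem:excluded-lengths} at $z_0$ (e.g.\ $\widehat{\varepsilon}_1(z_0)=\varepsilon_2'(z_0)=0$) are not established and need not hold.

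The missing ingredient is the dual axiom \ref{ax:dual-f1f2prime-square}, which you never invoke. In the hard case you have $e_2(z_0)=z$ and $e_1'(z_0)=y$ both defined, and the square closes: $e_2(e_1'(z_0)) = e_2(y) = w = e_1'(z) = e_1'(e_2(z_0)) \neq \varnothing$. Axiom \ref{ax:dual-f1f2prime-square} says this can happen only if $\widehat{\varphi}_2(z_0) > 0$, directly contradicting $\widehat{\varphi}_2(z_0)=0$ from your inductive hypothesis; so the hard case is impossible and your induction goes through. This is precisely how the paper concludes: it shows $\varphi_2'(w)\le\varphi_2'(z)$ and $\widehat{\varphi}_2(w)\le\widehat{\varphi}_2(z)$ by pushing the $1'$-edge down the string, and then, assuming $\widehat{\varphi}_2(z)\ge 1$, applies repeated \ref{ax:f1primef2-square} followed by \ref{ax:dual-f1f2prime-square} at the bottom to get the contradiction. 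With that one-line repair your argument is essentially the paper's proof in inductive form; without it, there is a genuine gap.
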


\begin{proof}
By Lemma \ref{lem:collapsed-adjacent-strings}(ii), the $2$-string through $z$ is separated.

If the $2$-string through $w$ is collapsed, we see by repeated application of Axiom \ref{ax:primed-square} that $\varphi_2'(w) \leq \varphi_2'(z)\ (\leq 1)$. But then we see 
\begin{align*}
\varphi_2'(z) \leq \varphi_2'(z) + \widehat{\varphi}_2(z) &= \varphi_2(z) \\
&= \varphi_2(w) = \varphi_2'(w) \leq \varphi_2'(z),
\end{align*}
so $\widehat{\varphi}_2(z) = 0$ as desired.

Now suppose the $2$-string through $w$ is separated. By repeated application of Axioms \ref{ax:primed-square} (if $\varphi_2'(w)= 1$) and \ref{ax:f1primef2-square}, we see that $\varphi_2'(w) \leq \varphi_2'(z)$ and $\widehat{\varphi}_2(w) \leq \widehat{\varphi}_2(z)$. By hypothesis the sums $\varphi_2' + \widehat{\varphi}_2$ are equal at $w$ and $z$, so in fact both inequalities are equalities. Suppose for contradiction $\widehat{\varphi}_2(z) = \widehat{\varphi}_2(w) = 1 + N$, with $N \geq 0$. Let $\tilde{w} :=  f_2^N(w)$. By repeated application of Axiom \ref{ax:f1primef2-square}, we have $f_2f_1'(\tilde{w}) = f_1'f_2(\tilde{w}) = f_2^{1+N}(z)$, which contradicts the dual axiom \ref{ax:dual-f1f2prime-square} since $\widehat{\varphi}_2(f_2^{N+1}(z)) = 0$.
\end{proof}

\begin{lemma}[Translating $\Delta$] \label{lem:copying-lengths-along-f1'}
Suppose $G$ satisfies the axioms and $z$ is a vertex in $G$ with $f_1'(z)=t,f_1(z)=x,f_2(z)$ defined and $t \ne x$. By Axioms \ref{ax:basic1} and \ref{ax:f1primef2-square}, $f_1(t)$ and $f_2(t)$ are defined. Then for the $\{f_1,f_2\}$ edges at $z$ and at $t$, $\Delta_z = \Delta_t$.
\end{lemma}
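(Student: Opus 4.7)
My plan is to split the proof into the two coordinates of $\Delta = (\Delta\veps_1, \Delta\veps_2)$ separately. The first is immediate from the Kashiwara axioms applied to the primed operators; the second is the substantive part, where I plan to invoke the preceding Lemma~\ref{lem:copying-along-f1'}.

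First I would set up the geometric picture. Since $t = f_1'(z)$ and $x = f_1(z)$ are distinct, Axiom~\ref{ax:basic1} forces the $\{1,1'\}$-connected component through $z$ to be a separated string. In a separated string, $f_1$ and $f_1'$ commute wherever both are defined, so in particular $f_1'(x) = f_1(t)$, giving a ``primed square'' of $1$- and $1'$-edges among $z, t, x, f_1(t)$. Separately, Axiom~\ref{ax:f1primef2-square} applies at $z$ (its side hypothesis $f_1(z) = x \ne t = f_1'(z)$ holds) and produces $f_2(t) = f_1'(f_2(z))$ as an unprimed $f_2$-edge.

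For the first coordinate, the Kashiwara axioms applied to the primed operators (Axiom~\ref{ax:Kashiwara-B}) give $\veps_1(t) = \veps_1(z) + 1$ along $z \xrightarrow{1'} t$ and $\veps_1(f_2(t)) = \veps_1(f_2(z)) + 1$ along $f_2(z) \xrightarrow{1'} f_2(t)$. Subtracting yields $\veps_1(t) - \veps_1(f_2(t)) = \veps_1(z) - \veps_1(f_2(z))$, so $\Delta_z$ and $\Delta_t$ agree in the first coordinate.

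For the second coordinate, I want $\veps_2(z) - \veps_2(x) = \veps_2(t) - \veps_2(f_1(t))$. First, $f_2(x)$ is defined: since $\vphi_2(z) \ge 1$, Axiom~\ref{ax:length} applied to $z \xrightarrow{1} x$ gives $\vphi_2(x) \ge 1$. Then Axiom~\ref{ax:f1primef2-square} applies at $x$ (the side condition holds because $f_1(x) \ne f_1'(x)$: in the separated string $f_1(x)$ lies in the opposite chain from $f_1'(x) = f_1(t)$, or else $f_1(x) = \varnothing$), producing $f_2(f_1(t)) = f_1'(f_2(x))$ as an unprimed $f_2$-edge. Thus $\vphihat_2(t) \ge 1$ and $\vphihat_2(f_1(t)) \ge 1$. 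The contrapositive of Lemma~\ref{lem:copying-along-f1'} applied to each of the primed edges $z \xrightarrow{1'} t$ and $x \xrightarrow{1'} f_1(t)$ then forces $\vphi_2$ to differ at its endpoints; by Axiom~\ref{ax:length} this pins down case (i), so $\veps_2$ is preserved along each primed edge, giving $\veps_2(t) = \veps_2(z)$ and $\veps_2(f_1(t)) = \veps_2(x)$. Subtraction completes the proof. The one piece of real effort lies in extracting the parallel primed edge $x \xrightarrow{1'} f_1(t)$ from the separated-string structure of Axiom~\ref{ax:basic1} and verifying the side condition of \ref{ax:f1primef2-square} at $x$; once that primed square is available, the rest is a mechanical application of Lemma~\ref{lem:copying-along-f1'}.
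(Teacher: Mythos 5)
Your first coordinate argument and the step $\veps_2(t)=\veps_2(z)$ (via $f_2(t)$ defined, the contrapositive of Lemma~\ref{lem:copying-along-f1'}, and Axiom~\ref{ax:length}) agree with the paper's proof and are fine. The gap is your claim that $f_2(x)$ is defined because $\vphi_2(x)\ge 1$. Under Axiom~\ref{ax:basic1} this implication is false: $x$ may sit at the junction of a \emph{separated} $2/2'$-string (the bottom vertex of its upper solid chain), where $\vphi_2(x)=\vphi_2'(x)=1$, $\vphihat_2(x)=0$, and only $f_2'(x)$ is defined. Nothing in the hypotheses of the lemma rules this configuration out (the excluded-lengths Lemma~\ref{lem:excluded-lengths} does not apply, since $\vepshat_1(x)\ge 1$), and the conclusion of the lemma does not require $f_2(x)$ to exist, so you cannot get it for free. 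Since your application of Axiom~\ref{ax:f1primef2-square} at $x$, and hence the inequality $\vphihat_2(f_1(t))\ge 1$ needed for the second use of Lemma~\ref{lem:copying-along-f1'} along $x\xrightarrow{1'}f_1(t)$, rest entirely on $f_2(x)$ being defined, the whole second-coordinate argument collapses at this point; this is precisely the hard part of the lemma.

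For comparison, the paper never assumes $f_2(x)$ or $f_2(f_1(t))$ is defined a priori. After getting $\veps_2(t)=\veps_2(z)$, it first disposes of the case $\Delta\veps_2=1$ at $z$ by a monotonicity argument: $\veps_2$ weakly decreases along any $1$- or $1'$-edge, and $f_1(t)=f_1'(x)$, which forces $\veps_2(f_1(t))=\veps_2(t)-1$. This leaves only the mismatches $\Delta_z=(1,0),\Delta_t=(1,1)$ and $\Delta_z=(0,0),\Delta_t=(0,1)$, which are excluded using the merge axioms \emph{at $t$} (where $f_1'(t)=\varnothing$ is known, so \ref{ax:half-primed-square} and \ref{ax:square} are available): the half-primed square would force the $1$-string through $t$ to be collapsed via Lemma~\ref{lem:collapsed-adjacent-strings}(iii), a contradiction, while in the other case Axiom~\ref{ax:square} at $t$ is what \emph{produces} $f_2(f_1(t))$, after which the second application of Lemma~\ref{lem:copying-along-f1'} along $x\xrightarrow{1'}f_1(t)$ proceeds as in your endgame. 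If you want to salvage your outline, you would need to either prove that $f_2(x)$ is defined under the lemma's hypotheses (which I do not believe follows) or replace the \ref{ax:f1primef2-square}-at-$x$ step by this kind of case analysis at $t$.
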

\begin{proof}
By Axiom \ref{ax:f1primef2-square}, there is an $\{f_1',f_2\}$ square at $z$, from which $\Delta\varepsilon_1$ is the same along the $f_2$ edges at $t$ and at $z$. 

Since $f_2$ is defined at $t$, we have $\varphi_2(t)\neq \varphi_2(z)$ by Lemma \ref{lem:copying-along-f1'}.  Hence by Axiom \ref{ax:length} we have $\varepsilon_2(t) = \varepsilon_2(z)$.

If $\Delta\varepsilon_2 = 1$ at $z$, we get $\varepsilon_2(x) = \varepsilon_2(z) - 1$. Since $f_1(t) = f_1'(x)$ and $\varepsilon_2$ either decrements or stays the same along any $f_1$ and $f_1'$ edge, we must have $\varepsilon_2(f_1(t)) = \varepsilon_2(t) - 1$, so $\Delta\varepsilon_2 = 1$ at $t$.

We are left with the possibilities $\Delta_z = (1,0)$ but $\Delta_t = (1,1)$; and $\Delta_z = (0,0)$ but $\Delta_t = (0,1)$. In the first scenario, Axiom \ref{ax:half-primed-square} shows that $f_1'(f_2(t)) = f_1'(f_1'(f_2(z))$ is defined, so the $1$-string through $f_2(t)$ is collapsed; also, $\varphi_1(t) = \varphi_1(f_2(t))$ since $\Delta\varepsilon_1 = 1$ at $t$. But then the $1$-string through $t$ is collapsed by Lemma \ref{lem:collapsed-adjacent-strings}(iii), contradicting our hypotheses. In the second scenario, we obtain a square $f_1f_2(t) = f_2f_1(t)$ by Axiom \ref{ax:square}. In particular $f_2$ is defined at $r = f_1(t) = f_1'(x)$, so a final application of Lemma \ref{lem:copying-along-f1'} and Axiom \ref{ax:length} to the edge $x \xrightarrow{f_1'} r$ shows $\varepsilon_2(r) = \varepsilon_2(x)$. We have $\varepsilon_2(x) = \varepsilon_2(z)$ since $\Delta\varepsilon_2 = 0$ at $z$, and we know $\varepsilon_2(z) = \varepsilon_2(t)$, so $\Delta\varepsilon_2 = 0$ at $t$.
\end{proof}

\subsection{Existence of a unique highest weight element}

Let $G$ be a finite, nonnegatively weighted, edge-labeled directed graph as above.  Define a \textbf{highest weight element} of $G$ to be an element $g$ for which there are no incoming arrows pointing to $g$.  In other words, if $G$ satisfies the axioms, $g$ is highest weight if and only if $\veps_i(g)=0$ for all $i$.  Note that every element of $G$ is comparable to some highest weight element, since repeated applications of the $e_i$ operators will eventually terminate.

\begin{proposition} \label{prop:unique-maximum}
If $G$ is connected and satisfies the axioms 
, then $G$ has a unique highest weight element $g \in G$, whose weight $\wt(g)=(a_1,\ldots,a_n)$ satisfies $a_i\ge a_{i+1}$ for all $i$, and if $a_{i}>0$ then the inequality is strict, that is, $a_i>a_{i+1}$.
\end{proposition}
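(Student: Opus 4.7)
The plan is to prove existence of a highest weight element by termination, verify the strict partition condition using axiom \ref{ax:basic1}, and establish uniqueness via a confluence argument on the $e$-operations.

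First, the nonnegative integer statistic $S(v) := \sum_j j \cdot \wt_j(v)$ strictly decreases by $1$ under every $e_i$ or $e_i'$ operation (by \ref{ax:kashiwara1}, such an operation adds $\alpha_i$, changing $S$ by $-1$). Hence iterating $e$-operations from any vertex terminates at a highest weight $g \in G$. Let $\wt(g) = (a_1,\ldots,a_n)$. Axiom \ref{ax:kashiwara2} gives $\varphi_i(g) = a_i - a_{i+1} \geq 0$, so $a_i \geq a_{i+1}$. For strictness, suppose $a_{i+1} \geq 1$; I claim $a_i > a_{i+1}$. Since $g$ is at the top of its $\{i,i'\}$-string (as $\veps_i(g)=0$), axiom \ref{ax:basic1} rules out a collapsed string (whose top satisfies $\wt_{i+1}=0$), so the string is separated. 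The top of a separated string has at least one outgoing $i'$-edge, so $\varphi_i(g) \geq 1$, giving $a_i > a_{i+1}$. Combined with $a_i \geq a_{i+1}$, this shows $a_i > 0 \Rightarrow a_i > a_{i+1}$, as required.

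For uniqueness, I will show that every vertex $v \in G$ has a unique highest weight ancestor $r(v)$. Given this, for any edge $v \to w$ in $G$ the vertex $r(v)$ is itself an ancestor of $w$, so uniqueness forces $r(v) = r(w)$; hence $r$ is constant on the connected graph $G$, and any two highest weight elements $g, g'$ satisfy $g = r(g) = r(g') = g'$. Well-definedness of $r(v)$ is proved by strong induction on $S(v)$. If no $e$-operation is defined at $v$, then $v$ is highest weight and we set $r(v) := v$. Otherwise, for any pair of defined $e$-operations $x = e_i(v), y = e_j(v)$ (possibly primed, possibly $i = j$), the inductive hypothesis yields well-defined $r(x), r(y)$, and it suffices to exhibit a common further $e$-ancestor $u$ of $x$ and $y$: then $r(x) = r(u) = r(y)$ by the inductive hypothesis applied at $u$.

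The common $u$ is supplied by the axioms. If $|i-j| > 1$, axiom \ref{ax:basic2} gives commuting $e$-edges at $x$ and $y$, producing $u$ one step up. If $i = j$ (one primed, one unprimed), $e_i(v)$ and $e_i'(v)$ both lie in a single $\{i,i'\}$-string through $v$, whose unique top (as structured by axiom \ref{ax:basic1}) serves as $u$. If $|i-j| = 1$, one of the dual merge axioms \ref{ax:dual-primed-square}--\ref{ax:dual-stembridge} provides explicit $e$-sequences from $x$ and $y$ reaching a common $u$ (via a dual square, half-primed square, octagon, etc.). The main obstacle is verifying that \emph{some} dual merge axiom always applies in the $|i-j|=1$ case, since the axioms' hypotheses (involving $\Delta$, $\widehat{\varphi}_i$, $\varphi_i'$) overlap in intricate ways. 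Lemma \ref{lem:excluded-lengths}, applied in its $\eta$-dual form, rules out the single length configuration at which no dual axiom would otherwise apply, as flagged in Remark \ref{rmk:excluded-lengths}. Once that gap is closed, local confluence holds at every $v$ and the induction closes.
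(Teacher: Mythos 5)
Your existence argument (termination via the potential $S$), your derivation of the strict-partition property from \ref{ax:Kashiwara-B} and \ref{ax:basic1}, and your global scheme (induction on $S$, comparing the $r$-values of pairs of parents of $v$) are all sound, and the last is just a repackaging of the paper's maximal-counterexample argument; the cases $|i-j|>1$ and $i=j$ are handled exactly as in the paper. The genuine gap is in the step you yourself flag as ``the main obstacle'': it is \emph{not} true that for adjacent labels one of the dual merge axioms always supplies a common $e$-ancestor of the two parents, and Lemma \ref{lem:excluded-lengths} is not what closes most of the remaining cases.

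Concretely: (i) if $x=e_{i+1}(v)$ and $y=e_i(v)$ are both defined but $e_{i+1}'(v)$ is \emph{also} defined, none of \ref{ax:dual-half-primed-square}--\ref{ax:dual-stembridge} applies (they all presuppose $e_{i+1}'(v)=\varnothing$), and no axiom joins this pair directly; one must route through the third parent $t=e_{i+1}'(v)$, getting $r(x)=r(t)$ because $x$ and $t$ lie in one $\{i{+}1,(i{+}1)'\}$-string (Axiom \ref{ax:basic1}) and $r(t)=r(y)$ from \ref{ax:dual-f1primef2-square} or \ref{ax:dual-primed-square} -- a transitivity step your plan, which demands a direct common ancestor for \emph{every} pair, does not include (the paper handles this subcase by a separate argument using the uniqueness already known above $t$). (ii) When $e_{i+1}'(v)=\varnothing$ and $\Delta'=(0,0)$, the applicability of at least one of \ref{ax:dual-half-primed-stembridge}, \ref{ax:dual-stembridge} is not automatic, since their hypotheses are genuine length conditions; it is exactly the dual of Proposition \ref{prop:some-stembridge-applies}, which your sketch never invokes and which Lemma \ref{lem:excluded-lengths} does not replace. (iii) Lemma \ref{lem:excluded-lengths} enters only for the pair $\{e_i'(v),e_{i+1}(v)\}$ when $\widehat{\varphi}_{i+1}(v)=0$ (so that \ref{ax:dual-f1f2prime-square} gives no square), and even there only after first disposing of the subcases where $e_{i+1}'(v)$ or $e_i(v)$ is defined (again by routing through that extra parent); only when all of $\widehat{\varphi}_{i+1},\varepsilon'_{i+1},\widehat{\varepsilon}_i,\varphi'_i$ vanish does the lemma (whose hypothesis is self-dual, so no $\eta$-dualization is needed) show the configuration cannot occur. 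So the case analysis that constitutes the bulk of the paper's proof is not subsumed by your appeal to ``some dual merge axiom plus excluded lengths'': as written, the confluence step fails for the configurations in (i) and (iii), and (ii) rests on an unproved applicability claim.
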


\begin{proof}
For contradiction, let $w$ be maximal among vertices of $G$ that are comparable to two or more distinct highest weight elements. Then there are elements $x$ and $y$ with arrows $x \rightarrow w \leftarrow y$, for which each of $x$, $y$ comparable to a single highest weight element, say $m_x$ and $m_y$.  
We consider the possible cases, based on the types of the arrows $x \rightarrow w \leftarrow y$.

If the two arrows are labeled $i$ (or $i'$) and $j$ (or $j'$) where $|i-j|>1$, then by Axiom \ref{ax:basic2} the arrows commute up into a square, say with top vertex $z$.  Then $z$ is a common upper bound of $x$ and $y$, and so the unique highest weight elements $m_x$ above $x$ and $m_y$ above $y$ also are comparable to $z$.  This contradicts maximality of $w$.  Similarly, if $i=j$ then Axiom \ref{ax:basic1} shows that $x$ and $y$ have a common upper bound as well, and we get a contradiction.

It remains to consider the case when $i=j\pm 1$, and for simplicity we assume $i=1$ and $j=2$.  If the two arrows are $\{f_1', f_2'\}$ or $\{f_1, f_2'\}$, then the dual axioms \ref{ax:dual-primed-square} and \ref{ax:dual-f1primef2-square} imply the existence of a vertex $z$ with $x \leftarrow z \rightarrow y$ of the appropriate types of arrow.  In each case, we again have a contradiction.

If the two arrows are $\{f_1, f_2\}$, with $x=e_2(w)$ and $y=e_1(w)$, first suppose $t:=e_2'(w)$ is defined.  Then \ref{ax:dual-f1primef2-square} or \ref{ax:dual-primed-square} (depending on whether $e_1'(w)=x$) imply that $t$ and $y$ have a common upper bound $z$ forming a square with $t$, $y$, and $w$.  If $t=x$ we are done.  Otherwise, by Axiom \ref{ax:basic1} we have that $s:=e_2(t)$ is defined and $f_1'(s)=x$.  Then $s$ and $z$ must have a common upper bound since $t$ is higher than $w$ and therefore is comparable to a unique highest weight element.  Since $z$ covers $y$, it follows that $x$ and $y$ have a common upper bound.  

Now we can assume $e_2'(w)=\varnothing$. The cases $\Delta'=(1,1)$, $\Delta'=(0,1)$, and $\Delta'=(1,0)$ are now covered by axioms \ref{ax:dual-half-primed-square} and \ref{ax:dual-square}.  Finally, if $\Delta' = (0,0)$, then at least one of Axiom \ref{ax:dual-stembridge} or \ref{ax:dual-half-primed-stembridge} applies by (the dual of) Proposition \ref{prop:some-stembridge-applies}. 

Finally, suppose the two arrows are $\{f_1', f_2\}$. If $\vphihat_2(w) > 0$, then we get a commuting square as above by Axiom \ref{ax:dual-f1f2prime-square}. Next, suppose $e_2'(w)$ is defined. By the previous cases, $e_2'(w)$ has a common upper bound both with $x$ and with $y$, so again we are done. The same holds if $e_1(w)$ is defined.  Moreover, if $f_1'(w)$ is defined, then since $e_1'(w)=x$ is also defined, the $1/1'$ string through $w$ is collapsed, so $e_1(w)$ is defined and we are done as before.

Thus we can assume $\vphihat_2(w) = \veps'_2(w) = \vepshat_1(w) = \vphi'_1(w) = 0.$  By Lemma \ref{lem:excluded-lengths}, it follows that $x$ and $y$ do not exist, a contradiction.

We have shown that there is a unique maximum $g \in G$. To show that the entries of $\mathrm{wt}(g)$ are strictly decreasing, Axiom \ref{ax:Kashiwara-B} gives us
\[\varphi_i(g) - \veps_i(g) = \langle\mathrm{wt}(g), \alpha_i\rangle = \mathrm{wt}_i(g) - \mathrm{wt}_{i+1}(g)\]
for all $i$.  Moreover, we have $\veps_i(g)=0$ for all $i$ since $g$ is highest weight, and since $\varphi_i(g) \geq 0$ for all $i$ the weak inequality $\wt_i(g)\ge \wt_{i+1}(g)$ follows.  Moreover, suppose $\wt_i(g)>0$.   Then if $\wt_{i+1}(g)=0$ we have $\wt_i(g)>\wt_{i+1}(g)$. If instead $\wt_{i+1}(g)>0$, by Axiom \ref{ax:basic1} the $i/i'$ string through $g$ is separated since $g$ is at the top of this string, and it follows that $\varphi_i(g)>0$, and again we conclude the strict inequality $\wt_i(g)>\wt_{i+1}(g)$.
\end{proof}

\subsection{The proof of uniqueness}

We now show that the axioms uniquely determine the graph $G$.

\begin{proposition} \label{prop:uniquely-determined}
If $G$ and $G'$ are connected and satisfy the axioms, 
 and their unique highest weight elements $g, g'$ have the same weight $\lambda$, then $G$ is canonically isomorphic to $G'$.
\end{proposition}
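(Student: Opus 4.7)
The plan is to construct a canonical isomorphism $\phi\colon G \to G'$ by induction, sending $g \mapsto g'$ and matching $f$-edges on both sides. By Proposition \ref{prop:unique-maximum} and connectedness, every vertex $w \in G$ is reachable from $g$ by a sequence of $f_i$ and $f_i'$ operators, and the quantity
\[\rho(w) := \langle \wt(g) - \wt(w),\, (n,\,n{-}1,\,\ldots,\,1) \rangle\]
increases strictly by $1$ along every such edge (since $f_i$ shifts $\wt$ by $-\alpha_i$). Inducting on $\rho(w)$, set $\phi(g) = g'$ at the base case, and at $\rho(w) > 0$, pick any incoming edge $v \xrightarrow{a} w$ and tentatively set $\phi(w) := f_a(\phi(v))$. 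The crux is the well-definedness of this recipe.

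Suppose $v_1 \xrightarrow{a_1} w$ and $v_2 \xrightarrow{a_2} w$ are two distinct incoming edges. I would argue in parallel with the proof of Proposition \ref{prop:unique-maximum}: the dual axioms \ref{ax:dual-primed-square}--\ref{ax:dual-stembridge}, together with the basic axioms \ref{ax:basic1}, \ref{ax:basic2}, the dual form of Proposition \ref{prop:some-stembridge-applies}, and Lemma \ref{lem:excluded-lengths}, cover every possible pair of labels $(a_1,a_2)$ and every local-length configuration at $w$. In each case, the applicable axiom produces a common ancestor $\tilde w$ of $v_1, v_2$ together with a prescribed diamond or octagon diagram above $w$. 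Since $\rho(\tilde w) < \rho(v_1),\rho(v_2) < \rho(w)$, by induction $\phi(\tilde w), \phi(v_1), \phi(v_2)$ are defined and $\phi$ preserves the edges, weights, and hence the statistics $\varepsilon_i, \varphi_i, \widehat{\varepsilon}_i, \widehat{\varphi}_i, \varepsilon_i', \varphi_i'$ appearing in the axioms (these are determined by the $\{i,i'\}$-string type from \ref{ax:basic1}, the weight, and $\varepsilon_i$, via Definition \ref{def:stats} and \ref{ax:Kashiwara-B}). Consequently the \emph{same} downward merge axiom applies at $\phi(\tilde w)$ in $G'$, forcing $f_{a_1}(\phi(v_1)) = f_{a_2}(\phi(v_2))$, so $\phi(w)$ is well-defined.

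Once $\phi$ is well-defined, it preserves every edge, label, and weight by construction. A symmetric argument produces $\psi\colon G' \to G$ with $\psi(g') = g$, and the composites $\psi \circ \phi$ and $\phi \circ \psi$ fix $g, g'$ respectively and commute with all $f_a$-operators; by the same confluence, they are the identity on $G$ and $G'$. Hence $\phi$ is the desired canonical isomorphism.

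The main obstacle will be the case-by-case bookkeeping in the confluence step: for each of the eight dual merge axioms, one must verify that the local data at $\tilde w$ that triggers the chosen axiom in $G$ is genuinely preserved at $\phi(\tilde w)$ in $G'$. The subtlety is that conditions in the octagon axioms \ref{ax:half-primed-stembridge} and \ref{ax:stembridge} involve statistics such as $\widehat{\varphi}_i(f_{i+1}(\tilde w))$, measured at vertices with larger $\rho$-value than $\tilde w$, which are not yet in the domain of $\phi$ at the relevant inductive stage; one must instead recover these ``downstream'' statistics from data already available at $\tilde w$ by invoking \ref{ax:basic1} (to pin down the $\{i,i'\}$-string shape) together with \ref{ax:length} and \ref{ax:Kashiwara-B}. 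A complementary check is that Lemma \ref{lem:excluded-lengths} (in its dual form) truly precludes the degenerate situation at $w$ in which no dual merge axiom applies to its incoming edges, so that the induction never stalls.
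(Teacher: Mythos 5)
Your high-level strategy is the same as the paper's (build the isomorphism top-down from $g \mapsto g'$, extend it one level at a time, and prove well-definedness by a confluence argument at each vertex with two incoming edges), but the proposal assumes away exactly the part that constitutes the bulk of the paper's proof. Your central claim --- that whenever a dual merge axiom produces a common ancestor $\tilde w$ of $v_1, v_2$ in $G$, ``the \emph{same} downward merge axiom applies at $\phi(\tilde w)$ in $G'$'' --- is false in several of the cases, precisely because the downward axioms \ref{ax:square}, \ref{ax:half-primed-square}, \ref{ax:half-primed-stembridge}, \ref{ax:stembridge} carry the hypothesis $f_i'(\tilde w) = \varnothing$ at the top vertex, which need not hold even when the dual axiom fired at $w$. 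The paper's Step 4 handles the square case with $f_1'(\tilde w)$ defined by translating $\Delta$ along the primed edge (Lemma \ref{lem:copying-lengths-along-f1'}) and patching with Axiom \ref{ax:f1primef2-square}; Step 5 handles the octagon case with $f_1'(\tilde w) \ne \varnothing$ by a long construction of auxiliary vertices; and in the half-primed octagon case (Step 6) there is \emph{no} downward axiom producing the relation $f_2 f_1' f_1 f_2(\tilde w) = f_1' f_2^2 f_1(\tilde w) = w$ from conditions at $\tilde w$, so the paper abandons the ``push the diagram down in $G'$'' idea entirely and instead argues in $G'$ at $\tilde w = f_1(T(y))$, exhausting which dual axiom can hold there and transporting back via $T^{-1}$. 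None of these obstacles, nor the preliminary Lemmas \ref{lem:collapsed-adjacent-strings}--\ref{lem:copying-lengths-along-f1'} and the reduction of Step 3 (the case when $e_2'(w)$ is defined, which the dual octagon/square axioms exclude), appear in your plan, so the confluence step is asserted rather than proved.

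A second concrete error: you justify transporting the axioms' hypotheses to $G'$ by claiming the statistics $\widehat\varepsilon_i, \varepsilon_i', \widehat\varphi_i, \varphi_i'$ are ``determined by the $\{i,i'\}$-string type from \ref{ax:basic1}, the weight, and $\varepsilon_i$.'' This is not true: in a separated string, the vertex at distance $j$ from the top on the upper (unprimed) chain and the vertex at distance $j$ reached by descending $j-1$ unprimed steps and one primed step have the same weight, the same $\varepsilon_i$, and the same $\varphi_i$, yet one has $\varepsilon_i' = 0$ and the other $\varepsilon_i' = 1$. The paper avoids this by making preservation of \emph{all} refined statistics part of the inductive hypothesis and proving it separately (Step 8: the $i$-lengths of a new vertex are inherited from its parent along an $i$- or $i'$-edge, far labels via \ref{ax:basic2}, and a special argument via \ref{ax:length} and \ref{ax:basic1} when every incoming edge has the same numerical label). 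Relatedly, the subtlety you do flag (statistics like $\widehat\varphi_i(f_{i+1}(\tilde w))$ at ``downstream'' vertices) is not actually a problem in this induction --- those intermediate vertices lie strictly above $w$ and are already in the domain --- whereas the real difficulties listed above are left untouched. So the skeleton is right, but the proof as proposed has a genuine gap at its core step.
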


\begin{proof}
Let $G_{\leq r}$ denote the subset of $G$ that can be reached from $g$ using at most $r$ steps, and $G_r$ the subset reachable in exactly $r$ downwards steps. Let $G'_{\leq r}, G'_r$ be defined similarly from $g'$. We claim that there is a unique isomorphism $T_{\leq r} : G_{\leq r} \to G'_{\leq r}$ of weighted, edge-labeled graphs, which moreover preserves the quantities $\vepshat_i, \veps'_i, \vphihat_i, \vphi'_i$ (and therefore $\veps_i, \vphi_i$ as well) for all $i$.  Since $G, G'$ are finite, we will obtain an isomorphism $T:G\to G'$ by taking $r \gg 0$.

We proceed by induction. For $r=0$, we set $T(g) = g'$. The $\veps, \varphi$ values are preserved due to Axiom \ref{ax:Kashiwara-B} (since $\veps_i(g) = \veps_i(g') = 0$ for all $i$ and $\mathrm{wt}(g) = \mathrm{wt}(g')$).  For the statistics $\veps',\varphi', \vepshat, \widehat{\varphi}$, Axiom \ref{ax:basic1} tells us whether the strings are collapsed or separated based on the weight, and so these statistics are also equal at $g$ and $g'$.

Now consider $r \geq 1$.  We will construct $T_{\leq r}$ from $T_{\leq r-1}$, and then check that it preserves lengths.  By abuse of notation, we will often write $T$ for $T_{\leq r}$.

 \textbf{Step 1: Setup and reduction to two labels.} First, since $T_{\leq r-1}$ preserves lengths, we see that $G_{r-1}$ has the same set of downwards-pointing edges as $G'_{r-1}$. To construct $T_{\leq r}$, it suffices to show that whenever two such edges have the same target in $G$, their isomorphic images have the same target in $G'$. So, suppose we have $x \rightarrow w \leftarrow y$ in $G$, with $x, y \in G_{r-1}$ and $w \in G_r$.

  If the two edges are labeled $f_{i}$ (or $f_{i}'$) and $f_j$ (or $f_{j}'$) for either $|i-j|>1$ or $i=j$, then Axioms \ref{ax:basic1} and \ref{ax:basic2} show that we can `complete the square,' obtaining $z \in G_{r-2}$ with edges $x \leftarrow z \rightarrow y$.  Then applying these axioms to $T(z),T(x),T(y)$ allows us to complete the square in $G'$ to find an entry $T(w)$.  
  
  We have now reduced to the case that the two arrows have indices $i$ (or $i'$) and $j$ (or $j'$) with $|i-j|=1$.  For simplicity we assume $i=1$ and $j=2$ in the remainder of the proof.
 
 \textbf{Step 2: The cases $\{f_1',f_2'\}$ and $\{f_1,f_2'\}$.}    If the arrows are $\{f'_1,f'_2\}$, Axiom \ref{ax:dual-primed-square} allows us to complete the square as above, again defining the unique image $T(w)$ in $G'$.

  If the two edges are $\{f_1, f_2'\}$, with $f_1(y)=f_2'(x)=w$, the situation is similar, with a few subtleties.   First, if $f_1'(y)=f_1(y)=w$, we can form a primed square to obtain $z$ and then form a primed square again from $T(z)$ in $G'$ to obtain $T(w)$ (using Axioms \ref{ax:dual-primed-square} and \ref{ax:primed-square}).  Since the values of $\widehat{\vphi}_1,\vphi_1',\vphi_1$ are preserved under $T_{\le r-1}$, we also have that $\vphihat_1(T(y))=\vphi'_1(T(y))\ge 1$ and so there is also a collapsed edge at $T(y)$, with $f_1(T(y))=w$.  Otherwise, if $f_1'(y)\neq f_1(y)$, then Axiom \ref{ax:dual-f1primef2-square} applies to yield $z$ as above. Then by Axiom \ref{ax:f1f2prime-square} we must have $\vepshat_1(z) > 0$, since the appropriate square exists at $z$. Since $T_{\leq r-1}$ preserves lengths, we get $\vepshat_1(T(z)) > 0$, so we may complete the square in $G'$.

 \textbf{Step 3: Reducing to $e_2'(w)=\varnothing$ in the case $\{f_1,f_2\}$.} If the two edges are $\{f_1, f_2\}$, write $w = f_1(y) = f_2(x)$ and let \[\Delta'_w = (\varphi_2(w) - \varphi_2(y),\varphi_1(w) - \varphi_1(x)).\]  

First suppose that $e_2'(w)$ is defined.  If $e_2'(w)=e_2(w)=x$ then using Axiom \ref{ax:dual-f1primef2-square} and \ref{ax:f1f2prime-square}, or \ref{ax:dual-primed-square} and \ref{ax:primed-square} if $e_1'(w)=y$, we can extend our bijection as above.   If instead $e_2'(w)\neq e_2(w)$, let $t=e_2'(w)$. Then by the arguments in Step 1 and Step 2, the $f_2'$ arrow from $T(t)$ agrees with the arrows from $T(x)$ and $T(y)$, so those two agree with each other. 

  We can now assume that $e_2'(w)=\varnothing$, as required for the axioms \ref{ax:dual-half-primed-square}-\ref{ax:dual-stembridge}.  We consider several cases based on the value of $\Delta'$. 

\textbf{Step 4: The case $\{f_1,f_2\}$ for $\Delta'_w\neq (0,0)$.}   If $\Delta'_w = (1,1)$, we obtain a dual half-primed square, say with top element $z$, from Axiom \ref{ax:dual-half-primed-square}. Then $f_1(f_2'(z))=f_2(f_1'(z))$, forming a half-solid square at $z$.  By the induction hypothesis, the conditions for Axiom \ref{ax:half-solid-square} for a half-solid square also hold at $T(z)$ in $G'$, allowing us to complete the square and obtain $T(w)$ in $G'$.

Suppose $\Delta_w' = (0,1)$ or $(1,0)$, so that there exists a vertex $z = e_2(y) = e_1(x)$. We first see $\Delta_z = \Delta_w'$ by applying Axiom \ref{ax:length} to the lengths in the square.  If $f_1'(z)$ is not defined, since $\Delta_z = \Delta_w'=(1,0)$ or $(0,1)$, the same holds at $T(z)$ and we may complete the square in $G'$. If $f_1'(z) = t \ne f_1(z)$, then in $G'$, we have $\Delta_{T(z)} = (0,1)$ or $(1,0)$, hence the same holds at $T(t)$ by Lemma \ref{lem:copying-lengths-along-f1'}. In particular, $f_1(T(t))$ is defined by Axiom \ref{ax:basic1} and $f_2(T(t))$ is defined by Axiom \ref{ax:f1primef2-square} applied to $z$, so $v:=f_1f_2(T(t))= f_2f_1(T(t))$ by Axiom \ref{ax:square}.  Note also that $v=f_1f_1'(T(y))=f_1'f_1(T(y))$, so we now may apply Axiom \ref{ax:f1primef2-square} at $T(x)$ to fill in the missing edge $T(x) \xrightarrow{1} f_2f_1(T(z))$ in $G'$.  (See Figure \ref{fig:Delta10}.)

We are left with the case $f_1'(z) = f_1(z)=x$. Lemma \ref{lem:collapsed-adjacent-strings}(iii) shows the $1$-string through $y$ is also collapsed and $\varphi_1(z) = \varphi_1(y)$. Thus we must be in the case $\Delta_z = \Delta_w = (1,0)$, and so $\varphi_2(w) = \varphi_2(y)+1$. Since the $2$-string through $w$ is separated, so is the $2$-string through $y$ by Lemma \ref{lem:collapsed-adjacent-strings}(i). The corresponding conditions hold for the edges $T(x) \xleftarrow{1=1'} T(z) \xrightarrow{2} T(y)$, so Axiom \ref{ax:f1primef2-square} yields an $\{f_1',f_2\}$ square with collapsed $1$-edges at $T(z)$. In particular $f_1f_2(T(z)) = f_2f_1(T(z))$.



\begin{figure}
\begin{center}
\includegraphics[scale=1]{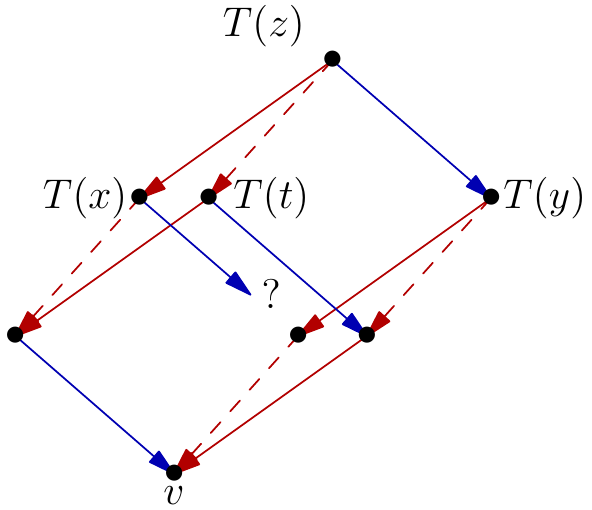}
\end{center}
\caption{\label{fig:Delta10} The construction of $T(w)$ using the auxiliary element $v$ in Step 4.}
\end{figure}

\textbf{Step 5: The case $\{f_1,f_2\}$ for $\Delta'_w=(0,0)$, subcase \ref{ax:dual-stembridge}.} Suppose $\Delta_w' = (0,0)$.  By (the dual of) Proposition \ref{prop:some-stembridge-applies}, at least one of Axiom \ref{ax:dual-half-primed-stembridge} or \ref{ax:dual-stembridge} applies at $w$. We consider \ref{ax:dual-stembridge} first. 

Let $z$ be the top element of the dual octagon shape from Axiom \ref{ax:dual-stembridge} at $w$.  If $f_1'(z)=\varnothing$ then by Axiom \ref{ax:stembridge},
\[\Delta_z := (\veps_1(z)-\veps_1(f_2(z)),\veps_2(z)-\veps_2(f_1(z))) = (0,0) \text{ and } \widehat{\varphi}_1(f_2(z)) \geq 2.\] These conditions hold at $T(z)$ by induction and we're done. 

Otherwise, suppose $f_1'(z)\neq \varnothing$.  Then an octagon pattern holds at $T(z)$ except possibly with the last two arrows from $T(x)$ and $T(y)$ pointing to different vertices (see Figure \ref{fig:special-stembridge-case}).

Let $a=f_1(T(z))$ and $b=f_2(T(z))$.  First assume for contradiction that $f_1'(T(z))=f_1(T(z))=a$.  Then if $f_2'(T(z))=b$ we have a primed square at $T(z),a,b,c$ where $c=f_1'(b)=f_2'(a)$ by Axiom \ref{ax:primed-square}, and if $f_2'(T(z))\neq b$ we use Axiom \ref{ax:f1primef2-square} to get $c=f_1'(b)=f_2(a)$. In the latter case, since $\widehat{\varphi}_1(b)\ge 2$ we have $\varphi_1(b)\ge 3$ and so $\varphi_1(T(z))\ge 2$ by the lengths axiom.  Thus by using Axiom \ref{ax:primed-square} or \ref{ax:f1primef2-square} again at $a$, we see that $f_1'(c)$ is defined, which is impossible by Axiom \ref{ax:basic1}.  And in the former case, we have either $c\neq f_1(b)$ or $c\neq f_2(a)$, and in either case a similar argument shows either $f_1'(c)$ or $f_2'(c)$ is defined, again a contradiction to Axiom \ref{ax:basic1}.  It follows that $f_1'(T(z))\neq f_1(T(z))$.

Now, let $t=f_1'(T(z))\neq a$.  By Axiom \ref{ax:basic1}, $s=f_1'(a)=f_1(t)$ is defined.  Then since $t\neq a$, Axiom \ref{ax:f1primef2-square} implies that $t$, $T(z)$, and $f_2(T(z))$ meet in an $\{f_1',f_2\}$ square at a vertex $c=f_1'(f_2(T(z)))$, so $f_1'$ is defined at $b$.  Let $d=f_2(a)$.  By Axiom \ref{ax:primed-square} or \ref{ax:f1primef2-square}, vertices $a,s,d$ form a square with a vertex $v=f_1'(d)\neq T(x)$ (since either $T(x)=f_1'(c)$, if the $1/1'$-string through $b$ is collapsed, or $e_1'(x)=\varnothing$, if it is separated).  The same argument shows there is a square at $d,v,T(y)$ with bottom vertex $p=f_1'(T(y))$.  Note that whether we used \ref{ax:primed-square} or \ref{ax:f1primef2-square} to form both squares, we have $f_2(t)=p$, for if we used primed squares, the length-two $f_2'$-string $s\to v\to p$ must be collapsed. Thus $p\neq f_2(T(x))$. 

Notice that the same diagram can be drawn in $G$ to a point $p_0$ corresponding to $p$, and in $G$ we can further conclude that $p_0\neq f_2(x)=w=f_1(y)$.  Thus $f_1'(y)\neq f_1(y)$ in $G$, so the $1$-string through $y$ is separated.  Hence the $1$-string through $T(y)$ is separated as well.  It follows that $\vphi_1(w)\ge 1$, and so $\vphi_1(x)=\vphi_1(T(x))\ge 1$ as well since $\Delta'_w=(0,0)$.  Thus $\vphi_1(b)\ge 3$, and in particular, $\vphi_1(c)\ge 2$.  (See Figure \ref{fig:special-stembridge-case} for an illustration of the vertices $s,v,p$ as well as the extended $1/1'$ string through $b$.)

Recall that $f_2(t)=c$ from our construction above.  Then $t\to c$ and $t\to s$ are two solid edges.  
By Lemma \ref{lem:copying-lengths-along-f1'}, $\Delta_t=\Delta_z=(0,0)$, so arrows $c\leftarrow t \rightarrow s$ meet in an octagon or half-primed octagon pattern, say with bottom element $q$.  It follows that the $1/1'$-chain from $T(y)$ connects up at $q$, that is, $f_1(p)=q$.  By one more application of Axiom \ref{ax:f1primef2-square} at $T(x)$ we see that $f_2(T(x))=f_1(T(y))$ as desired.

\begin{figure}
\begin{center}
\includegraphics[scale=0.75]{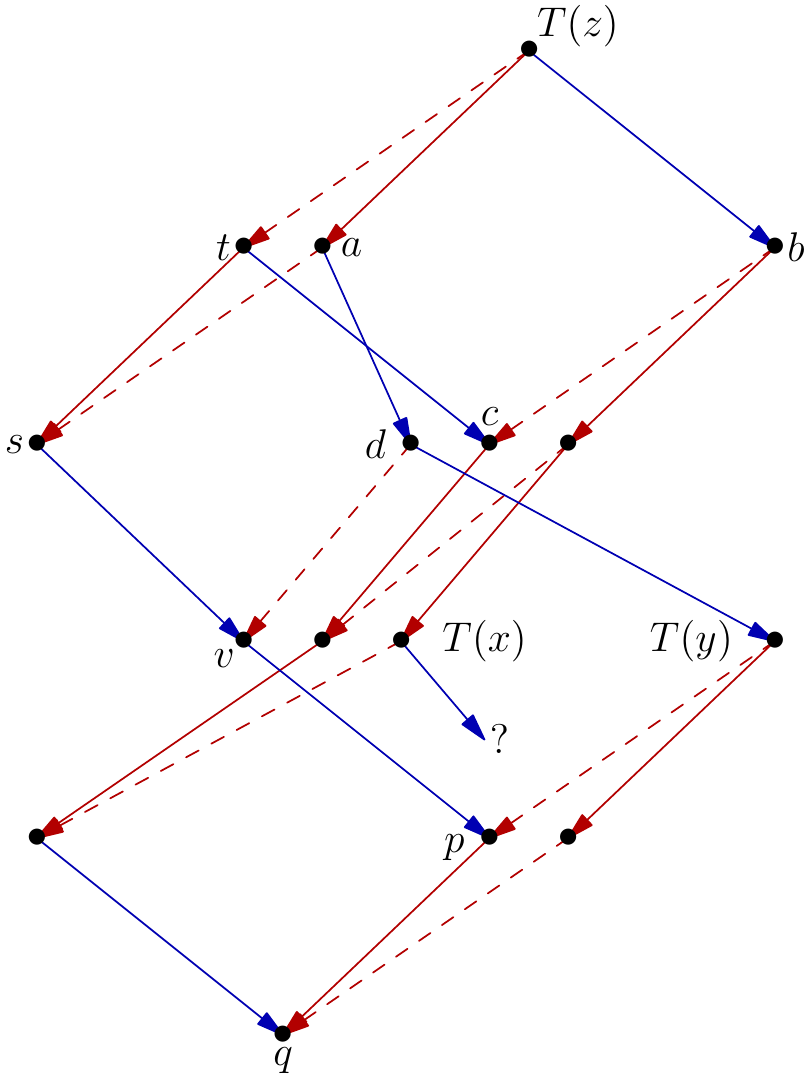}
\end{center}
\caption{\label{fig:special-stembridge-case} Illustration of the proof in Step 5 involving Axiom \ref{ax:stembridge}.}
\end{figure}

\textbf{Step 6: The case $\{f_1,f_2\}$ for $\Delta'_w=(0,0)$, subcase \ref{ax:dual-half-primed-stembridge}.}  Next, consider the case $\Delta_w = (0,0)$ where Axiom \ref{ax:dual-half-primed-stembridge} applies, but where we are not in Step 5. We use a different method from the one used Steps 1-5 here, since there is no axiom to yield a relation $f_2f_1^2f_2'(z)= w = f_1f_2'f_2f_1(z).$

Note that by the induction hypothesis, since $f_1(y)=w$ and $f_2(x)=w$ we know that both $f_2(T(x))$ and $f_1(T(y))$ are defined, and we define $\tilde{w} = f_1(T(y))$.  We wish to show that $f_2(T(x))=\tilde{w}$ as well.

If $e_2'(w)$ is defined, we are done by the steps above. Also, if $e_2'(\tilde w)$ is defined, so is $e_2'(w)$ by reversing the roles of $G$ and $G'$. So we can assume that $e_2'(\tilde w)$ is not defined. 

We now show that $e_2(\tilde w)$ is defined.  Since \ref{ax:dual-half-primed-stembridge} holds at $w$ but not \ref{ax:dual-stembridge}, we have that $\vepshat_2(y)\le 1$ (by the negation of the condition for Axiom \ref{ax:dual-stembridge}).  So in fact $\vepshat_2(y)=1$, and so the $2/2'$-string through $y$ (and hence $T(y)$) is separated and $\veps_2(y) = \veps_2(T(y)) = 2$.  Hence $\veps_2(\tilde w)\ge 1$, by the lengths axiom.  Since $e_2'(\tilde w)$ is not defined, we see that $e_2(\tilde w)$ is defined.

Now, since $e_2(\tilde w)$ is defined and $e_2'(\tilde w)$ is not, then one of Axioms \ref{ax:dual-half-primed-square}-\ref{ax:dual-stembridge} holds at $\tilde{w}$ in $G'$.  If this axiom is not \ref{ax:dual-half-primed-stembridge}, then applying $T^{-1}$ we see that the same dual axiom holds in $G$ at $w$, connecting through $x$ and $y$ by our above arguments, and we have a contradiction.  Thus Axiom \ref{ax:dual-half-primed-stembridge} holds at $\tilde w$, and by following the arrows upwards from $T(y)$ and then downwards, it must pass through $T(x)$ as well.  Thus $f_1(T(x))=\tilde w$, as desired.

 \textbf{Step 7: The case $\{f_1',f_2\}$.} Finally, suppose the two arrows from $y$ and $x$ are $\{f_1', f_2\}$. We may assume that no $f_1$ or $f_2'$ arrow points at $w$: otherwise, by the cases already established, the (isomorphic image of the) additional arrow agrees with both (the images of) $f_1'$ and $f_2$, hence the $f_1'$ and $f_2$ agree with each other. Likewise if either or both arrows are collapsed, we are done by the previous cases. Note that both strings through $w$ are therefore separated, so $e_1'(w) = \varnothing$. Next, if $\widehat{\varphi}_2(w) > 0$, Axiom \ref{ax:dual-f1primef2-square} yields $e_2'(e_1(w))= t = e_1(e_2'(w))$ in $G$. We can't have $f_2'(t) = f_2(t)$ since Axiom \ref{ax:primed-square} would imply $w = f_2'(x) = f_2(x)$, so Axiom \ref{ax:f1primef2-square} applies at $t$, and therefore at $T(t) \in G'$, completing the square in $G'$.

We are left with the case $\widehat{\varphi}_2(w) = \veps'_2(w) = \vepshat_1(w) = \varphi'_1(w) = 0$. By Lemma \ref{lem:excluded-lengths} (excluded lengths), no such $w$ exists, so we're done.

 \textbf{Step 8: Showing $T$ preserves string lengths.}  We have constructed the map of graphs $T : G_{\leq r} \to G'_{\leq r}$. It remains to show that $T$ preserves all string lengths. Let $w \in G_r$. There is some edge $z \xrightarrow{i \text{ or } i'} w$, where $z \in G_{r-1}$. Then the $i$-lengths of $w$ are determined by those of $z$. By induction, $z$ and $T(z)$ have all the same string lengths, so we see that $T(w)$ has the same $i$ lengths as $w$.  Moreover, by Axiom \ref{ax:basic2}, if $j$ is such that $|i-j| > 1$ then the $j$-lengths at $w$ are the same as those at $z$, and so these values are determined as well.

Thus we are done unless all edges pointing at $w$ have the same numerical label, say $i=1$. 
For simplicity say there is a $1$ or $1'$ arrow into $w$ but no $2$ or $2'$ arrow.  In this case $w$ is the top element of its $2/2'$-string, so $\vepshat_2(w) = \veps'_2(w) = 0$
, and likewise for $T(w)$.  Moreover, since $w$ is at the top of its string, we can tell whether $\varphi'_2(w)=1$ or $\varphi'_2(w)=\varphi_2(w)$ by examining the weight of $w$, by Axiom \ref{ax:basic1}.  It thus suffices to show $\varphi_2(w) = \varphi_2(T(w))$. 
We apply Axiom \ref{ax:length}: either $z$ is the top of its own $f_2/f_2'$ string and $\varphi_2(w) = \varphi_2(z) + 1$; or $z$ is one step below the top of its $f_2/f_2'$ string (that is, $\veps_2(z) = 1$) and $\varphi_2(w) = \varphi_2(z)$. Either way, the same condition ($\veps_2 = 0$ or $1$) holds for $T(z)$, so we deduce $\varphi_2(w) = \varphi_2(T(w))$, as desired.
\end{proof}


\section{Shifted tableaux, operators, and the basic axioms} \label{sec:tableaux}

We now recall some additional definitions and results on shifted tableaux and the operators defined in \cite{GLP}, in order to prove that the stated axioms hold for these crystal-like structures.  We first recall the definition of standardization for shifted tableaux.

\begin{figure}[b]
  \begin{center}
    \includegraphics{shifted-semistandard.pdf}  \hspace{3cm} \includegraphics{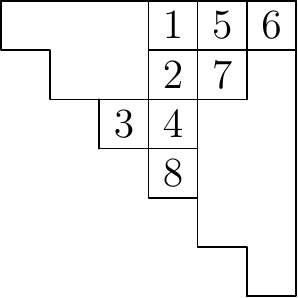}
  \end{center}
  \caption{\label{fig:standardization} A semistandard shifted tableau and its standardization.}
\end{figure}
  
  \begin{definition}
  The \textbf{standardization} of a word $w$, or tableau $T$, is the word $\mathrm{std}(w)$, or tableau $\mathrm{std}(T)$ formed by replacing the letters in order with $1,2,\ldots,n$ from least to greatest, breaking ties by reading order for unprimed entries and by reverse reading order for primed entries.  (see Figure \ref{fig:standardization}).
  \end{definition}

Recall that we normally require that a word or tableau is in \textbf{canonical form}, that is, the leftmost $i$ or $i'$ in reading order is unprimed for all $i$.  Notice, however, that priming the first $i$ in such a word or tableau does not change its standardization.  We think of a word (or tableau) as an equivalence class of strings (or generalized tableaux) consisting of all the words (or tableaux) formed by either priming or unpriming the first $i$ or $i'$ for each $i$.  Such a way of priming the entries is called a \textbf{representative} of the word.  

\subsection{Raising and lowering operators on shifted tableaux}
  
As with ordinary tableaux, the weight raising and lowering operators can be defined in terms of the reading word.  In \cite{GLP}, four such classes of operators are defined, namely $E'_i,F'_i,E_i,F_i$.  While the original definitions are recursive in terms of shifted jeu de taquin, we use here the main results of \cite{GLP}, namely the local rules for the operators, as their definitions.

Let $\alpha_i=(0,0,\ldots,0,1,-1,0,\ldots,0)$ be the tuple consisting of a $1$ in the $i$-th position and $-1$ in the $i+1$-st position, that is, the $i$-th standard type A simple root.  The primed operators $E'_i$ and $F'_i$ may be defined in terms of standardization as follows.

\begin{definition} \label{def:primed-operators}
We define $E_i'(w)$ to be the unique word such that
\[\mathrm{std}(E_i'(w)) = \mathrm{std}(w) \hspace{0.5cm}\text{ and }\hspace{0.5cm} \mathrm{wt}(E_i'(w)) = \mathrm{wt}(w) + \alpha_i,\]
if such a word exists; otherwise, $E_i'(w) = \varnothing$. We define $F_i'(w)$ analogously using $-\alpha_i$.
\end{definition}

The following proposition gives a more explicit way of computing the primed operators \cite{GLP}:

\begin{proposition}\label{prop:explicit-definitions-primed}
To compute $F_i'(w)$ consider all representatives of $w$. If all representatives have the property that the last $i$ is left of the last $(i+1)'$ then $F'(w) = \varnothing$.  If there exists a representative such that the last $i$ is right of the last $(i+1)'$ then $F'(w)$ is obtained, using this representative, by changing the last $i$ to $(i+1)'$.

The word $E'(w)$ is defined similarly with the roles of $i$ and $(i+1)'$ reversed: if the last $(i+1)'$ is right of the last $i$ in some representative, change it to a $i$ (in that representative). Otherwise $E'(w) = \varnothing$.
\end{proposition}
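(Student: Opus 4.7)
The plan is to deduce the explicit description from Definition~\ref{def:primed-operators}, which characterizes $F_i'(w)$ as the (equivalence class of a) unique word with $\std(F_i'(w)) = \std(w)$ and $\wt(F_i'(w)) = \wt(w) - \alpha_i$; the statement for $E_i'(w)$ then follows by a symmetric argument. The first step is to analyze the standardization rule on the letters of values $i$ and $i+1$. The positions occupied by such letters receive a contiguous block of consecutive standard labels, assigned in four passes: first $i'$-positions in reverse reading order, then $i$-positions in reading order, then $(i+1)'$-positions in reverse reading order, and finally $(i+1)$-positions in reading order. In particular the ``pivot'' label $V := \sum_{j \leq i} \wt_j(w)$, the largest inside the $i$-letter range, is held at a specific position $P$, namely the rightmost unprimed $i$ in reading order in the canonical representative (assuming $\wt_i(w) \geq 1$; else $F_i'(w) = \varnothing$ trivially).

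Next, I would observe that any candidate word $w'$ with $\std(w') = \std(w)$ and $\wt(w') = \wt(w) - \alpha_i$ agrees with $w$ at every position outside the combined $i$-and-$(i+1)$-letter range, and the primed/unprimed split at the other positions in that range is forced by the letter counts (up to the standard representative equivalence). Only the pivot $P$ switches from an $i$-letter (in $w$) to an $(i+1)$-letter (in $w'$), which reduces the construction of $F_i'$ to modifying a single letter at $P$.

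To verify the construction, suppose some representative of $w$ has its last $i$ right (in reading order) of its last $(i+1)'$. The last $i$ of this representative is necessarily at $P$ (since its label is $V$), and replacing it by $(i+1)'$ yields a word in which the new $(i+1)'$ at $P$ is rightmost among all $(i+1)'$'s. Hence $P$ receives the smallest $(i+1)$-range label in the new standardization, which equals $V$, matching the original label at $P$; all other labels are unchanged, and the weight decreases by $\alpha_i$ by inspection. Independence of the resulting equivalence class from the chosen representative then follows because the toggle operations act only on the leftmost $i$-letter and leftmost $(i+1)$-letter, neither of which is involved in the modification at $P$ under the hypothesis.

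The main obstacle is the converse: showing that if no representative of $w$ satisfies the condition, then no valid $w'$ exists. I plan to argue by contradiction. Suppose $w'$ exists and inspect the letter at $P$ in $w'$. If it is $(i+1)'$, the standardization rule forces $P$ to be the rightmost $(i+1)'$ of $w'$, hence right of every $(i+1)'$-position of $w$ (those positions coincide with the $(i+1)'$-positions of $w'$ except at $P$), so the canonical representative of $w$ already satisfies the condition --- a contradiction. If instead $P$ holds an unprimed $i+1$, then $V$, the smallest $(i+1)$-range label, is attained at an unprimed position, which forces $w'$ (and hence $w$) to have no $(i+1)'$'s at all, so the condition is vacuously true in the canonical form --- again a contradiction. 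The analogous explicit description of $E_i'(w)$ follows by the symmetric argument with the roles of $i$ and $(i+1)'$ exchanged.
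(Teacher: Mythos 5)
The paper itself contains no proof of this proposition: it is recalled from \cite{GLP} (``The following proposition gives a more explicit way of computing the primed operators''), so there is no internal argument to compare yours against. Judged on its own, your derivation from Definition~\ref{def:primed-operators} is the natural route and is essentially correct: the analysis of how standardization distributes labels over the block $\{i',i,(i{+}1)',i{+}1\}$, the identification of the pivot position $P$ carrying the label $V=\sum_{j\le i}\wt_j(w)$, the verification that replacing the last $i$ by $(i+1)'$ in a qualifying representative leaves $\std$ unchanged and shifts $\wt$ by $-\alpha_i$, and the two-case contradiction for the converse are all sound. Note also that representative-independence in the forward direction is automatic from the uniqueness built into Definition~\ref{def:primed-operators} (any qualifying representative produces a word with the right standardization and weight, hence produces $F_i'(w)$); your separate ``toggle'' justification is unnecessary and slightly off when $P$ happens to be the leftmost $i$-letter.

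Two points deserve tightening. First, the claim in your second step that the primed/unprimed split at the remaining positions is ``forced by the letter counts (up to the standard representative equivalence)'' is really forced by the labeling, and it is the load-bearing lemma for both your Step~2 and your converse: a position other than the first of its class is primed if and only if its label is smaller than the label of the first position of that class, so two splits consistent with the same standardization differ at most in the first letter of the class. This should be stated and proved explicitly (it is also exactly what makes ``the unique word'' in Definition~\ref{def:primed-operators} well posed). Second, in your converse case where $P$ carries a $(i+1)'$ in $w'$, the parenthetical that the $(i+1)'$-positions of $w$ coincide with those of $w'$ minus $P$ needs the subcase in which $P$ precedes every other $(i+1)$-letter to be handled separately: there $P$ is the first letter of its class in $w'$, the above lemma gives that all other $(i+1)$-letters of $w'$ are unprimed with increasing labels, hence the canonical form of $w$ has no $(i+1)'$ at all, and the contradiction is the vacuous one exactly as in your second case. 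With these two clarifications the argument is complete.
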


To define the unprimed operators $E_i$ and $F_i$, we first require the notion of the $i$-th \textbf{lattice walk} of a word $w$.  For $i=1$, we consider the subword of $w$ consisting of the letters $1,1',2,2'$.  The walk then begins at the origin and converts each letter of this subword to a unit step in a specific cardinal direction, as defined and illustrated in Figure \ref{fig:lattice-walk-intro}.  The $i$-th walk is obtained in the same way with the letters $i,i',i+1,(i+1)'$ occurring in $w$, where the cardinal directions are assigned as before but replacing $1$ with $i$ and $2$ with $i+1$.

\begin{figure}
\begin{center}
\begin{tabular}{cc}
\setlength{\unitlength}{3em}
\ \ \begin{picture}(5,3)(0,0)
\multiput(0,0)(0,0.2){16}{\line(0,1){0.1}}
\multiput(0,0)(0.2,0){17}{\line(1,0){0.1}}
\put(0,1.2){\circle*{0.1}}
\put(0,1.2){\vector(0,1){.7}\vector(1,0){.7}}
\put(-0.265,1.95){\small{$2$,$2'$}}
\put(0.4,1.3){\small{$1$,$1'$}}%
\put(1.4,0){\circle*{0.1}}
\put(1.4,0){\vector(0,1){.7}\vector(1,0){.7}}
\put(1.5,0.6){\small{2,2'}}
\put(2.0,0.1){\small{1,1'}}%
\put(2.8,2.2){\circle*{0.1}}
\put(2.8,2.2){\vector(0,1){.7}\vector(1,0){.7}}
\put(2.8,2.2){\vector(0,-1){.7}}
\put(2.8,2.2){\vector(-1,0){.7}}
\put(2.7,2.95){\small{$2$}}
\put(3.5,2.1){\small{$1'$}}
\put(2.7,1.2){\small{$1$}}
\put(1.85,2.1){\small{$2'$}}
\end{picture}
&
\ \
\begin{picture}(3,3)(0,0)
\setlength{\unitlength}{3em}
\stepnorth{2}{%
\stepeast{1}{%
\stepeast{1'}{%
\stepsouthshiftW{1}{%
\stepnorthA{2'}{%
\stepnorthA{2}{%
\stepwestshiftS{2'}{%
\stepeastA{1'}{%
\stepeastA{1'}{%
}}}}}}}}}
\put(0,0){\circle*{0.15}}
\put(3,2){\circle{0.15}}
\multiput(0,0)(0,0.2){15}{\line(0,1){0.1}}
\multiput(0,-0.02)(0.2,0){15}{\line(1,0){0.1}}
\end{picture}
\end{tabular}
\end{center}
\caption{The {\bf lattice walk} of a word $w \in \{1', 1, 2', 2\}^n$. In the interior of the first quadrant, each letter corresponds to a cardinal direction. Along the axes, primed and unprimed letters behave the same way. \textbf{Right:} The walk for $w=211'12'22'1'1'$ ends at the point $(3,2)$.\label{fig:lattice-walk-intro}} 
\end{figure}
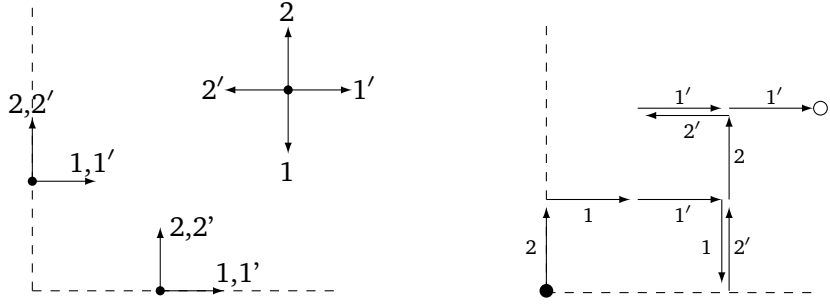

In the remainder of this section, for simplicity we assume $w$ is a word in the alphabet $\{1,1',2,2'\}$ and define only $E_1$ and $F_1$.  The definitions of $E_i$ and $F_i$ are then defined analogously on the $\{i,i',i+1,(i+1)'\}$-subword of a word by replacing $1,1',2,2'$ with $i,i',i+1,(i+1)'$ respectively in the definitions.
\begin{figure}
\begin{center}
\begin{tabular}{|c|c|c|c|c|}
\hline
\multirow{2}{*}{Type} 
& \multicolumn{3}{c|}{Conditions} & \multirow{2}{*}{Transformation}  \\
     & \multicolumn{1}{c}{Substring}&  \multicolumn{1}{c}{Steps} & Location &  
\\\hline
\hline
 \multirow{2}{*}{1F} & 
\multirow{2}{*}{$u = 1(1')^*2'$} &
\east{1} ~ \east{1'} ~ \north{2'} & 
$y=0$ & 
\multirow{2}{*}{$u \to 2'(1')^*2$} \\[.5ex]\cline{3-4}
& &  
\south{1} ~ \east{1'} ~ \north{2'} & 
$y=1$, $x \geq 1$ & 
\\[.5ex]\hline
\multirow{2}{*}{2F} &
\multirow{2}{*}{$u = 1(2)^*1'$} &
\east{1} ~ \north{2} ~ \east{1'} & 
$x = 0$ & 
\multirow{2}{*}{$u \to 2'(2)^*1$}  \\[.5ex]\cline{3-4}
&& 
\south{1} ~ \north{2} ~ \east{1'} &
$x = 1$, $y \geq 1$ & 
\\[.5ex]\hline
 3F & $u = 1$ & 
\east{1} & 
$y = 0$ & 
$u \to 2$ 
\\\hline
 4F & 
$u = 1'$ & 
\east{1'} & 
$x  = 0$ & 
$u \to 2'$ 
\\\hline
\multirow{2}{*}{5F} & $u = 1$ 
& \south{1}
& \multirow{2}{*}{$x=1$, $y \geq 1$} 
&
\multirow{2}{*}{undefined} \\[.5ex]\cline{2-3}
& $u=2'$ & \west{2'} &&
\\\hline
\end{tabular}
\end{center}
\vspace{0.5cm}

\begin{center}
\begin{tabular}{|c|c|c|c|c|}
\hline
\multirow{2}{*}{Type} 
& \multicolumn{3}{c|}{Conditions} & \multirow{2}{*}{Transformation}  \\
     & \multicolumn{1}{c}{Substring}&  \multicolumn{1}{c}{Steps} & Location &  
\\\hline
\hline
 \multirow{2}{*}{1E} & 
\multirow{2}{*}{$u = 2'(2)^*1$} &
\north{2'} ~ \north{2} ~ \east{1} & 
$x=0$ & 
\multirow{2}{*}{$u \to 1(2)^*1'$} \\[.5ex]\cline{3-4}
& &  
\west{2'} ~ \north{2} ~ \east{1} & 
$x=1$, $y \geq 1$ & 
\\[.5ex]\hline
\multirow{2}{*}{2E} &
\multirow{2}{*}{$u = 2'(1')^*2$} &
\north{2'} ~ \east{1'} ~ \north{2} & 
$y = 0$ & 
\multirow{2}{*}{$u \to 1(1')^*2'$}  \\[.5ex]\cline{3-4}
&& 
\west{2'} ~ \east{1'} ~ \north{2} &
$y = 1$, $x \geq 1$ & 
\\[.5ex]\hline
 3E & $u = 2'$ & 
\north{2'} & 
$x = 0$ & 
$u \to 1'$ 
\\[.5ex]\hline
 4E & 
$u = 2$ & 
\north{2} & 
$y  = 0$ & 
$u \to 1$ 
\\[.5ex]\hline
\multirow{2}{*}{5E} & $u = 1$ 
& \south{1}
& \multirow{2}{*}{$y=1$, $x \geq 1$} 
&
\multirow{2}{*}{undefined} \\[.5ex]\cline{2-3}
& $u=2'$ & \west{2'} &&
\\\hline
\end{tabular}
\end{center}

\caption{\label{fig:criticals} Above, the table of $F_1$-critical substrings and their transformations.  Below, the table of $E_1$-critical substrings and their transformations.  Here $a(b)^*c$ means any string of the form $abb \dots bc$, including $ac$, $abc$, $abbc$, etc.}
\end{figure}

\begin{definition}
If $w$ is a word (in $\{1,1',2,2'\}$) and $u = w_k w_{k+1} \dots w_l$ is a substring of some representative of $w$, then $u$ is a {\bf substring} of the word $w$.  The coordinates $(x,y)$ of the point in the $1$-walk just before the start of $u$ is the \defn{location} of $u$.
\end{definition}

\begin{definition} 
A substring $u$ is an \defn{$F_1$-critical substring} if certain conditions on $u$ and its location are met. There are five types of $F_1$-critical substrings. Each row of the first table in Figure \ref{fig:criticals} describes one type, the directions of its arrows in the walk, and a transformation that can be performed on that type.

For example, the first row indicates that $u$ is a critical substring of type 1F if $u$ is of the form $1(1')^*2'$ and the location $(x,y)$ satisfies $y=0$ or $y=1,\, x \geq 1$.
\end{definition}

The {\bf final} $F_1$-critical substring $u$ of $w$ is defined as follows: we take $u$ with the highest possible starting index, and take the longest in the case of a tie. If there is still a tie (from different representatives of $w$), we take any such $u$. 

\begin{definition}\label{def:F}
To define $F_1(w)$, fix a representative $v$ of $w$ containing the final $F_1$-critical substring $u$, and transform $u$ (in $v$) according to its type. The resulting word (i.e., after canonicalizing) does not depend on the choices, if any, of $u$ or $v$. If the type is 5F, or if $w$ has no $F_1$-critical substrings, then $F_1(w)$ is undefined; we write $F(w) = \varnothing$.

The definition of $E_1(w)$ is analogous using $E_1$-critical substrings.
\end{definition}

\begin{remark}
As an operator on words, the symmetry involution $\eta$ (see Section \ref{subsec:eta} ) simply replaces any letter $i$ with $(n+1-i)'$ and vice versa, and then unprimes the leftmost $i$ or $i'$ if necessary for each $i$.  For instance, if $n=3$, we have $$\eta(33'122'132)=113223'1'2'.$$ 
It is easy to see from Figure \ref{fig:criticals} that $\eta$ interchanges $E_i$ with $F_{n+1-i}$ for all $i$. Since the effect of $\eta$ on $\std(w)$ is to reverse the standardization order, $\eta$ also interchanges $E_i'$ with $F_{n+1-i}'$. For a tableau $T \in \ShST(\lambda/\mu,n)$ with reading word $w$, $\eta(w)$ naturally forms a ``reflected'' tableau:
\[
\begin{ytableau}
\none & 1' & 2 & 3 \\ 1 & 1
\end{ytableau}
\qquad \leadsto \qquad
\raisebox{2ex}{\begin{ytableau}
\none&1\\ 
\none&2\\
3' & 3 \\
3
\end{ytableau}}
\]
We therefore define $\eta(T) \in \ShST(\lambda/\mu,n)$ as the unique tableau dual equivalent to $T$, but slide equivalent to the reflected tableau (under shifted jeu de taquin). Since the operators $F_i, F_i'$ are coplactic, it is still true that $\eta$ interchanges $E_i$ with $F_{n+1-i}$ and $E_i'$ with $F_{n+1-i}'$, for all $i$. (For tableaux of shifted straight shape, $\eta$ reduces to Sch\"{u}tzenberger's evacuation, see \cite{Schutzenberger}.)
\end{remark}


\subsection{Axioms \ref{ax:basic1}, \ref{ax:basic2}, \ref{ax:length}, \ref{ax:Kashiwara-B}}

We briefly discuss the basic axioms. Axiom \ref{ax:basic1} holds because the operators $F$ and $F'$ are coplactic (and \ref{ax:basic1} clearly holds for rectified tableaux). Axiom \ref{ax:basic2} is immediate from the fact that the operators $F_i$ and $F_j$ act on disjoint subwords when $|i-j|>1$. Axiom \ref{ax:Kashiwara-B} is shown in Proposition 6.9 of \cite{GLP}. We are left with showing the Lengths Axiom \ref{ax:length}.

\begin{proposition}\label{prop:length}
Let $z = f_{i\pm 1}(w)$ or $f'_{i\pm 1}(w)$. Then
\[(\varepsilon_i(w)-\varepsilon_i(z), \varphi_i(w)-\varphi_i(z)) = (0,1) \text{ or } (-1,0).\]
\end{proposition}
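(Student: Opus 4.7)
The strategy is to reduce to a word-level computation, exploiting the coplacticity of $f_i, f_i'$ and $f_{i\pm 1}, f_{i\pm 1}'$ established in \cite{GLP}. Since the $i$-string statistics $\varepsilon_i, \varphi_i$ depend only on the restriction $W_w$ of the reading word of $w$ to the alphabet $\{i, i', i+1, (i+1)'\}$ (the only letters on which $f_i, f_i', e_i, e_i'$ act), it suffices to verify the proposition at the level of reading words.

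First I would describe precisely how $W_z$ differs from $W_w$. Inspecting the five critical-substring transformations in Figure \ref{fig:criticals} together with the primed rule of Proposition \ref{prop:explicit-definitions-primed}: if $z = f_{i+1}(w)$ or $f_{i+1}'(w)$, then $W_z$ arises from $W_w$ by a local modification that removes exactly one letter from $\{i+1, (i+1)'\}$ (possibly accompanied by a prime/unprime swap at a nearby position, as in Type 2F). Dually, if $z = f_{i-1}(w)$ or $f_{i-1}'(w)$, then $W_z$ is obtained from $W_w$ by inserting a single letter from $\{i, i'\}$ at a specific location.

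Next I would combine two constraints. The Kashiwara axiom \ref{ax:Kashiwara-B} together with $\wt(z) = \wt(w) - \alpha_{i\pm 1}$ gives
\[
\varphi_i(z) - \varepsilon_i(z) = \bigl(\varphi_i(w) - \varepsilon_i(w)\bigr) + 1
\]
in both cases, which pins $(\varepsilon_i(w) - \varepsilon_i(z), \varphi_i(w) - \varphi_i(z))$ to a one-parameter family containing both options stated. Separately, by Axiom \ref{ax:basic1}, the total length $\varepsilon_i(v) + \varphi_i(v)$ of the $i$-string through $v$ is determined by the pair $(\wt_i(v), \wt_{i+1}(v))$ together with the separated/collapsed type of the string; one checks that the local modification of $W_w$ from the previous step changes this total length by exactly one, which combined with the Kashiwara constraint forces $(\Delta\varepsilon_i, \Delta\varphi_i)$ to be one of the two specified values.

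The main obstacle is verifying the unit-length-change claim in the degenerate cases where the local modification switches the string between separated and collapsed types, since the two length formulas differ. I would handle this by analyzing the $i$-lattice walk associated to $W_w$ directly and tracking how individual $F_i$- and $E_i$-critical substrings are locally created or destroyed under a single letter insertion or deletion. This parallels the proof of the analogous Stembridge axiom \ref{ax:Stembridge-lengths} in type A, with extra bookkeeping for the distinction between primed and unprimed letters.
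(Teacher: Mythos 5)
Your reductions (coplacticity, passing to the $\{i,i',i{+}1,(i{+}1)'\}$-subword, and the observation that the edge amounts to deleting one letter $i{+}1$ or $(i{+}1)'$, resp.\ inserting one letter $i$ or $i'$) match the paper's setup, which additionally uses $\eta$ to collapse the two cases into the single insertion case. The gap is in the step that carries all the content. Your claim that, by Axiom \ref{ax:basic1}, the total length $\varepsilon_i(v)+\varphi_i(v)$ of the $i$-string through $v$ is determined by $(\wt_i(v),\wt_{i+1}(v))$ together with the separated/collapsed type is false: Axiom \ref{ax:Kashiwara-B} only pins down the difference $\varphi_i-\varepsilon_i$ from the weight, and, for instance, a vertex of weight $(3,3)$ can lie on the separated string with highest weight $(5,1)$ (total length $4$) or on the one with highest weight $(4,2)$ (total length $2$). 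Consequently your ``one checks that the local modification changes this total length by exactly one'' has no mechanism behind it; together with the Kashiwara constraint it is equivalent to the proposition itself, so it is precisely the assertion to be proved, not a routine verification. (Also, your identity $\varphi_i(z)-\varepsilon_i(z)=\varphi_i(w)-\varepsilon_i(w)+1$ is correct, but the resulting one-parameter family contains $(0,-1)$ and $(1,0)$ — the options of Axiom \ref{ax:length} — not the literally stated pair $(0,1)$, $(-1,0)$; there is a sign discrepancy in the statement that your argument silently reproduces rather than resolves.)

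The paper closes exactly this gap by citing the Bounded Error Lemma (Proposition 4.9 of \cite{GLP}): after inserting the letter, the tail of the $i$-lattice walk is offset by one unit step (down or right), and the lemma guarantees that the \emph{endpoint} of the walk is then also offset by exactly one unit step down or right; since $\varphi_i$ and $\varepsilon_i$ are the coordinates of that endpoint, the two offsets are the two cases. This is genuinely nontrivial because the walk's steps change meaning on the coordinate axes (Figure \ref{fig:lattice-walk-intro}), so a single inserted letter can alter the directions of arbitrarily many later steps; your fallback of ``tracking how critical substrings are created or destroyed'' does not engage with this propagation issue (the statistics here are read off the walk endpoint, not off critical substrings), and the difficulty is not confined to the separated/collapsed degenerations you single out. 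To complete your argument you would need either to invoke the Bounded Error Lemma or to reprove it by an induction along the walk.
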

\begin{proof}
By applying $\eta$ and exchanging $z$ and $w$, we may assume the edge has numerical value $i-1$. We may then take $i=2$. By coplacticity, we may assume $w$ is the reading word of a rectified shifted tableau.

We compare the 2,3-subwords of $w$ and of $z = F_1(w)$ or $F_1'(w)$. The effect of $F_1'$ is to insert a $2'$ into this subword, while the effect of $F_1$ is to insert a $2$. Either way, $z$ and $w$ have the same suffix after the inserted letter, but the tail of the lattice walk for $z$ is initially offset from that of $w$, either by one step downwards or one step rightwards. By the Bounded Error Lemma (Proposition 4.9 of \cite{GLP}), the endpoint of the lattice walk for $z$ is also offset by one step either downwards or rightwards (not necessarily the same as the initial offset). These outcomes give the two cases of the statement.
\end{proof}

\section{Establishing the merge axioms for shifted tableaux}

We now prove that the merge axioms hold for the coplactic crystal-like structures on shifted tableaux.  Note that a simple application of $\eta$ proves if an axiom holds, then its dual axiom holds.  So it suffices to prove one of the axiom or its dual.  We establish one axiom/dual pair in each subsection below.

We prove the axioms only for $i=1$ and $i+1=2$, since the general case is identical.  We can therefore further restrict to tableaux having entries from the alphabet $\{1',1,2',2,3',3\}$, and since all the operators are coplactic, we can assume the tableaux have straight shape.  In all of what follows below, we therefore assume $T$ is a straight shape semistandard shifted tableau in the letters $\{1',1,2',2,3',3\}$.  We will furthermore assume that $T$ is in canonical form, so that whenever we discuss which entries occur in $T$, we are specifically discussing its canonical representative.
 
We begin with a lemma that will simplify the last four proofs below.

\begin{lemma}\label{lem:Delta-tableaux}
  Suppose $F_1(T)$ and $F_2(T)$ are defined. We have $\Delta\veps_1=\veps_1(T)-\veps_1(F_2(T))=0$ if and only if the final $F_2$-critical substring of $T$ starts in the second row (rather than the top row).
  
  We have $\Delta\veps_2 = \veps_2(T)-\veps_2(F_1(T))=0$ if and only if the $x$-coordinate of the endpoint of the $2,3$-walk changes after inserting a $2$ into the top row of $T$. 
\end{lemma}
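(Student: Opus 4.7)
The plan is to reduce both claims to computations with lattice walks, using the fact (from \cite{GLP}) that $\veps_i(T)$ and $\varphi_i(T)$ are the two coordinates of the endpoint of the $i,i{+}1$-lattice walk of the reading word of $T$; specifically $\veps_i$ is the $y$-coordinate and $\varphi_i$ is the $x$-coordinate. Under this interpretation, the two claims become statements about how the walk endpoint shifts when a localized modification is made to the reading word.

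For the first claim, I would begin by establishing that the final $F_2$-critical substring is contained in a single row of $T$, either entirely in row 1 or entirely in row 2. Since $T$ is a straight-shape tableau on the alphabet $\{1',1,2',2,3',3\}$, the $2$'s, $2'$'s, $3$'s, and $3'$'s within the critical substring must respect the row-ordering constraints imposed by the semistandard conditions. A case check against the five $F_2$-critical substring types (the analogue of Figure \ref{fig:criticals} obtained by relabeling $1 \to 2$ and $2 \to 3$) establishes this row-confinement. I would then split into the two cases: when the substring is in row 2, the $F_2$-modification changes the initial (row-2) segment of the reading word, and a direct walk analysis shows the endpoint's $y$-coordinate is preserved, giving $\Delta\veps_1 = 0$; when the substring is in row 1, the modification changes the row-1 segment of the reading word, and a similar analysis shows the walk endpoint's $y$-coordinate drops by one, giving $\Delta\veps_1 = 1$.

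For the second claim, the operator $F_1$ modifies letters of the $\{1',1,2',2\}$-alphabet, all of which lie in row 1 of the straight-shape tableau $T$. By examining the four defined $F_1$-critical substring types, the net effect on the $\{2,2',3,3'\}$-subword amounts to inserting a single $2$ or $2'$ into the row-1 portion of the subword (with, in types 1F and 2F, an accompanying local $2' \leftrightarrow 2$ substitution that can be shown not to shift the walk endpoint). The change in $\veps_2$ is thus controlled by how a single-letter insertion shifts the $2,3$-walk endpoint. By the Bounded Error Lemma (Proposition 4.9 of \cite{GLP}), the endpoint shift is either one step east (so the $x$-coordinate increases by one and $y$ is unchanged) or one step south (so $y$ decreases by one and $x$ is unchanged). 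These two alternatives correspond precisely to $\Delta\veps_2 = 0$ and $\Delta\veps_2 = 1$, yielding the iff claim.

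The main obstacle is the row-confinement step in the first claim. The ``final'' $F_2$-critical substring is defined by a choice criterion (maximize the starting index, then the length, then across primed representatives) that interacts nontrivially with the semistandard row ordering, and the check that the substring does not straddle rows 1 and 2 must account for all five critical-substring types. Once this is settled, the remaining walk analyses reduce to local mechanical checks using the walk rules of Figure \ref{fig:lattice-walk-intro}.
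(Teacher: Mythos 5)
Your handling of the second claim is fine and is essentially the paper's: given that $(\varphi_2,\veps_2)$ are the coordinates of the endpoint of the $2,3$-walk, the effect of $F_1$ on the $\{2',2,3',3\}$-subword is a single-letter insertion in the top row, and the one-step-east-or-one-step-south dichotomy for the endpoint (Bounded Error Lemma, equivalently the already-proved Lengths Axiom \ref{ax:length}/Proposition \ref{prop:length}) translates directly into the stated equivalence.

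The first claim, however, contains a genuine gap: the row-confinement step you identify as the main obstacle is not merely delicate, it is false. A final $F_2$-critical substring of type 2F can have the form $2\,3^{*}\,2'$ with the $2$ (and the $3$s) at the end of the second row and the $2'$ in the top row, and a type 1F substring can have the form $2\,2'\,3'$ with the $2$ the last entry of the second row and the $2'3'$ in the top row; the paper's later arguments (proofs of Axioms \ref{ax:square}, \ref{ax:half-primed-stembridge}, \ref{ax:stembridge}) analyze exactly these straddling configurations, which is why the lemma is phrased in terms of where the substring \emph{starts}. In the straddling 2F case your description of the modification is also wrong: besides the $2$ in row 2 becoming $3'$, a $2'$ in row 1 becomes a $2$, so the $\{1',1,2',2\}$-subword changes in row 1 as well, and the proposed "direct walk analysis of the initial row-2 segment" does not apply as stated. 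The paper sidesteps all of this: by Axiom \ref{ax:length}, $\Delta\veps_1=0$ if and only if $\varphi_1$ increases by $1$ under $F_2$; since $\varphi_1(T)$ equals the number of $1$s in the top row minus the number of $2$s in the second row, $\varphi_1$ changes precisely when the letter transformed from a $2$ (or $2'$) to a $3$ or $3'$ --- the first letter of the final critical substring --- lies in the second row, i.e.\ precisely when the substring starts there. If you wish to keep a purely walk-based route for the first claim, you must replace "contained in one row" by the start-row dichotomy and treat the straddling 1F/2F cases explicitly (tracking both the lost $2$ in row 2 and, for 2F, the $2'\to 2$ change in row 1); the $\varphi_1$-counting argument via the Lengths Axiom is considerably shorter.
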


\begin{proof}
 Note that by Axiom \ref{ax:length} (Proposition \ref{prop:length}), $\varepsilon_1(T)-\varepsilon_1(F_2(T))=0$ if and only if $\varphi_1(T)-\varphi_1(F_2(T))=-1$ (and otherwise these differences are $1$ and $0$ respectively).  The value of $\varphi_1(T)$ is the difference between the number of $1$s in the top row and the number of $2$s in the second row of $T$.  If applying $F_2$ changes an $F_2$-critical substring starting in the second row, then one of the $2$ entries in the second row changes to a $3$ or $3'$, which does change $\varphi_1$; otherwise, if it starts in the top row, $\varphi_1$ does not change.  The first statement follows.
  
The second statement is a reformulation of the lengths axiom \ref{ax:length} (note that $\varphi_2$ is the $x$-coordinate of the endpoint of the $2,3$-walk).
\end{proof}

\subsection{Axiom \ref{ax:primed-square}: Primed-square}

\begin{proposition}
  If $F_1'(T)$ and $F_2'(T)$ are both defined, then $$F_1'(F_2'(T))=F_2'(F_1'(T))\neq \varnothing.$$
\end{proposition}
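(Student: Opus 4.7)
My plan is to leverage the standardization-based Definition~\ref{def:primed-operators}: $F_i'(w)$ is the unique word with $\std(F_i'(w))=\std(w)$ and $\wt(F_i'(w))=\wt(w)-\alpha_i$, when such a word exists.

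The equality of the two composites $F_1'(F_2'(T))$ and $F_2'(F_1'(T))$, assuming both exist, is then immediate: each has standardization $\std(T)$ and weight $\wt(T)-\alpha_1-\alpha_2$, so they coincide by uniqueness. Thus the substantive task is to verify that both composites are in fact defined.

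For existence, I would rephrase Proposition~\ref{prop:explicit-definitions-primed} in standardization terms. Setting $s_i(w)=\wt_1(w)+\cdots+\wt_i(w)$, the ``last $i$'' in any representative of $w$ sits at the position holding std-number $s_i(w)$, and the ``last $(i+1)'$'' at the position holding std-number $s_i(w)+1$. Thus $F_i'$ is defined at $w$ iff, in $\std(w)$, the position of std-number $s_i(w)$ is to the right of that of $s_i(w)+1$. Since $F_1'$ preserves $\std$ and also preserves $s_2=\wt_1+\wt_2$ (because the first two coordinates of $-\alpha_1=(-1,+1,0)$ cancel), the $F_2'$-applicability condition transfers from $T$ to $F_1'(T)$, giving existence of $F_2'(F_1'(T))$. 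Symmetrically $F_2'$ preserves $s_1=\wt_1$, so $F_1'(F_2'(T))$ exists. Combined with the previous paragraph, this completes the proof.

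The main technical point is the standardization reformulation of Proposition~\ref{prop:explicit-definitions-primed}. Verifying it amounts to checking that switching among representatives does not move the rightmost unprimed $i$ or the rightmost primed $(i+1)'$ (provided they persist in the chosen representative); this follows from the reverse-reading-order assignment of std-numbers to primed letters together with the forward-reading-order assignment to unprimed letters within each letter block. Once this reformulation is in hand, the commutation is essentially a bookkeeping statement about which coordinates of the weight vector are touched by each of $\alpha_1$ and $\alpha_2$.
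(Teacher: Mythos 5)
Your reduction of the equality statement to the existence statement via Definition~\ref{def:primed-operators} is fine (the paper does the same, and in fact only needs \emph{one} composite to be defined: once $F_1'(F_2'(T))$ exists, it is a word with standardization $\std(T)$ and weight $\wt(T)-\alpha_1-\alpha_2$, so uniqueness forces $F_2'(F_1'(T))$ to be defined and equal to it). The gap is in your ``main technical point'': the claimed equivalence that $F_i'$ is defined at $w$ if and only if the position carrying std-number $s_i(w)$ lies to the right of the position carrying $s_i(w)+1$ is false. The criterion fails in exactly the collapsed situations, where the canonical form of $w$ contains no $(i+1)'$ at all. Concretely, take $w=12$ (the one-row tableau with entries $1,2$): here $s_1(w)=1$, std-number $1$ sits to the \emph{left} of std-number $2$, yet $F_1'(12)=22\neq\varnothing$, since $22$ has standardization $12$ and weight $\wt(w)-\alpha_1$; this is the collapsed edge of the one-row string, where $F_1'=F_1$ (cf.\ Axiom~\ref{ax:basic1} and the collapsed strings of Figure~\ref{fig:two-row-intro}). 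Your parenthetical ``provided they persist in the chosen representative'' is precisely the case you cannot set aside: when no $(i+1)'$ persists, Proposition~\ref{prop:explicit-definitions-primed} still yields a defined $F_i'$, but your positional test reports $\varnothing$. Such $T$ occur squarely within the hypotheses of the proposition (e.g.\ any one-row tableau, or any $T$ with no $3'$), so the chain ``$F_2'(T)$ defined $\Rightarrow$ criterion at $T$ $\Rightarrow$ criterion at $F_1'(T)$ $\Rightarrow$ $F_2'(F_1'(T))$ defined'' breaks at its first implication for these inputs.

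The transfer idea itself is salvageable, but only with the correct standardization criterion: $F_i'(w)$ is defined iff the positions (in $\std(w)$) of the std-numbers $s_i(w), s_i(w)+1,\ldots,s_{i+1}(w)$ can be split into a strictly decreasing prefix followed by a strictly increasing suffix (equivalently, either your positional condition holds \emph{or} the canonical form of $w$ has no $(i+1)'$). That condition does depend only on $\std(w)$ and the partial sums $s_i, s_{i+1}$, so it is preserved under the other primed operator exactly as you intend; but this stronger statement needs to be proved, and it is where the real work lies. The paper avoids the issue entirely: after the uniqueness reduction it checks definedness of the single composite $F_1'(F_2'(T))$ directly on straight-shape tableaux (legitimate since the operators are coplactic), observing that $\wt_1(F_2'(T))=\wt_1(T)>0$ and that the only obstruction to $F_1'$ --- a $2'$ in the first row together with a $2$ in the second --- cannot be created by $F_2'$, which turns a $2$ into a $3'$ and can only delete a prime from a $2$.
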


\begin{proof}
By the definition of the primed operators, it suffices to show that $F'_1(F'_2(T))$ is defined, since it is then the (unique) word of the desired weight and standardization, and hence agrees with $F'_2(F'_1(T))$.

Note that $\wt_1(F'_2(T)) = \wt_1(T) > 0$.  For tableaux containing a $1$, $F'_1$ is defined unless the second row is nonempty and contains a $2$, \emph{and} the first row contains a $2'$. Applying $F'_2$ changes a $2$ to a $3'$ and possibly deletes a prime from a $2$; regardless it cannot introduce those conditions, so $F_1'$ will still be defined.
\end{proof}

\subsection{Axiom \ref{ax:half-solid-square}: Half-solid Square}

\begin{proposition}\label{prop:tableau-half-solid-square}
  Suppose both $F_1'(T)$ and $F_2'(T)$ are defined.  Then $$S:=F_1(F_2'(T))=F_2(F_1'(T))\neq \varnothing\hspace{0.3cm}\text{ and }\hspace{0.3cm}F_1'(F_2'(T))\neq S$$ if and only if $\Delta=(0,0)$ and $\varphi_2(T)=1$ and $\widehat{\varphi}_2(T)=0$.
\end{proposition}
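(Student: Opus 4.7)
My plan is to translate the three hypotheses into combinatorial conditions on the reading word of $T$, then verify both directions of the equivalence using the explicit operator descriptions of Section~5. Since $T$ is rectified on the alphabet $\{1',1,2',2,3',3\}$, everything may be read off its 1,2- and 2,3-subwords via Proposition~\ref{prop:explicit-definitions-primed} and Figure~\ref{fig:criticals}. The conditions $\vphi_2(T)=1$ and $\widehat{\vphi}_2(T)=0$ together force the 2,3-subword of $T$ to admit a final $F_2'$-critical substring (so $F_2'(T)$ is defined) but no $F_2$-critical substring. Using Lemma~\ref{lem:Delta-tableaux} and an analog for the primed operators derived in the same way, the condition $\Delta=(0,0)$ then translates into a compatibility condition between the position of the last $2$ in $T$'s 2,3-subword and the position of the last $1$ in $T$'s 1,2-subword.

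For the forward direction, I would begin with Axiom~\ref{ax:primed-square} (just established above), which gives $F_1'F_2'(T)=F_2'F_1'(T)\ne\varnothing$. To prove $F_1(F_2'(T))=F_2(F_1'(T))=:S$, I would use that applying $F_2'$ changes a single $2$ to $3'$ and therefore leaves the 1,2-subword of $T$ unchanged; the $F_1$-critical substring analysis for $F_2'(T)$ then coincides with that for $T$, and $F_1(F_2'(T))$ acts at the same location in the 1,2-subword that it would in $T$. Symmetrically, $F_1'$ introduces a $2'$ at the position of the last $1$, which creates a new $F_2$-critical substring (of type 2F or 4F) of $F_1'(T)$ whose transformation modifies the same $2$ that $F_2'$ transformed in $T$; the hypotheses $\Delta=(0,0)$ and $\widehat{\vphi}_2(T)=0$ are exactly what guarantee that this newly-enabled substring is the final $F_2$-critical one of $F_1'(T)$. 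The inequality $F_1'(F_2'(T))\ne S$ then follows from Axiom~\ref{ax:basic1} applied to $F_2'(T)$: the weights and structure of $F_2'(T)$ force its $1/1'$-string to be separated, so its primed and unprimed $1$-edges lead to different vertices.

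For the converse, assuming the half-solid square holds, the inequality $F_1'(F_2'(T))\ne F_1(F_2'(T))$ forces (by Axiom~\ref{ax:basic1}) the $1/1'$-string through $F_2'(T)$ to be separated. Running the forward analysis in reverse, the existence of $F_1$ and $F_2$ actions closing up to the common vertex $S$, combined with the separated-string constraint, forces the final $2$ of $T$ to occupy a configuration giving $\widehat{\vphi}_2(T)=0$ and $\vphi_2(T)=1$, while the compatibility of the two paths yields $\Delta=(0,0)$. The main obstacle will be the case analysis at the heart of the forward direction: one must identify exactly where the new $2'$ from $F_1'$ appears in the 2,3-subword of $F_1'(T)$, and verify that the resulting $F_2$-critical substring coincides with the $2\to3'$ change performed by $F_2'$ on $T$. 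It is precisely the structural conditions $\Delta=(0,0)$ and $\widehat{\vphi}_2(T)=0$ that rule out interference between the two operations and allow the square to close.
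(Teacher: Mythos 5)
There is a genuine gap: your write-up is a plan whose central steps are precisely the ones left undone, and one of its stated ingredients is false. First, $F_2'$ does \emph{not} leave the $1,2$-subword of $T$ unchanged -- it converts the last $2$ into a $3'$, so the $1,2$-subword loses a letter and its lattice walk changes; the claim that ``the $F_1$-critical substring analysis for $F_2'(T)$ coincides with that for $T$'' therefore does not follow and needs an argument (it is here that the hypotheses must be used to show the top row of $T$ contains no $2$ or $2'$ at all). Second, the heart of the forward direction -- identifying where the $2'$ created by $F_1'$ sits in the $2,3$-walk of $F_1'(T)$ and showing that the resulting final $F_2$-critical substring is of type 2F whose transformation changes exactly the $2$ that $F_2'$ changed in $T$ \emph{and} turns the new $2'$ back into a $2$ -- is exactly what you defer as ``the main obstacle.'' Note also that your hedge ``type 2F or 4F'' cannot stand: a type 4F substring would change the new $2'$ itself to a $3'$, producing a tableau different from $F_1(F_2'(T))$, so the 4F possibility must be excluded, not allowed. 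You also invoke Lemma~\ref{lem:Delta-tableaux}, whose hypotheses ($F_1(T)$ and $F_2(T)$ both defined) fail here since $\widehat{\varphi}_2(T)=0$ forces $F_2(T)=\varnothing$; the ``primed analog'' you mention is not derived, and the paper's framework has no ``$F_2'$-critical substrings'' at all (the primed operators are defined via standardization, Proposition~\ref{prop:explicit-definitions-primed}).

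The converse is not argued: ``running the forward analysis in reverse'' does not address the actual content. The paper proves both directions by showing each side of the biconditional is equivalent to an explicit normal form, $T$ having rows $(1^c3^d,\,2^{a+1}3^b,\,3^a)$ with $a\ge 1$. Extracting this from the square relation requires several nontrivial exclusions that your outline never touches: ruling out a $2$ in the top row (the paper does this by observing that the rightmost such $2$ would have to start the final $F_2$-critical substring of $F_1'(T)$, leaving a $2'$ in $F_2(F_1'(T))$ that is absent from $F_1(F_2'(T))$); ruling out a $3'$; showing the third row is nonempty (else the square would be a primed square, violating $F_1'(F_2'(T))\ne S$); and showing the number of $2$s in row two exceeds the number of $3$s in row three by exactly one (else the $F_2$ acting on $F_1'(T)$ is type 3F and leaves the new $2'$ in place, again breaking the equality). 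Your appeal to Axiom \ref{ax:basic1} for the separatedness of the $1/1'$-string through $F_2'(T)$ is legitimate (those basic axioms are established earlier), but by itself it only yields $F_1'(F_2'(T))\ne F_1(F_2'(T))$ once the square is known; it does not substitute for the case analysis above. As it stands the proposal identifies the right objects to study but proves neither implication.
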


\begin{proof}
  Since both $F_1'(T)$ and $F_2'(T)$ are defined, we know that there is at least one $1$ and at least one $2$ in $T$ and that there is no $2'$ in the top row (in canonical form).  Moreover, the last $2$ is to the right of the last $3'$, even in canonical form.
  
  We will show that both statements are equivalent to the statement that $T$ has rows $$(1^{c}3^{d},2^{a+1}3^{b},3^a)$$ from top to bottom for some $a\ge 1$ and $b,c,d\ge 0$ such that $T$ is semistandard. 
  
  It is easy to check that if $T$ has this form, then $F_1(F_2'(T))=F_2(F_1'(T))$, where the application of $F_2$ on the right is a type 2F final critical substring.  It is also easy to see that the square is not simultaneously a primed square.  This is illustrated in the following diagram.
\begin{center}
\begin{tikzpicture}[x=0.6cm,y=0.6cm]
  \node (top) at (0,0) {\scriptsize $\begin{ytableau} 1 & 1 & 1 & 1 & 1 & 1 & 3 & 3 \\ \none & 2 & 2 & 2 & 3 \\ \none & \none & 3 & 3 \end{ytableau}$};
  \node (left) at (-4,-4) {\scriptsize $\begin{ytableau} 1 & 1 & 1 & 1 & 1 & 2' & 3 & 3 \\ \none & 2 & 2 & 2 & 3 \\ \none & \none & 3 & 3 \end{ytableau}$};
  \node (right) at (4,-4) {\scriptsize $\begin{ytableau} 1 & 1 & 1 & 1 & 1 & 1 & 3 & 3 \\ \none & 2 & 2 & 3' & 3 \\ \none & \none & 3 & 3 \end{ytableau}$}; 
  \node (bot) at (0,-8) {\scriptsize $\begin{ytableau} 1 & 1 & 1 & 1 & 1 & 2 & 3 & 3 \\ \none & 2 & 2 & 3' & 3 \\ \none & \none & 3 & 3 \end{ytableau}$};
  
  \draw [red, dashed, ->, thick] (top)--(left);
  \draw [blue, dashed, ->, thick] (top)--(right);
  \draw [red, ->, thick] (right)--(bot);
  \draw [blue, ->, thick] (left)--(bot);
\end{tikzpicture}
\end{center}
  Furthermore, by analyzing the $2,3$-lattice walk of such a tableau we see that it ends at $x=1$ so $\vphi_2(T)=1$. Since the final $F_2$-critical substring is type 5F we have $F_2(T)=\varnothing$ and hence $\vphihat_2(T)=0$.  Finally, we have $\veps_1(T)=\veps_1(F_2'(T))=0$ since we cannot apply $E_1$, and we have $\veps_2(T)=\veps_2(F_1'(T))$ since the $y$-coordinate of the $2,3$-walk does not change upon inserting a $2'$ before a final run of $3$s.  Thus if $T$ has the above form it satisfies the numerical conditions $\Delta=(0,0)$, $\varphi_2=1$, $\widehat{\varphi}_2=0$.
  
  We now show that if $F_1(F_2'(T))=F_2(F_1'(T))$ and the square is not simultaneously a primed square, then $T$ has the desired form above.  Since its $F_1(F_2'(T))\neq F_1'(F_2'(T))$, we know that $F_2'(T)$ has at least $2$ rows, with a $2$ in the second row, and hence $T$ itself has at least $2$ rows with a $2$ in the second row as well. 

Now, assume for contradiction there is also a $2$ in the top row of $T$. Consider the square $s$ containing the rightmost $2$ in the top row of $T$. Then $s$ is a $2$ in $F_1'(T)$, but a $3'$ in $F_1(F_2'(T)) = F_2(F_1'(T))$, so $s$ must be the first entry of the final $F_2$-critical string of $F_1'(T)$. In particular, $F_2(F_1'(T))$ still contains a $2'$ (which occurs before $s$), while $F_1(F_2'(T))$ does not, a contradiction. Next, we see that the canonical form of $T$ does not contain a $3'$, since $F_2'(T)$ is defined. We also see that the third row of the tableau is nonempty, since otherwise both $F_2$ arrows would be collapsed and the square would be simultaneously a primed square.

  
  Let $a$ be the number of $3$s in the bottom row and assume for contradiction that the number of $2$s in the second row is greater than $a+1$.  Then the application of $F_2$ to $F_1'(T)$ is a type 3F critical substring (rather than 2F) in the second row which simply changes the last $2$ to a $3$, and therefore leaves the $2'$ in the top row.  Thus it does not match $F_1(F_2'(T))$.  It follows that the number of $2$s is indeed $a+1$.  Thus $T$ has the desired form.
  
  Finally we show that if $T$ satisfies the numerical conditions $\Delta=(0,0)$ and $\varphi_2(T)=1$ and $\widehat{\varphi}_2(T)=0$ then $T$ has the form above.  Indeed, assuming these conditions hold we have that $\veps_1(T)=\veps_1(F_2'(T))$ by the definition of $\Delta$.  This means that after changing the last $2$ to a $3'$, the total number of $2/2'$s in the top row stays the same, and so in fact there must be none and $\veps_1(T)=0$. We also see that there are no $3'$ entries (since $F_2'(T)$ is defined), but there is a $2$ in the second row, and the third row is nonempty (otherwise $F_2(T) = F_2'(T)$, contradicting $\widehat{\varphi}_2(T) = 0$). Finally, since $\varphi_2(T)=1$ but $\widehat{\varphi}_2(T)=0$, the number of $2$s in the second row is exactly one more than the number of $3$s in the third row.
\end{proof}

\subsection{Axiom \ref{ax:dual-f1primef2-square}: Dual Square for $\{e_2',e_1\}$}

\begin{definition}
  A shifted straight shape tableau $T$ on the alphabet $\{1',1,2',2,3',3\}$ is \textbf{special} if (a) the second row of $T$ is nonempty, (b) there is exactly one $2$ or $2'$ in $T$ and it occurs in the top row (and is therefore a $2$), and (c) there is no $3'$ in the top row.
\end{definition}

\begin{example}
  The following is a special tableau: 
  \begin{center}\small
  \begin{ytableau} 
    1 & 1 & 1 & 1 & 2 & 3 & 3 \\
    \none & 3 & 3 
  \end{ytableau}
  \end{center}
\end{example}

In particular, a special tableau must have at most two rows by semistandardness.  We now show that our operators satisfy Axiom \ref{ax:dual-f1primef2-square}.

\begin{proposition}\label{prop:E1E2prime}
  Suppose $T$ is not special, and both $E_2'(T)$ and $E_1(T)$ are defined.  Then $E_1(E_2'(T))$ and $E_2'(E_1(T))$ are defined and are equal to each other: 
$$E_1(E_2'(T))=E_2'(E_1(T))$$
In particular, since special tableaux have the property that $E_1'(T)=E_1(T)$ and $E_2'(T)=E_2(T)$, the above relation holds whenever $E_2'(T)$ and $E_1(T)$ are defined and either $E_1'(T)\neq E_1(T)$ or $E_2'(T)\neq E_2(T)$.
\end{proposition}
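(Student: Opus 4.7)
My plan is to reason directly with the reading word of $T$, using the explicit descriptions of $E_1$ and $E_2'$ from Proposition~\ref{prop:explicit-definitions-primed} and Figure~\ref{fig:criticals}. The setup is entirely combinatorial, since $T$ has straight shape in the alphabet $\{1',1,2',2,3',3\}$.

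First I will set up notation. By Proposition~\ref{prop:explicit-definitions-primed}, $E_2'$ acts on $T$ by choosing a representative of the reading word in which the last $3'$ lies strictly right of the last $2$, and changing that $3'$ to a $2$; call this position $p$. Since the modified letter $3'$ is not an element of the alphabet $\{1,1',2,2'\}$ used by $E_1$, the $1,1',2,2'$-subword of $E_2'(T)$ will differ from that of $T$ only by insertion of a single ``$2$'' at $p$, which lies strictly after every prior $2$. Equivalently, the $1,2$-lattice walk of $E_2'(T)$ is obtained from that of $T$ by splicing in one extra step.

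I will then proceed by case analysis on the type (1E through 4E) of the final $E_1$-critical substring $u$ of $T$. For types 3E and 4E, $u$ is a single letter; I will use the lattice-walk location constraints from Figure~\ref{fig:criticals} to rule out the inserted $2$ becoming the start of a new, more-rightward critical substring, so that $E_1$ acts at the same position in $T$ and in $E_2'(T)$, and the two operations commute by disjointness of the affected letters. For types 1E and 2E, where $u$ has the form $2'(2)^*1$ or $2'(1')^*2$, the inserted $2$ may either leave $u$ unchanged or extend it to a longer critical substring of the same type; in each sub-case I will directly verify $E_1(E_2'(T))=E_2'(E_1(T))$ by tracking the prime/unprime transformations in Figure~\ref{fig:criticals}. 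The ``not special'' hypothesis is used precisely to exclude the degenerate configuration (a unique $2$ in the top row, nonempty second row, and no $3'$ in the top row) in which the inserted $2$ would instead create a critical substring of structurally different shape; that scenario is exactly the half-solid square of Proposition~\ref{prop:tableau-half-solid-square}, not an $\{E_2',E_1\}$-square.

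The main obstacle I anticipate is the sub-case where the inserted $2$ genuinely extends an existing type-1E or type-2E critical substring, causing $E_1$ to act at a shifted terminal position in $E_2'(T)$. There I must verify that the ``last $3'$'' modified by $E_2'$ after $E_1$ has acted still matches the original choice of $p$, so that both compositions yield the same final tableau. Row-by-row case analysis on where the $3'$'s of $T$ lie relative to the critical substring, together with semistandardness, should close this remaining case.
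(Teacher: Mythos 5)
Your route is genuinely different from the paper's, which never touches critical substrings here: it uses the standardization characterization of $E_2'$ (Proposition \ref{prop:simpler-E2prime}) together with the fact that on a straight shape $E_1$ simply changes the first $2$ or $2'2$ in the top row, and then argues that the $3'$ changed by $E_2'$ must also lie in the top row (otherwise changing it to a $2$ would alter the standardization, since there is a $2$ in the top row later in reading order), after which commutation is essentially immediate. Your walk-based plan has a genuine gap at exactly this point. You reduce everything to ``splicing one extra step'' into the $\{1,1',2,2'\}$-walk and then checking whether the inserted $2$ starts a new, more-rightward critical substring; but unless you first prove that the modified $3'$ lies in the top row, to the right of all $2$s --- equivalently, that the inserted $2$ is appended at the very end of the $\{1,1',2,2'\}$-subword --- the insertion point can precede the top row's $1$s and $2'$, and then it shifts the locations of every subsequent step. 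Since step directions and the criticality conditions of Figure \ref{fig:criticals} depend on location (axes versus interior), the tail of the walk can change shape entirely: a later type-5E substring can appear (making $E_1(E_2'(T))$ undefined), and criticality of substrings not containing the inserted letter can be created or destroyed. Controlling this is the real content of the statement, and a case analysis on the type of the original final $E_1$-critical substring does not address it. You also never verify that $E_2'(E_1(T))$ is defined: after $E_1$ acts one must recheck the representative condition of Proposition \ref{prop:explicit-definitions-primed} (or the standardization condition), and this is not automatic, since the type-1E and type-2E transformations introduce a $1'$ or turn a $2$ into a $2'$.

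Two smaller points. Your blanket claim that $E_2'$ only inserts a $2$ ``strictly after every prior $2$'' fails precisely when $T$ has a unique $2$ and the last $3'$ (possibly the primed first $3$) lies to its left; there $E_2'$ also reprimes that unique $2$ to $2'$, so the subword changes in two places. You attribute the exclusion of this case to ``not special,'' which is the right idea, but identifying that configuration with specialness is itself a short argument you would need to supply: the paper uses that $E_1(T)$ is defined to place the unique $2$ in the top row, and semistandardness to force the $3'$ into the second row. Also, your aside that this degenerate scenario ``is exactly the half-solid square of Proposition \ref{prop:tableau-half-solid-square}'' is inaccurate: at a special tableau the failure is that $E_1(E_2'(T))$ is undefined (a type-5E substring appears after the modification), which is the collapsed-edge situation excluded in the hypothesis of the $\{E_2',E_1\}$ axiom, not the half-solid square configuration, which concerns $F_1'$ and $F_2'$ both being defined at a different vertex. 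None of this makes your strategy unworkable, but as written the plan leaves the central step --- locating the changed $3'$ in the top row, or else controlling the perturbed walk --- unproved, whereas the paper's standardization argument dispatches it in two lines.
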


We first state an alternate description for $E'_2$ that follows immediately from Proposition \ref{prop:explicit-definitions-primed}.

\begin{proposition}\label{prop:simpler-E2prime}
  To compute $E'_2(w)$, let $x$ be the last $3'$ in $w$ (including the first $3$).  If there is only one $2$ in $w$, say $y=2$, and $x$ is to the left of $y$, then $E'(w)$ is obtained by changing $x$ to $2$ and $y$ to $2'$.  If simply changing $x$ to $2$ does not affect $\mathrm{std}(w)$, then $E'(w)$ is formed by making that change and canonicalizing.  Otherwise $E'(w)=\varnothing$.
\end{proposition}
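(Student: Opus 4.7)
The plan is a direct case analysis starting from Proposition \ref{prop:explicit-definitions-primed} specialized to $i=2$. That proposition prescribes: find any representative of $w$ in which the last $3'$ is strictly right of the last $2$, and change that $3'$ to a $2$; otherwise $E_2'(w)=\varnothing$. The task is to re-express this as the three cases of the new statement.

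First I would characterize the candidate positions. The representatives of a canonical $w$ arise from two independent priming choices (for the first $2/2'$ and the first $3/3'$). Consequently the ``last $3'$'' over all representatives is either the actual last $3'$ of the canonical form or, when no $3'$ appears, the position of the first $3$ (which can be primed in an alternate representative); this is exactly the position $x$ in the statement. Likewise the ``last $2$'' is identical across representatives \emph{unless} the canonical form contains only one letter from $\{2,2'\}$, in which case priming that letter removes the last $2$ altogether. Identifying these possibilities already matches the three-way split in the proposition: either we need to prime the unique $2$ (Case A), or we do not (Case B), or no representative satisfies the hypothesis (Case C).

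I would then verify each case matches the prescription of Proposition \ref{prop:explicit-definitions-primed}. In Case A the working representative has $y$ primed to $2'$ and position $x$ changed from $3'$ to $2$; canonicalizing yields exactly ``change $x$ to $2$ and $y$ to $2'$.'' In Case B no reprining of any $2$ is needed, so we simply change $x$ to $2$ and canonicalize. The key computation here is verifying the equivalence ``simply changing $x$ to $2$ does not affect $\std(w)$'' $\iff$ ``there is a representative in which the last $3'$ lies right of the last $2$ without needing to prime a $2$.'' This is a short label-counting check: changing position $x$ from $3'$ (or from the first $3$) to $2$ shifts the standardization labels of the other letters of classes $2$ and $3'$, and the net shift is zero exactly when no $2$ and no $3'$ lies strictly right of $x$, which is precisely the Case B hypothesis. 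When Cases A and B both fail, no representative can have last $3'$ right of last $2$, giving Case C.

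The only real subtlety is ensuring that the two flavors of $x$ (a genuine $3'$ of the canonical form, versus the first $3$ of a representative obtained by priming) are handled uniformly in the standardization check. I expect that once the $\std$-preservation arithmetic is written down once, both flavors fall out by the same inequality, and the cases of the proposition line up one-to-one with the outcomes of Proposition \ref{prop:explicit-definitions-primed}.
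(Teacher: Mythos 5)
Your overall route is the intended one: the paper gives no separate argument for this proposition (it is asserted to follow immediately from Proposition \ref{prop:explicit-definitions-primed}), and analyzing representatives of $w$ and matching the outcomes to the three stated cases is exactly how that derivation goes. Your identification of the candidate position $x$ (last genuine $3'$, else the first $3$, since the first $3$ is leftmost among value-$3$ letters) and your standardization count for the second case are fine.

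The gap is in your claim that ``the last $2$ is identical across representatives unless the canonical form contains only one letter from $\{2,2'\}$,'' and in the resulting identification of your Case A with the proposition's first case. Since in canonical form every $2'$ lies strictly to the right of the first value-$2$ letter, priming that first letter destroys the last unprimed $2$ whenever it is the \emph{only} unprimed $2$, no matter how many $2'$s follow it; the correct dichotomy is on the number of unprimed $2$s, not on the number of letters of value $2$. The proposition's hypothesis ``only one $2$ in $w$'' must be read this way, and your narrower reading makes the case analysis incomplete. Concretely, take $w = 3\,2\,2'$ (the reading word of a shifted skew tableau of shape $(3,2,1)/(2,1)$ with column entries $2',2,3$). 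Here $\std(w)=(3,2,1)$, there is exactly one unprimed $2$ but two letters from $\{2,2'\}$, and $x$ (the first $3$) lies left of that $2$. By Proposition \ref{prop:explicit-definitions-primed} one primes the unique unprimed $2$ and changes $x$, giving $E_2'(w)=2\,2'\,2'$, which indeed has weight $\wt(w)+\alpha_2$ and standardization $(3,2,1)$ — this is precisely the proposition's first case. Your trichotomy would instead send this word to your Case B, find that changing $x$ alone alters $\std(w)$, and wrongly conclude $E_2'(w)=\varnothing$. The fix is local: redo the representative analysis tracking unprimed $2$s only, i.e., Case A occurs exactly when $w$ has a unique unprimed $2$ and $x$ lies to its left, and then priming it together with changing $x$ gives the stated answer.
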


We now prove Proposition \ref{prop:E1E2prime}.

\begin{proof}
   Let $T$ be a straight shape semistandard shifted tableau for which $E_2'(T)$ and $E_1(T)$ are defined. Let $x$ be the last $3'$ in reading order in $T$ and $y$ the first $2$.
   
   First suppose $x$ precedes $y$ in reading order and $y$ is the only $2$ in $T$.  
Then $y$ is in the first row (since $E_1$ is defined) and $T$ has at most two rows. By semistandardness $x$ must be in the second row, so $T$ is special.

Now suppose $y$ precedes $x$ in reading order. Since $E'_2(T)$ is defined, it follows from Proposition \ref{prop:simpler-E2prime} that changing $x$ from $3'$ to $2$ does not affect the standardization of $T$, and $E'_2(T)$ is formed by making this change. On the other hand, $E_1(T)$ is formed by changing the first $2$ or $2'2$ in the top row of $T$ to $1$ or $12'$ respectively.  Thus there is a $2$ in the top row of $T$, and so if $x$ were in a lower row changing it to $2$ would affect the standardization.  So $x$ is also in the top row, and the operations commute.
 \end{proof}

\subsection{Axiom \ref{ax:f1f2prime-square}: Square for $\{f_1,f_2'\}$}

\begin{proposition} 
  Suppose both $F_1(T)$ and $F_2'(T)$ are defined.  Then $$F_1(F_2'(T))=F_2'(F_1(T))\neq \varnothing$$ if and only if $\vepshat_1(T)>0$.
\end{proposition}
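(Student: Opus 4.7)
The plan is to prove this by direct structural analysis. Since both $F_1$ and $F_2'$ are coplactic and act only on entries in $\{1',1,2',2,3',3\}$, we may reduce to the case that $T$ is a shifted straight-shape tableau on this alphabet. In canonical form such a $T$ has at most three rows, determined by the multiplicities $a,b,c,d,e,f$ of unprimed entries together with indicators $\epsilon_1,\epsilon_2,\epsilon_3\in\{0,1\}$ recording the possible $2'$ in row 1 and the possible $3'$s in rows 1 and 2.

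The first step is to write out each operator explicitly in these parameters. For $F_2'$, I would apply the analogue of Proposition \ref{prop:explicit-definitions-primed}: it changes the rightmost $2$ in reading order (in a representative where this $2$ lies to the right of the last $3'$) into a $3'$, and I would identify exactly which cell this is in terms of the parameters. For $F_1$, I would use the critical-substring rules of Definition \ref{def:F}: since the canonical representative of $T$ contains no $1'$, only types 1F and 3F can apply in the canonical representative (while type 4F in a primed representative yields the same result by well-definedness of $F_1$). Finally, I would express the condition $\widehat{\varepsilon}_1(T)>0$, which by Axiom \ref{ax:basic1} is equivalent to $E_1(T)\neq\varnothing$, as an explicit constraint on the $1,2$-lattice walk of $T$ by unpacking the $E_1$-critical-substring rules in Figure \ref{fig:criticals}.

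With these three pieces in place I would case-analyze the configurations in which both $F_1$ and $F_2'$ are defined. Generically, $F_1$ acts on the interface between the $1$-block and the $2'$ or $2$ of row 1, while $F_2'$ acts on the rightmost $2$ of $T$, which sits either in row 2 or well to the right in row 1. Whenever the altered cells are disjoint, the two compositions are manifestly defined and equal, and $\widehat{\varepsilon}_1(T)>0$ will hold. The delicate scenario is type 1F of $F_1$, which promotes a $2'$ of row 1 to a new $2$: here one must verify whether this new $2$ becomes the target of $F_2'$ or not, and symmetrically whether $F_2'$ preserves the $F_1$-critical substring. I expect the square to close precisely when $T$ already contains another $2$ (for instance in row 2) that serves as the stable target of $F_2'$ both before and after $F_1$, and this is the structural condition that the $E_1$-walk analysis identifies with $\widehat{\varepsilon}_1(T)>0$.

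The main obstacle will be the careful bookkeeping in the type 1F case, where $F_1$ can shift what counts as the last $2$ in reading order. Once the position of the $F_2'$-target is tracked correctly before and after $F_1$ is applied (and symmetrically for the $F_1$-critical substring of $F_2'(T)$), both directions of the biconditional follow by direct comparison of the two compositions in each configuration.
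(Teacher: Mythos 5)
Your plan follows the same basic route as the paper's proof (reduce to a straight shape on $\{1',1,2',2,3',3\}$ and track which cells each operator modifies), but it rests on two specific claims that are false, and they fail exactly in the case that settles the ``only if'' direction. The claim that ``whenever the altered cells are disjoint, the two compositions are manifestly defined and equal, and $\vepshat_1(T)>0$ will hold'' is wrong: take $T$ with rows $(1111,\,22)$. Then $F_1$ acts by a type 3F move on the last $1$ of the top row, and $F_2'$ acts on the rightmost $2$ of the second row, so the altered cells are disjoint; nevertheless $F_2'(F_1(T))$ has rows $(1113,\,22)$ while $F_1(F_2'(T))$ has rows $(1112,\,23)$, and here $\vepshat_1(T)=0$. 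The mechanism you are missing is that $F_2'$ targets the \emph{last} $2$ in reading order, rows being read \emph{bottom to top}; $F_1$ always creates a new $2$ (or $2'2$) in the top row, and if the top row previously contained no $2$, that new $2$ becomes the last $2$ in reading order, so $F_2'$ retargets from the second row to the first even though no cell is written twice.

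For the same reason your expected characterization --- that the square closes when $T$ already contains another $2$, ``for instance in row 2,'' serving as a stable target of $F_2'$ --- is backwards: a $2$ in the second row is read \emph{before} everything in the top row and ceases to be the last $2$ as soon as $F_1$ acts, so it can never be a stable target. The correct condition is that the \emph{top} row of $T$ contains a $2$; the rightmost such $2$ sits to the right of wherever $F_1$ inserts its new $2$, hence remains the last $2$ in reading order, and both compositions then modify only two fixed, distinct cells of the top row. That condition is exactly $\vepshat_1(T)>0$, which is how the paper's proof proceeds. (A smaller point: the ``delicate scenario'' is not confined to type 1F; a type 3F move likewise creates a new $2$ in the top row, as in the example above.) Once these points are corrected, the elaborate parameterization is unnecessary and your case analysis collapses to the paper's short argument.
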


\begin{proof}
  Assume $\vepshat_1(T)>0$.  Then there is at least one $2$ in the top row of $T$.  Since $F_2'(T)$ is defined, there is no $3'$ in the top row, and the last $2$ changes to $3'$ in the top row to form $F_2'(T)$.  Then applying $F_1$ changes the last $1$ to $2$ or the last $12'$ to $2'2$ in the top row.  The composition $F_2'(F_1(T))$ simply applies these operations to the top row in the other order, and the square commutes, as shown in the example below.
  
\begin{center}
\begin{tikzpicture}[x=0.6cm,y=0.6cm]
  \node (top) at (0,0) {\scriptsize $\begin{ytableau} 1 & 1 & 1 & 1 & 1 & 2 & 3 \\ \none & 2 & 2 & 2 & 3 & 3 \\ \none & \none & 3 & 3 \end{ytableau}$};
  \node (left) at (-4,-4) {\scriptsize $\begin{ytableau} 1 & 1 & 1 & 1 & 2 & 2 & 3 \\ \none & 2 & 2 & 2 & 3 & 3 \\ \none & \none & 3 & 3 \end{ytableau}$};
  \node (right) at (4,-4) {\scriptsize $\begin{ytableau} 1 & 1 & 1 & 1 & 1 & 3' & 3 \\ \none & 2 & 2 & 2 & 3 & 3 \\ \none & \none & 3 & 3 \end{ytableau}$}; 
  \node (bot) at (0,-8) {\scriptsize $\begin{ytableau} 1 & 1 & 1 & 1 & 2 & 3' & 3 \\ \none & 2 & 2 & 2 & 3 & 3 \\ \none & \none & 3 & 3 \end{ytableau}$};
  
  \draw [red, ->, thick] (top)--(left);
  \draw [blue, dashed, ->, thick] (top)--(right);
  \draw [red, ->, thick] (right)--(bot);
  \draw [blue, dashed, ->, thick] (left)--(bot);
\end{tikzpicture}
\end{center}
  For the other direction, assume $\vepshat_1(T)=0$.  Then there is no $2$ in the top row. Since $F_2'(T)$ is defined, there is a $2$ or $2'$ elsewhere in the tableau, so $T$ has at least two rows. While $F_2'(F_1(T))$ is formed using only changes in the top row as before, the composition $F_1(F_2'(T))$ starts by changing a $2$ in the second row to a $3'$. Thus the compositions are not equal.
\end{proof}


\subsection{Axiom \ref{ax:half-primed-square}: Half-primed square}

\begin{proposition}\label{prop:tableau-half-primed-square}
Suppose both $F_1(T)$ and $F_2(T)$ are defined, and $F_1'(T)$ is not defined.  Then $$F_2'(F_1(T))=F_1'(F_2(T))\neq \varnothing$$ if and only if $\Delta=(1,1)$.
\end{proposition}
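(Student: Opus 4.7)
The strategy is to explicitly characterize the structure of tableaux $T$ satisfying the hypotheses, translate $\Delta = (1,1)$ via Lemma \ref{lem:Delta-tableaux} into a concrete geometric condition, and then verify the square equality in both directions by direct computation using the critical-substring rules.

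First I will pin down the structure of $T$. Since $T$ is a straight-shape, canonical-form tableau on $\{1',1,2',2,3',3\}$, its top row has the form $1^\alpha (2')^\beta 2^\gamma (3')^\delta 3^\epsilon$, its second row (if nonempty) $2^x (3')^y 3^z$, and its third row (if any) $3^w$. By Proposition \ref{prop:explicit-definitions-primed}, the assumption $F_1'(T) = \varnothing$ combined with $F_1(T)$ defined forces $\beta = 1$ and $x \geq 1$: the $2'$ in the top row sits immediately right of the last $1$, and the $2$s in the second row block every representative that would allow the $F_1'$ move. A short analysis of the 1,2-subword $2^x 1^\alpha 2' 2^\gamma$ then shows $F_1(T)$ is defined precisely when $\alpha \geq x+2$, in which case $F_1$ acts as a type 1F transformation, swapping the terminal $1 \cdot 2'$ in the top row to $2' \cdot 2$ and producing top row $1^{\alpha-1} 2' 2^{\gamma+1} (3')^\delta 3^\epsilon$.

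Next I translate $\Delta = (1,1)$ into explicit conditions using Lemma \ref{lem:Delta-tableaux}. The condition $\Delta\veps_1 = 1$ says the final $F_2$-critical substring of $T$ starts in the top row, so $F_2$ affects the $2'$, one of the $2$s of $2^\gamma$, or the $3'$ of the top row (rather than some entry of the second row). The condition $\Delta\veps_2 = 1$ says the endpoint of the $2,3$-walk has the same $x$-coordinate for $T$ and $F_1(T)$. Since $F_1$ inserts one new $2$ into the top row's 2,3 part, the Bounded Error Lemma (Proposition \ref{prop:length}) guarantees that this shifts the endpoint of the walk by one step either south or east; $\Delta\veps_2 = 1$ is precisely the southbound case, which happens iff the insertion point of the walk is already in the interior, i.e., the second or third row contributes enough to lift the walk off the $y=0$ axis before the $2^\gamma$ block of the top row is reached.

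For the forward direction, assuming $\Delta = (1,1)$, the above structural conditions pin $T$ down closely enough that $F_2$ acts only on a specific top-row entry (via a type 4F, 1F, or 2F substring, depending on the nonzero pattern among $\gamma, \delta, y, z, w$) while $F_1$ acts on the terminal $1 \cdot 2'$ of the top row. These two operations affect disjoint positions of the top row, so the compositions $F_2'(F_1(T))$ and $F_1'(F_2(T))$ both land on the single tableau in which the last $1$ becomes a $2'$ and the chosen right-hand top-row entry becomes primed, which is exactly the required half-primed square. For the converse, I will show that in each of the three other $\Delta$-configurations $(0,0)$, $(1,0)$, $(0,1)$, either $F_2$ modifies a second-row entry -- leaving the blocking $2'$ in the top row intact so that $F_1'(F_2(T))$ remains undefined for the same reason as at $T$ -- or $F_1$ introduces an eastward shift in the 2,3-walk that forces $F_2'(F_1(T))$ to act at a different position from $F_1'(F_2(T))$, so the two sides disagree.

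The main obstacle is the case analysis in the third paragraph: depending on which of $\gamma, \delta, y, z, w$ are zero, the identification of the final $F_2$-critical substring and the resulting explicit form of $F_2(T)$ takes several sub-cases, and the commutation has to be verified separately in each. However, the structural conditions from the second paragraph are strong enough that the sub-case enumeration is short, and each individual verification reduces to a direct computation with the lattice walk and the critical-substring tables in Figure \ref{fig:criticals}.
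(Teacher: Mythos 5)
Your forward direction follows the same strategy as the paper (translate $\Delta=(1,1)$ via Lemma \ref{lem:Delta-tableaux}, characterize $T$, compute both compositions), but it omits the step that makes the computation possible: under $\Delta\veps_2=1$ the final $F_2$-critical substring cannot be of type 1F or 3F, since then the last top-row $2$ would sit on the $x$-axis of the $2,3$-walk and everything after it in reading order is a top-row $3$ or $3'$, so inserting a $2$ would push the endpoint east, contradicting $\Delta\veps_2=1$; and type 2F starting in the top row is impossible by semistandardness. Only type 4F survives, which forces the top row to be $1^k2'3^j$ --- in particular your $\gamma$ and $\delta$ are zero, so the three-way case split you defer to never actually arises. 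Your substitute argument, that $F_1$ and $F_2$ ``affect disjoint positions,'' is not sound as stated: it treats $F_2'$ as if it were localized by a critical substring, whereas by Proposition \ref{prop:explicit-definitions-primed} it always acts on the last $2$ in reading order, and the identity of that $2$ changes once $F_1$ inserts a new $2$ next to the $2'$. (Minor: $F_1(T)$ is already defined when $\alpha=x+1$, via a type 1F substring at location $(1,1)$, not only when $\alpha\geq x+2$.)

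The converse is where the genuine gap lies. Your dichotomy --- either $F_2$ acts in the second row so ``the blocking $2'$ remains intact,'' or an eastward shift makes the two sides act at different positions --- fails in the type 2F case: there the final $F_2$-critical substring $2(3)^{*}2'$ starts at the last $2$ of the second row and ends at the top-row $2'$, and the transformation $2(3)^{*}2'\to 3'(3)^{*}2$ converts that $2'$ into an unprimed $2$, precisely removing the obstruction to $F_1'$. (A similar failure occurs when $F_2$ consumes the only unprimed $2$ below the top row, after which the top-row $2'$ is unprimed in canonical form.) So $F_1'(F_2(T))$ can be defined when $\Delta\neq(1,1)$, and you would then have to compute both sides explicitly to show they differ; the ``different position'' argument does not do this, again because $F_2'$ is not governed by the lattice walk. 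The paper avoids the whole case analysis: if the half-primed square closes at a bottom vertex $R$, then $R$ carries the dual half-solid-square configuration, so Proposition \ref{prop:tableau-half-solid-square} forces $\Delta'=(0,0)$ at $R$, and tracing the string lengths back up the square yields $\Delta=(1,1)$ at $T$. I recommend adopting that duality argument for the converse rather than patching the case analysis.
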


\begin{proof}
  Assume $\Delta=(1,1)$.  
 By Lemma \ref{lem:Delta-tableaux}, the final $F_2$-critical substring of $T$ starts in the top row (and is therefore not type 2F by semistandardness), and inserting a $2$ in the top row does not change the $x$-coordinate of the endpoint of the $2,3$-walk of $T$.
 
  Assume for contradiction that the final $F_2$-critical substring is type 1F or 3F.  Then the last $2$ in the top row occurs on the $x$-axis in the $2,3$-walk.  Since the remaining entries after this point in the walk are $3'$ or $3$ (from the top row of $T$), inserting a $2$ moves the $x$-coordinate of the endpoint to the right, a contradiction.
  
  Thus the final $F_2$-critical substring is type 4F, and so there is a $2'$ in the top row not followed by a $2$ or $3'$ (for otherwise there would be a type 5F critical substring later in the word).  So the top row is of the form $1^k 2' 3^j$ for some $k$ and $j$.  Applying $F_1$ transforms this row into $1^{k-1}2'23^j$, and then applying $F_2'$ yields $1^{k-1}2'3'3^j$.  If instead we apply $F_2$ we obtain $1^k3'3^j$ in the top row, which then maps via $F_1'$ to $1^{k-1}2'3'3^j$, as desired.
  
  For the converse, suppose $F_1'(F_2(T))=F_2'(F_1(T))=R$ for some tableau $R$.  Then since this square is not simultaneously a primed square by our assumption that $F_1'(T)=\varnothing$, the tableau $R$ has the dual half-solid square property and so $\Delta'$ at $R$ is $(0,0)$ by Proposition \ref{prop:tableau-half-solid-square}. Tracing these differences in string lengths back around the square, we see that $\Delta$ at $T$ is $(1,1)$ as well.
\end{proof}

\subsection{Axiom \ref{ax:square}: Square}

\begin{proposition}\label{prop:tableau-square}
Suppose both $F_1(T)$ and $F_2(T)$ are defined but $F_1'(T)$ is not defined.  Then $$F_2(F_1(T))=F_1(F_2(T))\neq \varnothing$$ if and only if $\Delta=(0,1)$ or $\Delta=(1,0)$.
\end{proposition}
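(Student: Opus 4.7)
The plan is to invoke Lemma~\ref{lem:Delta-tableaux} to reinterpret $\Delta$ combinatorially: $\Delta\veps_1 = 0$ iff the final $F_2$-critical substring of $T$ lies in the second row, and $\Delta\veps_2 = 0$ iff inserting a $2$ into the top row of $T$ shifts the endpoint of the $2,3$-walk vertically rather than horizontally. The proof then proceeds by case analysis on the four possible values of $\Delta$.

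For the forward direction with $\Delta = (0,1)$, the operator $F_2$ modifies a critical substring in the second row while $F_1$ modifies only top-row letters. I will verify that neither operator disturbs the data used to compute the other: the final $F_2$-critical substring in the second row remains the final one after applying $F_1$ (using that the insertion of a new $2$ or $2$-to-$3'$ change in the top row shifts the $x$-coordinate of the walk endpoint horizontally, so no new $F_2$-critical substring appears later in reading order), and the final $F_1$-critical substring in the top row is unaffected by changes to the second row. For $\Delta = (1,0)$, both operators act on the top row. Here the type of the final $F_2$-critical substring is restricted: type 2F cannot occur, by semistandardness of a straight-shape tableau (it would require a $1'$ below a $1$), leaving only types 1F, 3F, and 4F. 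In each case the top row takes a specific explicit form, and I will verify commutativity directly from the transformation rules in Figure~\ref{fig:criticals}.

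For the converse, I must rule out $\Delta = (1,1)$ and $\Delta = (0,0)$. The case $\Delta = (1,1)$ is handled using Proposition~\ref{prop:tableau-half-primed-square} (the half-primed square just established): that proposition gives $F_2'(F_1(T)) = F_1'(F_2(T))$, and if additionally $F_2(F_1(T)) = F_1(F_2(T))$ held, then both the $2/2'$-string through $F_1(T)$ and the $1/1'$-string through $F_2(T)$ would have to be collapsed by Axiom~\ref{ax:basic1}. The latter forces $\wt_1(F_2(T)) = \wt_1(T) = 0$, contradicting the hypothesis that $F_1(T)$ is defined. For $\Delta = (0,0)$, I will argue directly from the lattice-walk description: the conditions that the final $F_2$-critical substring lies in the second row and that the $2,3$-walk endpoint shifts vertically under insertion of a $2$ in the top row jointly constrain the top row to contain an unpaired $2'$ or a related configuration, and I will then trace through the critical-substring tables to show that applying $F_1$ first promotes a critical substring in the top row to become the final one, so $F_2(F_1(T))$ alters a different entry than $F_1(F_2(T))$.

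The main obstacle will be the converse case $\Delta = (0,0)$. Because the octagon axioms \ref{ax:half-primed-stembridge} and \ref{ax:stembridge} have not yet been verified for tableaux at this point in the paper, the non-commutativity must be established through a direct combinatorial argument on the lattice walk and critical substrings, rather than by invoking those higher-level relations.
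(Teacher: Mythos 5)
Your overall strategy (case analysis on $\Delta$ via Lemma \ref{lem:Delta-tableaux} plus direct lattice-walk arguments, avoiding the not-yet-established octagon axioms) is the same as the paper's, but your dictionary for $\Delta\veps_2$ is inverted, and this is fatal rather than cosmetic. By Axiom \ref{ax:length} applied to the edge $T \xrightarrow{1} F_1(T)$, the case $\veps_2(F_1(T)) = \veps_2(T)$ is the case $\varphi_2(F_1(T)) = \varphi_2(T)+1$; since $\varphi_2$ is the $x$-coordinate of the endpoint of the $2,3$-walk, $\Delta\veps_2 = 0$ means the endpoint shifts \emph{horizontally} (the $x$-coordinate changes), not vertically as you state. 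Consequently, in your $\Delta=(0,1)$ analysis you assume the inserted $2$ shifts the endpoint horizontally (it does not), and in your $\Delta=(0,0)$ analysis you assume it does not (it does). Since the horizontal shift is precisely what creates a new, later type 1F/3F critical substring in the top row after applying $F_1$ — the mechanism behind non-commutativity in the $(0,0)$ case — your two cases are analyzing each other's tableaux, and the argument as planned would attempt to prove commutativity exactly where it fails. Relatedly, your claim that "no new $F_2$-critical substring appears later" because the shift is horizontal has the implication backwards as well.

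The exclusion of $\Delta=(1,1)$ also does not go through. From Proposition \ref{prop:tableau-half-primed-square} you get $F_2'(F_1(T)) = F_1'(F_2(T)) \ne \varnothing$, but the additional assumption $F_2(F_1(T)) = F_1(F_2(T))$ does not force either string to be collapsed: Axiom \ref{ax:basic1} permits a vertex of a separated string to have both a primed and an unprimed outgoing edge with distinct targets, and two distinct vertices of equal weight may each receive edges from both $F_1(T)$ and $F_2(T)$ without contradiction. Moreover, even if the $1/1'$-string through $F_2(T)$ were collapsed, that would not give $\wt_1(F_2(T))=0$ — only the \emph{bottom} of a collapsed string has vanishing $\wt_1$. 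The paper instead shows directly that in the $(1,1)$ configuration the top row has the form $1^k 2' 3^j$ with the $2'$ a type 4F critical substring at $x=0$, $y \geq 1$, so that the $2$ inserted by $F_1$ becomes a type 5F substring and $F_2(F_1(T)) = \varnothing$. Two smaller points: in your $\Delta=(1,0)$ case, type 4F must be \emph{ruled out} (it forces $\Delta\veps_2=1$), not verified to commute; and the final $F_2$-critical substring of types 1F and 2F can start in the second row and end in the top row, so the claim that $F_1$ and $F_2$ act on disjoint rows in the $(0,1)$ case needs the more careful treatment the paper gives for the spanning 2F substring.
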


\begin{proof}
  First suppose $\Delta=(1,0)$. By Lemma \ref{lem:Delta-tableaux}, the final $F_2$-critical substring of $T$ starts in the top row (and is therefore not type 2F by semistandardness), and inserting a $2$ in the top row (i.e. applying $F_1$) increases the $x$-coordinate of the endpoint of the $2,3$-walk of $T$ by 1.
%
%
  Note that since $F_1'(T)=\varnothing$, there is a $2'$ in the top row.
  
  
  Assume for contradiction that the $F_2$-critical substring is the $2'$ in the top row of type 4F, and is not simultaneously type 1F or 3F (which can occur if $T$ has one row).  If there are any $2$ entries after it then since there is no type 1F or 3F substring, the last such $2$ does not reach the $x$-axis, and so it is a type 5F critical substring, a contradiction.  Similarly it is not followed by $3'$.  Thus the top row is of the form $2'3^\ast$, and inserting a $2$ does not change the ending $x$-coordinate, a contradiction. 
  
  It follows that the final $F_2$-critical substring is type 1F or 3F in the top row, and so applying $F_1$, which simply inserts a $2$ into the $2,3$-walk just before the critical substring, does not change the location of the $F_2$-critical substring in the tableau.  Thus $F_1(F_2(T))=F_2(F_1(T))$ as desired.
  
  Now suppose $\Delta=(0,1)$.  By Lemma \ref{lem:Delta-tableaux}, the final $F_2$-critical substring starts in the second row, and the $x$-coordinate of the $2,3$-walk does not change after inserting a $2$ into the top row.
  
  Assume for contradiction that the final $F_2$-critical substring is type 1F and ends in the top row.  Then it is of the form $22'3'$ where the $2$ is the last entry in the second row, the $2'$ is in the top row, and the $3'$ is just after it in the top row.  Note that applying $F_1$ has the effect of inserting a $2$ to make this sequence $22'23'$ instead, and since the $2,3$-walk must be on the $x$-axis just before the new $2$ is inserted, a rightwards arrow is inserted, which moves the entire $3'3^\ast$ string after it one step to the right.  Thus the $x$-coordinate does change, a contradiction.
  
  Next suppose the final $F_2$-critical substring is type 2F, of the form $23^\ast 2'$ where the $2'$ is in the top row and the $23^\ast$ is in the second.  Note that the walk just after the $2'$ has $x$-coordinate $x=2$.  If at this point we also have $y=0$, then inserting a $2$ would shift the end of the walk to the right, so we have $y\ge 1$.  Furthermore, by the same reasoning, we have $y\ge 1$ just after the last $2$ in $T$, and if $y=1$ just after this last $2$ then it is not followed by a $3'$.  Thus inserting the $2$ does not create a later type 1F or 3F critical substring, and since we start at $x=2$ after the 2F it does not create a later 5F, 2F, or 4F.  Thus $F_1(T)$ also has a split type 2F final $F_2$-critical substring of the same form, and it is now clear that $F_2(F_1(T))=F_1(F_2(T))$ in this case.
  
  Finally, suppose the final $F_2$-critical substring is type 1F or 3F, ending in the second row.  Then the second row ends with $x\ge 1$ in the $2,3$-walk, and so after the $2'$ that appears in the top row the walk has $x\ge 2$.  The same argument as above now applies as well to show $F_2(F_1(T))=F_1(F_2(T))$ in this case.
  
  For the converse, suppose $F_2(F_1(T))=F_1(F_2(T))$ (and $F_1'(T)=\varnothing$).  If $\Delta=(1,1)$, the proof of Proposition \ref{prop:tableau-half-primed-square} shows that the top row of $T$ has the form $1^j2'3^k$ where the $2'$ is the final $F_2$-critical substring of type 4F.  Since $F_1'(T)=\varnothing$ this $2'$ is not simultaneously a $2$ and starts at $y\ge 1$ in the walk (and $x=0$ since it is type 4F).  But then $F_1(T)$ has a type 5F $F_2$-critical substring at the new $2$ that is inserted into the walk, a contradiction.
  
  Finally, assume for contradiction that $\Delta=(0,0)$.  By Lemma \ref{lem:Delta-tableaux}, the final $F_2$-critical substring starts in the second row and the $x$-coordinate of the walk changes upon inserting a $2$ in the top row.
  
  If the final $F_2$-critical substring is type 1F ending in the top row, then it is of the form $22'3'$ where the $2$ is the last entry in the second row, the $2'$ is in the top row, and the $3'$ is just after it in the top row.  Note that applying $F_1$ has the effect of inserting a $2$ to make this sequence $22'23'$ instead, which does move the endpoint of the walk to the right, and makes the new later $23'$ the final type 1F critical substring in the top row.  So $F_2(F_1(T))$ has the same second and third row as $T$, but $F_1(F_2(T))$ does not since the 1F critical substring starts in the second row.  Thus $F_2(F_1(T))\neq F_1(F_2(T))$ in this case.
  
  If the final $F_2$-critical substring is type 2F (which must end in the top row), then by the reasoning above in the $\Delta=(0,1)$ case, the position of the walk after the last $2$ in $T$ must be at $x=2$ and $y=0$, or at $x=2$ and $y=1$ with $3'$ as the next entry.  But then applying $F_1$ results in a later 1F or 3F $F_2$-critical substring in the top row due to the inserted $2$, and so $F_2(F_1(T))$ does not change the second row while $F_1(F_2(T))$ does.  Thus again they are not equal.  A similar argument applies if the final $F_2$-critical substring is type 1F or 3F contained in the second row.
  
  It follows that unless $\Delta=(0,1)$ or $(1,0)$, we have $F_1(F_2(T))\neq F_2(F_1(T))$.
\end{proof}

\subsection{Axiom \ref{ax:half-primed-stembridge}: Half-primed Octagon}

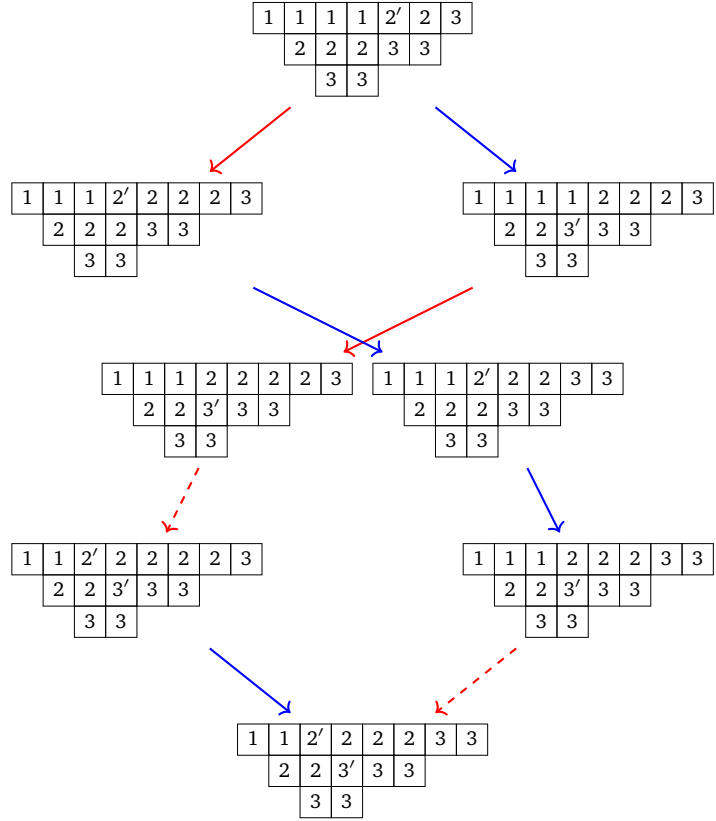
\begin{figure}[t]
 \begin{center}
\begin{tikzpicture}[x=0.6cm,y=0.6cm]
  \node (top) at (0,0) {\scriptsize $\begin{ytableau}  1 & 1 & 1 & 1 & 2' & 2 & 3 \\ \none & 2 & 2 & 2 & 3 & 3 \\ \none & \none & 3 & 3\end{ytableau}$};
  \node (left) at (-5,-4) {\scriptsize $\begin{ytableau}  1 & 1 & 1 & 2' & 2 & 2 & 2 & 3 \\ \none & 2 & 2 & 2 & 3 & 3 \\ \none & \none & 3 & 3\end{ytableau}$};
  \node (right) at (5,-4) {\scriptsize $\begin{ytableau} 1 & 1 & 1 & 1 & 2 & 2 & 2 & 3 \\ \none & 2 & 2 & 3' & 3 & 3 \\ \none & \none & 3 & 3\end{ytableau}$}; 
  \node (mid1) at (-3,-8) {\scriptsize $\begin{ytableau} 1 & 1 & 1 & 2 & 2 & 2 & 2 & 3 \\ \none & 2 & 2 & 3' & 3 & 3 \\ \none & \none & 3 & 3\end{ytableau}$}; 
  \node (mid2) at (3,-8) {\scriptsize $\begin{ytableau} 1 & 1 & 1 & 2' & 2 & 2 & 3 & 3 \\ \none & 2 & 2 & 2 & 3 & 3 \\ \none & \none & 3 & 3\end{ytableau}$};
  \node (left2) at (-5,-12) {\scriptsize $\begin{ytableau} 1 & 1 & 2' & 2 & 2 & 2 & 2 & 3 \\ \none & 2 & 2 & 3' & 3 & 3 \\ \none & \none & 3 & 3\end{ytableau}$};
  \node (right2) at (5,-12) {\scriptsize $\begin{ytableau} 1 & 1 & 1 & 2 & 2 & 2 & 3 & 3 \\ \none & 2 & 2 & 3' & 3 & 3 \\ \none & \none & 3 & 3\end{ytableau}$};
  \node (bot) at (0,-16) {\scriptsize $\begin{ytableau} 1 & 1 & 2' & 2 & 2 & 2 & 3 & 3 \\ \none & 2 & 2 & 3' & 3 & 3 \\ \none & \none & 3 & 3\end{ytableau}$};
  \draw [red, ->, thick] (top)--(left);
  \draw [blue, ->, thick] (top)--(right);
  \draw [red, ->, thick] (right)--(mid1);
  \draw [blue, ->, thick] (left)--(mid2);
  \draw [red, dashed, ->, thick] (mid1)--(left2);
  \draw [blue, ->, thick] (mid2)--(right2);
  \draw [red, dashed, ->, thick] (right2)--(bot);
  \draw [blue, ->, thick] (left2)--(bot);
  
\end{tikzpicture}
\end{center}
\caption{\label{fig:half-primed-stembridge}A half-primed octagon diagram.}
 \end{figure}

\begin{proposition}\label{prop:tableau-half-primed-stembridge}
  Suppose both $F_1(T)$ and $F_2(T)$ are defined and $F_1'(T)=\varnothing$.  Then 
  $$F_2(F_1'(F_1(F_2(T))))=F_1'(F_2(F_2(F_1(T))))\neq \varnothing \text{ and } F_1(F_2(T))\neq F_2(F_1(T))$$ if and only if $\Delta=(0,0)$ and $\vepshat_1(T)-\vepshat_1(f_2(T))=-1$.
\end{proposition}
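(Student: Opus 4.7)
Plan: I will follow the template of Propositions \ref{prop:tableau-square} and \ref{prop:tableau-half-primed-square}, proceeding by case analysis on the type and position of the final $F_2$-critical substring of $T$. Since the hypothesis includes $F_1F_2(T)\neq F_2F_1(T)$, Proposition \ref{prop:tableau-square} already rules out $\Delta\in\{(0,1),(1,0)\}$, and Proposition \ref{prop:tableau-half-primed-square} together with a short additional check rules out $\Delta=(1,1)$ (where the two paths $F_1F_2$ and $F_2F_1$ instead meet across a primed edge rather than at the bottom of an octagon). Thus both directions of the equivalence reduce to showing that, under the remaining case $\Delta=(0,0)$, the octagon identity holds if and only if $\vepshat_1(T)-\vepshat_1(F_2(T))=-1$.

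Under $\Delta=(0,0)$, Lemma \ref{lem:Delta-tableaux} tells us that the final $F_2$-critical substring of $T$ starts in the second row and that inserting a $2$ into the top row shifts the endpoint of the $2,3$-walk rightward by one. The hypotheses that $F_1(T)$ is defined and $F_1'(T)=\varnothing$ force the top row to contain at least one $1$ and a $2'$. Combining these constraints, I will pin down the shape of the first two rows of $T$ up to a few parameters, splitting into subcases by whether the critical substring has type 1F, 2F, or 3F; type 4F is ruled out because a row-2 critical substring must begin with an unprimed $2$, and type 5F is ruled out because $F_2(T)$ is defined.

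For the forward direction, I will assume the length condition and directly compute both $F_2F_1'F_1F_2(T)$ and $F_1'F_2^2F_1(T)$ in each subcase, checking that both sides are defined at every step and produce the same tableau. The length condition $\vepshat_1(T)-\vepshat_1(F_2(T))=-1$ plays its crucial role by ensuring that after the first $F_2$ the $1$'s and $2'$ in the top row remain in place while the position within the $1/1'$-string advances by exactly one unprimed edge; this is exactly what permits the intermediate $F_1'$ on each path to fire at the cell which makes the two paths close at the same bottom tableau. For the converse, I assume the octagon identity together with non-commuting squares, then trace $\vepshat_1$ (and $\vphihat_1$) around the octagon via Axiom \ref{ax:length} to recover the length condition at $T$.

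The main obstacle is the bookkeeping: each subcase (different critical substring type, different trailing contents of the top row) gives a slightly different sequence of intermediate tableaux, and in each I must verify that the next operator fires at the predicted cell so the diagram actually closes. A worked example along the lines of Figure \ref{fig:half-primed-stembridge} will serve as a guide, with the remaining subcases differing only by the presence or absence of trailing $3$'s or $3'$ in the top row and by the choice of 1F/2F/3F critical substring type.
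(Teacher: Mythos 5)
Your overall architecture matches the paper's: rule out $\Delta\in\{(0,1),(1,0),(1,1)\}$ in the converse via Propositions \ref{prop:tableau-square} and \ref{prop:tableau-half-primed-square} (the ``short additional check'' for $\Delta=(1,1)$ being that after $F_1$ the inserted $2$ in the top row $1^k2'3^j$ creates a type 5F substring, so $F_2(F_1(T))=\varnothing$), and in the forward direction pin down $T$ explicitly from $\Delta=(0,0)$ plus the length condition and verify the octagon by direct computation. However, two steps of your plan would fail as written. First, your converse derivation of $\vepshat_1(T)-\vepshat_1(F_2(T))=-1$ by ``tracing $\vepshat_1$ (and $\vphihat_1$) around the octagon via Axiom \ref{ax:length}'' is not available: Axiom \ref{ax:length} constrains only $\veps_i$ and $\vphi_i$, and explicitly says nothing about the hatted or primed statistics. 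The paper instead observes that the octagon relation makes $F_1'(F_1(F_2(T)))\neq\varnothing$, that $F_1$ and $F_1'$ commute, hence $F_1'(F_2(T))\neq\varnothing$; combined with $F_1'(T)=\varnothing$ this yields the $\vepshat_1$ condition. You would need this (or some other tableau-level argument), not the lengths axiom.

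Second, your stated mechanism for the forward direction is inverted. The condition $\vepshat_1(T)-\vepshat_1(F_2(T))=-1$ together with $\Delta\veps_1=0$ forces $\veps_1'(F_2(T))=0$ while $\veps_1'(T)=1$: the top row of $T$ contains a $2'$ (since $F_1'(T)=\varnothing$) but the top row of $F_2(T)$ does not, so the $2'$ emphatically does \emph{not} ``remain in place'' -- it is the terminal letter of the final $F_2$-critical substring (type 2F, $2\,3^\ast 2'\to 3'3^\ast 2$), or is unprimed by canonicalization in the exceptional type 1F case where the second row is a single $2$. This is precisely what pins $T$ down to the reading words $3^a2^{a+1}3^b1^c2'2^b3^d$ or $3^a2^{a+1}3^b1^c2'2^{b-1}3'3^d$ (resp.\ $21^c2'3^d$) for which the diagram of Figure \ref{fig:half-primed-stembridge} closes; the 1F/3F critical substrings contained entirely in the second row, which your subcase list would retain, belong to the unprimed octagon axiom \ref{ax:stembridge}, not to this one, and keeping them under your ``$2'$ stays put'' reading would lead you to verify the relation on the wrong family of tableaux.
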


\begin{proof}
First assume $\Delta=(0,0)$ and $\vepshat_1(T)-\vepshat_1(F_2(T))=-1$.  By Lemma \ref{lem:Delta-tableaux}, the final $F_2$-critical substring starts in the second row, and the $x$-coordinate of the endpoint of the $2,3$-walk changes after inserting a $2$ in the top row.

Observe that $F_1'$ is defined at $F_2(T)$. If the $1$-string through $F_2(T)$ is separated, this is because
\[\veps_1'(F_2(T))
= \veps_1(F_2(T)) - \vepshat_1(F_2(T))
= \veps_1(T) - (\vepshat_1(T) + 1)
= \veps_1'(T)-1 = 0,\]
and so $\varphi_1'(F_2(T)) = 1$. If the string is collapsed, we instead observe $\varphi_1(F_2(T)) \geq 2$ by Axiom \ref{ax:length} (since $\varphi_1(T) \geq 1$).
%
Thus there is a $2'$ in the top row of $T$ (since $F_1'(T) = \varnothing$), but not $F_2(T)$. 
 
 With one exception (below), the final $F$-critical substring of $T$ therefore has type 2F and begins at the rightmost $2$ on the second row and ends at the $2'$ in the first.  In order for the walk to be at the right position (so that the final $x$-coordinate changes upon inserting a $2$), and for there to be no later $F_2$-critical strings, $T$ must have the form
 $$\begin{ytableau} 1 & 1 & 1 & 1 & 1 & 2' & 2 & 2 & 3 \\ \none & 2 & 2 & 2 & 3 & 3 \\ \none & \none & 3 & 3\end{ytableau}$$
with reading word $3^a2^{a+1}3^b1^c2'2^b3^d$ or $3^a2^{a+1}3^b1^c2'2^{b-1}3'3^d$, where $a,b$ are not both $0$.
Then we have the resulting operations in Figure \ref{fig:half-primed-stembridge}, and we're done.

In the exceptional case where the critical string has type 1F, the entire second row must consist only of a single $2$ (and the $2'$ in the first row becomes a $2$ in the canonical form of $F_2(T)$). Then $T$ has two rows and reading word $21^c2'3^d$ (in fact the case $a=b=0$ above), and again the desired relation holds with no square (and in fact with all edges collapsed except the top $T \to F_1(T)$ edge and the bottom $F_2$ edge).

For the converse, suppose a half-primed octagon diagram holds and no square.  Since no square holds and $F_1'(T)$ is not defined, we can't have $\Delta=(1,0)$ or $(0,1)$ by Axiom \ref{ax:square} (Proposition \ref{prop:tableau-square}).  Suppose for contradiction that $\Delta=(1,1)$.  Then there is a half-primed square at $T$ by Axiom \ref{ax:half-primed-square} (Proposition \ref{prop:tableau-half-primed-square}), and the top row of $T$ is of the form $1^k2'3^j$ with the $2'$ as the final type 4F critical substring by the proof of Proposition \ref{prop:tableau-half-primed-square}.  But then after applying $F_1$ the top row looks like $1^{k-1}2'23^j$ and the $2$ in this row is now a type 5F critical substring for $F_2$, a contradiction to the half-primed octagon existing, unless the $2'$ was the first entry of the $2,3$ subword.  But the latter only happens if it is a one-row shape in which case $F_1'(T)$ exists, a contradiction.  So we get a contradiction and it follows that $\Delta=(0,0)$.
 
 Now, we know $F_1'(T)$ is undefined, and since $F_1$ and $F_1'$ commute and $F_1'(F_1(F_2(T)))$ is defined, it follows that $F_1'(F_2(T))$ is defined.  Thus the $\vepshat$ condition holds as well, since $F_1'$ is defined at $F_2(T)$ and not at $T$.
\end{proof}

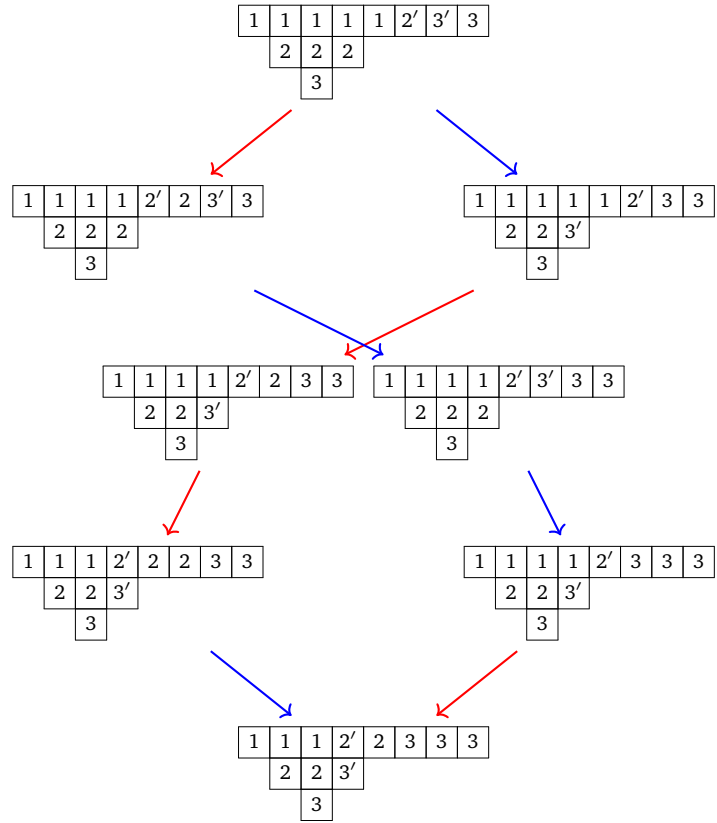
\begin{figure}[h!]
 \begin{center}
\begin{tikzpicture}[x=0.6cm,y=0.6cm]
  \node (top) at (0,0) {\scriptsize $\begin{ytableau}  1 & 1 & 1 & 1 & 1 & 2' & 3' & 3 \\ \none & 2 & 2 & 2 \\ \none & \none & 3 \end{ytableau}$};
  \node (left) at (-5,-4) {\scriptsize $\begin{ytableau}  1 & 1 & 1 & 1 & 2' & 2 & 3' & 3 \\ \none & 2 & 2 & 2 \\ \none & \none & 3 \end{ytableau}$};
  \node (right) at (5,-4) {\scriptsize $\begin{ytableau}  1 & 1 & 1 & 1 & 1 & 2' & 3 & 3 \\ \none & 2 & 2 & 3' \\ \none & \none & 3 \end{ytableau}$}; 
  \node (mid1) at (-3,-8) {\scriptsize $\begin{ytableau}  1 & 1 & 1 & 1 & 2' & 2 & 3 & 3 \\ \none & 2 & 2 & 3' \\ \none & \none & 3 \end{ytableau}$}; 
  \node (mid2) at (3,-8) {\scriptsize $\begin{ytableau}  1 & 1 & 1 & 1 & 2' & 3' & 3 & 3 \\ \none & 2 & 2 & 2 \\ \none & \none & 3 \end{ytableau}$};
  \node (left2) at (-5,-12) {\scriptsize $\begin{ytableau}  1 & 1 & 1 & 2' & 2 & 2 & 3 & 3 \\ \none & 2 & 2 & 3' \\ \none & \none & 3 \end{ytableau}$};
  \node (right2) at (5,-12) {\scriptsize $\begin{ytableau}  1 & 1 & 1 & 1 & 2' & 3 & 3 & 3 \\ \none & 2 & 2 & 3' \\ \none & \none & 3 \end{ytableau}$};
  \node (bot) at (0,-16) {\scriptsize $\begin{ytableau}  1 & 1 & 1 & 2' & 2 & 3 & 3 & 3 \\ \none & 2 & 2 & 3' \\ \none & \none & 3 \end{ytableau}$};
  \draw [red, ->, thick] (top)--(left);
  \draw [blue, ->, thick] (top)--(right);
  \draw [red, ->, thick] (right)--(mid1);
  \draw [blue, ->, thick] (left)--(mid2);
  \draw [red, ->, thick] (mid1)--(left2);
  \draw [blue, ->, thick] (mid2)--(right2);
  \draw [red, ->, thick] (right2)--(bot);
  \draw [blue, ->, thick] (left2)--(bot);
  
\end{tikzpicture}
\end{center}
\caption{\label{fig:Stembridge-1F-split}  An octagon diagram starting with a separated 1F critical $F_2$-substring.}
 \end{figure}

\subsection{Axiom \ref{ax:stembridge}: Octagon}

\begin{figure}[h!]
 \begin{center}
\begin{tikzpicture}[x=0.6cm,y=0.6cm]
  \node (top) at (0,0) {\scriptsize $\begin{ytableau}  1 & 1 & 1 & 1 & 2' & 2 & 3' & 3 \\ \none & 2 & 2 & 2 & 3' & 3 \\ \none & \none & 3 \end{ytableau}$};
  \node (left) at (-5,-4) {\scriptsize $\begin{ytableau}  1 & 1 & 1 & 2' & 2 & 2 & 3' & 3 \\ \none & 2 & 2 & 2 & 3' & 3 \\ \none & \none & 3 \end{ytableau}$};
  \node (right) at (5,-4) {\scriptsize $\begin{ytableau} 1 & 1 & 1 & 1 & 2' & 2 & 3' & 3 \\ \none & 2 & 2 & 3' & 3 & 3 \\ \none & \none & 3 \end{ytableau}$}; 
  \node (mid1) at (-3,-8) {\scriptsize $\begin{ytableau}  1 & 1 & 1 & 2' & 2 & 2 & 3' & 3 \\ \none & 2 & 2 & 3' & 3 & 3 \\ \none & \none & 3 \end{ytableau}$}; 
  \node (mid2) at (3,-8) {\scriptsize $\begin{ytableau} 1 & 1 & 1 & 2' & 2 & 3' & 3 & 3 \\ \none & 2 & 2 & 2 & 3' & 3 \\ \none & \none & 3 \end{ytableau}$};
  \node (left2) at (-5,-12) {\scriptsize $\begin{ytableau}  1 & 1 & 2' & 2 & 2 & 2 & 3' & 3 \\ \none & 2 & 2 & 3' & 3 & 3 \\ \none & \none & 3 \end{ytableau}$};
  \node (right2) at (5,-12) {\scriptsize $\begin{ytableau}  1 & 1 & 1 & 2' & 2 & 3' & 3 & 3 \\ \none & 2 & 2 & 3' & 3 & 3 \\ \none & \none & 3 \end{ytableau}$};
  \node (bot) at (0,-16) {\scriptsize $\begin{ytableau} 1 & 1 & 2' & 2 & 2 & 3' & 3 & 3 \\ \none & 2 & 2 & 3' & 3 & 3 \\ \none & \none & 3 \end{ytableau}$};
  \draw [red, ->, thick] (top)--(left);
  \draw [blue, ->, thick] (top)--(right);
  \draw [red, ->, thick] (right)--(mid1);
  \draw [blue, ->, thick] (left)--(mid2);
  \draw [red, ->, thick] (mid1)--(left2);
  \draw [blue, ->, thick] (mid2)--(right2);
  \draw [red, ->, thick] (right2)--(bot);
  \draw [blue, ->, thick] (left2)--(bot);
  
\end{tikzpicture}
\end{center}
\caption{\label{fig:stembridge-1F-whole}  An octagon diagram starting with a 1F critical $F_2$-substring fully contained in the second row.  The same type of figure results if it were a 3F critical substring instead.}
 \end{figure}
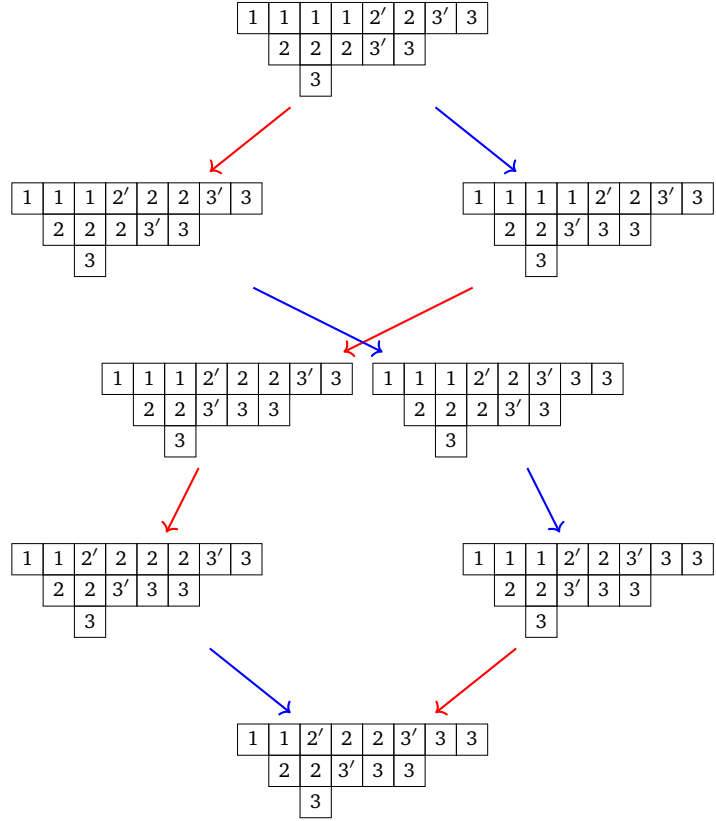

\begin{proposition}
Suppose $F_1(T)$ and $F_2(T)$ are both defined, and $F_1'(T)=\varnothing$.  We have that $$F_2(F_1^2(F_2(T)))=F_1(F_2^2(F_1(T)))\neq \varnothing\hspace{0.3cm}\text{ and }\hspace{0.3cm}F_1(F_2(T))\neq F_2(F_1(T))$$ if and only if $\Delta=(0,0)$ and $\widehat{\varphi}_1(F_2(T))\ge 2$.  
\end{proposition}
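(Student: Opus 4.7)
The plan is to mirror the approach used for the Half-primed Octagon (Proposition \ref{prop:tableau-half-primed-stembridge}), again exploiting Lemma \ref{lem:Delta-tableaux}. For the forward direction, assume $\Delta = (0,0)$ and $\widehat{\varphi}_1(F_2(T)) \geq 2$. By Lemma \ref{lem:Delta-tableaux}, the final $F_2$-critical substring of $T$ starts in the second row, and inserting a $2$ into the top row changes the $x$-coordinate of the endpoint of the $2,3$-walk. Since $F_1'(T) = \varnothing$ while $F_1(T)$ is defined, the $1/1'$-string through $T$ is separated with $\varepsilon_1'(T) = 0$, so the top row of $T$ contains a $2'$. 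The hypothesis $\widehat{\varphi}_1(F_2(T)) \geq 2$ then forces at least two $1$'s to sit to the left of this $2'$ in the top row of $F_2(T)$ (and hence of $T$), since $F_2$ only alters the $2,3$-subword.

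The final $F_2$-critical substring is of type 1F, 2F, or 3F (it cannot be 4F or 5F given that it starts in row 2 and $F_2(T)$ is defined). I would analyze each subcase, pinning down the reading word of $T$ in each. In broad strokes, the top row must look like $1^k 2' (3')^{?} 3^*$ with $k \geq 2$, while the second row ends in a block of $2$'s whose cardinality is controlled by the walk-endpoint condition. In each case, I would then verify the octagon relation
\[
F_2\!\left(F_1^{\,2}(F_2(T))\right) = F_1\!\left(F_2^{\,2}(F_1(T))\right)
\]
by tracing the eight arrows explicitly, as illustrated in Figures \ref{fig:Stembridge-1F-split} and \ref{fig:stembridge-1F-whole}. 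The non-square condition $F_1(F_2(T)) \ne F_2(F_1(T))$ follows because the two initial operations act in different rows (or produce differently-placed critical substrings), so the intermediate tableaux disagree in at least one entry.

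For the converse, assume the octagon relation holds and $F_1(F_2(T)) \ne F_2(F_1(T))$. Since $F_1'(T) = \varnothing$, Axiom \ref{ax:square} (Proposition \ref{prop:tableau-square}) rules out $\Delta = (1,0)$ and $\Delta = (0,1)$, and Axiom \ref{ax:half-primed-square} (Proposition \ref{prop:tableau-half-primed-square}) rules out $\Delta = (1,1)$; therefore $\Delta = (0,0)$. For the length condition, the octagon requires $F_1^{\,2}(F_2(T))$ to be defined, and since Axioms \ref{ax:primed-square} and \ref{ax:f1primef2-square} together imply $f_1'$ does not appear twice along the unique $1/1'$-string, both $F_1$'s at $F_2(T)$ must lie in the unprimed portion, giving $\widehat{\varphi}_1(F_2(T)) \geq 2$.

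The main obstacle will be the forward-direction case analysis: the type 2F case splits across the top two rows, and one must check carefully that the walk-endpoint condition, the presence of a $2'$ in the top row, and the $\widehat{\varphi}_1(F_2(T)) \geq 2$ constraint are mutually consistent and pin down the tableau sufficiently for the octagon diagram to close up. A secondary subtlety is that the conditions of Axioms \ref{ax:half-primed-stembridge} and \ref{ax:stembridge} may hold simultaneously at $T$; this is allowed by Remark following Axiom \ref{ax:stembridge}, but in that overlapping case one must verify that both diagrams coexist consistently with the structure of $T$.
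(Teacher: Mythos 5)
Your overall strategy is the same as the paper's (reduce via Lemma \ref{lem:Delta-tableaux}, split on the type of the final $F_2$-critical substring, verify the octagon explicitly, and prove the converse by eliminating the other values of $\Delta$), but as written there are two genuine gaps. First, the forward direction is only a plan: the entire content of that direction is the case analysis (type 2F; type 1F ending in the top row; type 1F/3F contained in the second row) together with an explicit trace of all eight arrows in each case, and you have not carried this out. Your ``broad strokes'' description of $T$ is moreover not accurate: in the 2F case the top row has the form $1^c\,2'\,2^{b-1}\,m\,3^d$ with $m\in\{2,3'\}$ (reading word $3^a2^{a+1}3^b1^c2'2^{b}3^d$ or $3^a2^{a+1}3^b1^c2'2^{b-1}3'3^d$), so unprimed $2$s can follow the $2'$, which your template $1^k2'(3')^?3^*$ excludes; pinning down exactly this shape is what makes the diagrams of Figures \ref{fig:half-primed-stembridge}, \ref{fig:Stembridge-1F-split} and \ref{fig:stembridge-1F-whole} close up. Your justification of $F_1(F_2(T))\neq F_2(F_1(T))$ (``the two initial operations act in different rows, so the intermediate tableaux disagree'') does not prove anything: of course $F_1(T)\neq F_2(T)$; what must be checked is that the two \emph{double} compositions differ, which in the paper is read off from the explicit diagrams.

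Second, in the converse your elimination of $\Delta=(1,1)$ is not a valid inference. Proposition \ref{prop:tableau-half-primed-square} tells you that $\Delta=(1,1)$ forces a half-primed square $F_2'(F_1(T))=F_1'(F_2(T))\neq\varnothing$, but the existence of that square does not by itself contradict your hypotheses (the octagon relation plus $F_1F_2(T)\neq F_2F_1(T)$); and you cannot appeal to Axiom \ref{ax:stembridge} itself, since its ``only if'' direction is exactly what you are proving. The paper closes this case by a concrete tableau argument (inside the proof of Proposition \ref{prop:tableau-half-primed-stembridge}, to which the octagon converse refers): when $\Delta=(1,1)$ and $F_1'(T)=\varnothing$, the top row has the form $1^k2'3^j$ with the $2'$ a type 4F critical substring located at $x=0$, $y\geq 1$, and after applying $F_1$ the inserted $2$ becomes a type 5F critical substring, so $F_2(F_1(T))=\varnothing$; this contradicts $F_1(F_2^2(F_1(T)))\neq\varnothing$. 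Some argument of this kind is needed; eliminating $\Delta=(1,0),(0,1)$ via Proposition \ref{prop:tableau-square} and deducing $\widehat{\varphi}_1(F_2(T))\geq 2$ from the definedness of $F_1^2(F_2(T))$ are fine as you have them.
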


\begin{proof}
 Let $T$ have the specified properties and assume $\Delta=(0,0)$ and $\widehat{\varphi}_1(F_2(T))\ge 2$.  By Lemma \ref{lem:Delta-tableaux}, the final $F_2$-critical substring starts in the second row, and the $x$-coordinate of the $2,3$-walk changes after applying $F_1$.  As before, this change amounts to inserting a $2$ just before the top row's run of the form $2^\ast [3']3^\ast$, so the walk just before the last $2$ or $3'$ in this run is at $y=1$.  Note also that there is a $2'$ in the top row of $T$ since $F_1'(T)=\varnothing$. 
 
 If the final $F_2$-critical substring is type 2F, the analysis is the same as in Proposition \ref{prop:tableau-half-primed-stembridge} with the exception that there must be enough $1$s in the top row so that $F_1^2(F_2(T))$ is defined, for the $\widehat{\varphi}_1$ condition to hold.  It is easy to see that Figure \ref{fig:half-primed-stembridge} can be modified to an octagon diagram in this case.
 
 If the final $F_2$-critical substring is type 1F, suppose it ends in the top row, at a $3'$, where the second row has only $2$s, the last of which ends on the $x$-axis.  Then the diagram looks like that of Figure \ref{fig:Stembridge-1F-split}, and so the octagon rule is satisfied with no square.
 
 Otherwise, the final $F_2$-critical substring is type 1F or 3F completely contained in the second row.  In this case the diagram in Figure \ref{fig:stembridge-1F-whole} shows the octagon rule is satisfied with no square.
 
 For the converse, assume that $$F_2(F_1^2(F_2(T)))=F_1(F_2^2(F_1(T)))\neq \varnothing\hspace{0.3cm}\text{ and }\hspace{0.3cm}F_1(F_2(T))\neq F_2(F_1(T))$$ (and $F_1'(T)=\varnothing$).  Then the same proof as in Proposition \ref{prop:tableau-half-primed-stembridge} above shows that $\Delta=(0,0)$, and the relation itself indicates $\widehat{\varphi}_1(F_2(T))\ge 2$.  This completes the proof.
\end{proof}

\end{document}